\newtheorem{theorem}{Theorem}
\theoremstyle{plain}
\newtheorem{corollary}{Corollary}
\newtheorem{definition}{Definition}
\newtheorem{example}{Example}
\newtheorem{lemma}{Lemma}
\newtheorem{remark}{Remark}
\numberwithin{equation}{section}
\begin{document}
\title[Embedding]{Embedding planar compacta in planar continua with application: homotopic
maps between planar Peano continua are characterized by the fundamental
group homomorphism. }
\author{Paul Fabel}
\address{Drawer MA Mississippi State, MS 39762}
\date{March 19 2009}
\subjclass{55Q52}

\begin{abstract}
The CAT(0) geometry of a planar PL disk (determined by internal paths of
minimal length) is employed to prove every planar compactum with connected
complement can be an embedded in a cellular planar continuum by attaching a
null sequence of arcs with disjoint interiors.

This leads to a proof that two based maps from a planar Peano continuum to a
planar set are homotopic iff they induce the same homomorphism between
fundamental groups.
\end{abstract}

\maketitle

\section{Introduction}

The capacity to recognize homotopic maps plays a central role in classifying
planar continua up to homotopy equivalence.

The second of two main results (Theorems \ref{main1} and \ref{main4})
establishes if $X\subset R^{2}$ is a locally path connected (i.e. Peano)
continuum, and $Y\subset R^{2}$ is arbitrary, and if $f,g:(X,p)\rightarrow
(Y,q)$ are based maps, then $f$ is homotopic to $g$ if and only if the
induced maps $f_{\ast },g_{\ast }:\pi _{1}(X,p)\rightarrow \pi _{1}(Y,q)$
satisfy $f_{\ast }=g_{\ast }.$

The above statement is generally false for planar continua, due for example
to the existence of cellular noncontractible planar continua. Such examples
also confirm the potential failure of the conclusion of Whitehead's Theorem 
\cite{Wh1} \cite{Wh2} ( maps between $CW$ complexes which induce
isomorphisms on homotopy groups are homotopy equivalences)

For planar Peano continua $X$ and $Y,$ it is an open question whether $%
f:X\rightarrow Y$ is a homotopy equivalence precisely if $f_{\ast }$ induces
an isomorphism between fundamental groups.

In the simplest nontrivial case, (the Hawaiian earring $HE$, the union of a
null sequence of circles joined at a common point), the group $\pi _{1}(HE)$
is uncountable and not free \cite{Smit}, it naturally injects into the
inverse limit of finite free groups, and its elements can be understood as
``infinite words'' in generators $x_{1},x_{2},...$ such that each letter
appears finitely many times \cite{MM}. Remarkably, Eda \cite{Eda1} proved
all homomorphisms of the Hawaiian earring group are (up to a change of base
point isomorphism) induced by maps, and hence the self homotopy equivalences 
$f:HE\rightarrow HE$ are precisely the maps such that $f$ induces an
isomorphism of $\pi _{1}(HE).$

More generally, positive answers are emerging in case $X$ is 1 dimensional 
\cite{Eda2} or homotopy equivalent to a 1 dimensional planar Peano continuum 
\cite{CC2}.

Most generally (e.g. \cite{Z2}) , Theorem \ref{main4} of the paper at hand
immediately yields a partial answer\textbf{:} If $X,Y\subset R^{2}$ are
Peano continua, then a map $f:X\rightarrow Y$ is a homotopy equivalence if $%
f_{\ast }$ is an isomorphism and $f_{\ast }^{-1}$ is induced by a map
(Corollary \ref{White}).

To prove Theorem \ref{main4}, we must confront the fact that if $A\subset
int(D^{2})$ is an \textbf{arbitrary} compactum in the closed unit disk $%
D^{2} $ such that $\dim (A)\leq 1,$ and if $\{U_{n}\}\subset
int(D^{2})\backslash A $ is a null sequence of disjoint round open disks
converging (limit supremum) to $A,$ then $X=D^{2}\backslash \{U_{1}\cup
U_{2}...\}$ is a Peano continuum such that $X\backslash int(X)=Z=A\cup
\partial U_{1}\cup \partial U_{2}...$

Since $A$ is arbitrary, critical to the proof of Theorem \ref{main4}, is our
other main result (Theorem \ref{main1}), which establishes that every planar
compactum $Z\subset R^{2}$ (such that $R^{2}\backslash Z$ is connected) can
be embedded in a nonseparating planar continuum $W\subset R^{2}$ by
attaching a null sequence of topological arcs (with disjoint interiors) to $%
Z.$

Theorem \ref{main1} effectively generalizes some recent work of Blokh,
Misiurewicz, and Oversteegen \cite{BMO}, who employ a similar strategy in
the special case the nontrivial components of $Z$ form a null sequence of
Peano continua. In \cite{BMO}, the resulting continuum $W$ is a Peano
continuum, but there is generally no hope for such a conclusion in Theorem 
\ref{main1}, since some components of $Z$ can fail to be locally connected.
On the other hand a new technical hurdle (not relevant in \cite{BMO}) is
that $Z$ can have uncountably many components of large diameter, (for
example if $A$ is the product of $[0,1]$ with a Cantor set).

Our proof of Theorem \ref{main1} exploits the $CAT(0)$ geometry of PL planar
disks as determined by internal paths of minimal Euclidean length. In
various settings we wish to select a collection of short disjoint arcs with
certain properties. The $CAT(0)$ geometery supplies nontransverse arcs which
can be perturbed to be disjoint while satisfying the other desired
properties.

\section{Definitions and notation}

PL denotes \textbf{piecewise linear. }An \textbf{arc} is a topological space
homeomorphic to $[0,1].$ A \textbf{disk} is any space homeomorphic to the
closed round unit disk.

If $\alpha \subset R^{2}$ is an arc then the \textbf{length }$l(\alpha )$ is
the familiar Euclidean arc length ( for example if $\alpha $ is $PL$ then $%
l(\alpha )$ the sum of the lengths of the finitely many concatenated line
segments whose union is $\alpha $).

\begin{definition}
\label{MNdef}Suppose $A\subset R^{2}$ is the union of finitely many pairwise
disjoint closed PL topological disks. Suppose $B\subset A$ and suppose $%
B\cap A_{i}\neq \emptyset $ for each component $A_{i}\subset A.$ Let $%
N(A,B)=\inf \{\delta >0|$ for each $x\in \partial A$ there exists $y\in B$
and a PL arc $\gamma \subset A$ connecting $x$ to $y$ such that $l(\gamma
)<\delta \}.$ Let $M(A,B)=\inf \{\varepsilon >0|$ for each $x\in A$ there
exists $y\in B$ and a PL arc $\gamma \subset A$ connecting $x$ to $y$ such
that $l(\gamma )<\delta .$
\end{definition}

The notation \textbf{interior} is slightly abused (in the context of arcs
and trees) as follows.

If $X$ is a 2 dimensional planar set then $int(X)$ denotes the largest open
set $U\subset R^{2}$ such that $U\subset X.$

If $X$ $\subset R^{2}$ is a 2 dimensional continuum let $Fr(X)=X\backslash
int(X)$ and call $Fr(X)$ the \textbf{frontier} of $X.$

However in the special case $\alpha \subset R^{2}$ is an arc we let $%
\partial \alpha $ denotes the endpoints of $\alpha $ and $int(\alpha
)=\alpha \backslash \partial \alpha .$

If $E\subset R^{2}$ is a closed topological disk then $\partial E$ \ denotes
the simple closed curve bounding $int(E).$ Thus $\partial E=\partial
(int(E)).$

If $X\subset R^{2}$ then $X$ is a\textbf{\ tree} if $X$ is connected and
simply connected and homeomorphic to the union of finitely many straight
Euclidean line segments.

If $T$ is a tree then $int(T)=\{x\in T|T\backslash x$ is not connected$\}$
and $\partial T=T\backslash int(T).$

The tree $T$ is a \textbf{triod} if $T$ is homeomorphic to the planar set $%
([-1,1]\times \{0\})\cup (\{0\}\times \lbrack 0,1])$

If $X$ is a tree then $x$ is an \textbf{endpoint} of $X$ if $X\backslash
\{x\}$ is connected.

If $X\subset R^{2}$ then $X$ is \textbf{cellular} if $R^{2}\backslash X$ is
connected and simply connected (i.e. $X$ is a nonseparating planar
continuum).

The metric space $X$ is a \textbf{Peano continuum} if $X$ is compact,
connected, and locally path connected.

\section{Obtaining nested collections of PL disks $S_{n}$}

This section clarifies how we approximate a planar compactum $X$ (with
connected complement) by \ nested finite collections of pairwise disjoint PL
disks $S_{n}.$

\bigskip Recall definition \ref{MNdef}. Informally, using internal path
length distance on $S_{n}$, $M$ is the Hausdorff distance between $S_{n}$
and $S_{n+1}$ and $N$ is the Hausdorff distance between $S_{n+1}\cup
\partial S_{n}$ and $S_{n}.$

It is immediate $M\geq N$. To see why $M$ and $N$ can be dramatically
different, imagine that $S_{n}$ is a large round disk, and $S_{n+1}\subset
S_{n}$ is a continuum that approximates $\partial S_{n},$ but such that $%
S_{n+1}$ is far from the center of the disk $S_{n}.$

Despite the above disparity we have the following Lemma.

\begin{lemma}
\label{MN}Suppose $\forall n\geq 1,$ $S_{n}\subset R^{2}$ is the union of
finitely many pairwise disjoint PL closed topological disks such that $%
S_{n+1}\subset int(S_{n}).$ Then $\ \lim_{n\rightarrow \infty }$ $%
M(S_{n},S_{n+1})=0$ if and only if $N(S_{n},S_{n+1})=0$
\end{lemma}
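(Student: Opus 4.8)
Write $d_n$ for the internal path-length metric of $S_n$; since each $S_n$ is a finite union of PL disks, for every compactum $K\subset S_n$ one has $d_n(x,K)=\inf\{l(\gamma):\gamma\subset S_n\text{ a PL arc from }x\text{ to }K\}$, so that $M(S_n,S_{n+1})=\sup_{x\in S_n}d_n(x,S_{n+1})$ and $N(S_n,S_{n+1})=\sup_{x\in\partial S_n}d_n(x,S_{n+1})$. One implication is immediate: $\partial S_n\subset S_n$ forces $N(S_n,S_{n+1})\le M(S_n,S_{n+1})$, so $\lim_{n\to\infty}M(S_n,S_{n+1})=0$ implies $\lim_{n\to\infty}N(S_n,S_{n+1})=0$. (As $S_{n+1}\subset int(S_n)$ is a nonempty compactum disjoint from the closed set $\partial S_n$, each $N(S_n,S_{n+1})$ is strictly positive, so I read the second condition in the statement as $\lim_{n\to\infty}N(S_n,S_{n+1})=0$.) For the reverse implication my plan is a proof by contradiction resting on a single genuinely planar observation.

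Suppose $M(S_n,S_{n+1})\not\to 0$. Fix $\varepsilon_0>0$ and a subsequence $n_1<n_2<\cdots$ with $M(S_{n_j},S_{n_j+1})>\varepsilon_0$; discarding finitely many terms (using the hypothesis) I may also assume $N(S_{n_j},S_{n_j+1})<\varepsilon_0/3$ for all $j$. For each $j$ pick $x_j\in S_{n_j}$ with $d_{n_j}(x_j,S_{n_j+1})\ge\varepsilon_0$. The heart of the matter is to promote this to: the \emph{round Euclidean} disk $B_j$ of radius $2\varepsilon_0/3$ centered at $x_j$ lies in $int(S_{n_j})$ and is disjoint from $S_{n_j+1}$. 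First, $x_j$ is $d_{n_j}$-far from the entire boundary: for $z\in\partial S_{n_j}$ the definition of $N$ gives $d_{n_j}(z,S_{n_j+1})\le N(S_{n_j},S_{n_j+1})$, so $d_{n_j}(x_j,z)<\varepsilon_0-N(S_{n_j},S_{n_j+1})$ would force $d_{n_j}(x_j,S_{n_j+1})<\varepsilon_0$ by the triangle inequality; hence $d_{n_j}(x_j,\partial S_{n_j})\ge\varepsilon_0-N(S_{n_j},S_{n_j+1})>2\varepsilon_0/3$. Second---the step that really uses planarity---for any point $x$ in the interior of a finite union $S$ of PL disks, the internal distance $d_S(x,\partial S)$ equals the Euclidean distance from $x$ to $\partial S$: the round Euclidean ball about $x$ of that Euclidean radius is connected, meets $int(S)$, and misses $\partial S$, hence lies in $int(S)$, and the straight segment from $x$ to a nearest point of $\partial S$ realizes the distance. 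Therefore the Euclidean distance from $x_j$ to $\partial S_{n_j}$ exceeds $2\varepsilon_0/3$, so $B_j\subset int(S_{n_j})$; since $B_j$ is then a convex Euclidean ball contained in $int(S_{n_j})$, straight segments between its points stay in $S_{n_j}$, so $d_{n_j}$ restricts to the Euclidean metric on $B_j$, and every point of $B_j$ is at $d_{n_j}$-distance $<2\varepsilon_0/3<\varepsilon_0$ from $x_j$ and hence lies outside $S_{n_j+1}$.

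To conclude I would use compactness and nestedness. All the $x_j$ lie in the compactum $S_1$, so after passing to a further subsequence $x_{j_k}\to x^{\ast}$. For $k$ large the round Euclidean disk $B$ of radius $\varepsilon_0/3$ about $x^{\ast}$ is contained in $B_{j_k}$, so $B\subset int(S_{n_{j_k}})$ and $B\cap S_{n_{j_k}+1}=\emptyset$. Because the $S_n$ are nested and $n_{j_k}\to\infty$, the inclusion $B\subset S_{n_{j_k}}$, valid for all large $k$, gives $B\subset\bigcap_n S_n$, and in particular $B\subset S_{n_{j_k}+1}$; since $B\ne\emptyset$ this contradicts $B\cap S_{n_{j_k}+1}=\emptyset$. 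The contradiction establishes $\lim_{n\to\infty}M(S_n,S_{n+1})=0$.

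The step I expect to be the main obstacle is exactly the ``round disk'' observation in the second paragraph. The definitions only provide PL arcs inside $S_n$, and it is the rigidity of $R^2$---that an interior point which is internally far from $\partial S_n$ is literally enclosed by an honest round disk on which the internal metric is Euclidean---that converts the path-metric hypothesis into a statement about nested \emph{open} subsets of the plane, whereupon the conclusion is soft. (It seems worth noting that, unlike the proof of \thmref{main1}, this lemma does not appear to need the $CAT(0)$ geometry of PL disks.) The remaining points are routine: keeping track of the constants $\varepsilon_0$, $2\varepsilon_0/3$, $\varepsilon_0/3$, and the standard fact that for a PL planar set $S$ the quantities $M(S,\cdot)$ and $N(S,\cdot)$ are computed by the length metric of $S$.
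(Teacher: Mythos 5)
Your proof is correct and follows essentially the same route as the paper's: argue by contradiction, produce points $x_n$ internally far from $S_{n+1}$, show a uniform-radius Euclidean ball around each misses $S_{n+1}$ (using that for $n$ large $N(S_n,S_{n+1})$ is small), pass to a convergent subsequence, and contradict $X=\bigcap_n S_n$. The main difference is only expository — you isolate and justify the planar fact that internal distance to $\partial S_n$ equals Euclidean distance to $\partial S_n$, which the paper handles more tersely via its observation about paths of length $<\varepsilon + N(S_n,S_{n+1})$.
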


\begin{proof}
Suppose $\lim_{n\rightarrow \infty }$ $M(S_{n},S_{n+1})=0.$ Then $%
\lim_{n\rightarrow \infty }$ $N(S_{n},S_{n+1})=0$ since $\forall n$ we have $%
N(S_{n},S_{n+1})\leq M(S_{n},S_{n+1}).$

Conversely suppose $\lim_{n\rightarrow \infty }$ $N(S_{n},S_{n+1})=0.$

Let $X=\cap _{n=1}^{\infty }S_{n}.$ Note if $x\in S_{n}$ and if $\varepsilon
>0$ and if $B(x,\varepsilon )\cap (S_{n+1}\cup \partial S_{n+1})\neq
\emptyset $ then there exists a PL path $\gamma \subset S_{n}$ from $x$ to $%
S_{n+1}$ whose length is less than $\varepsilon +N(S_{n},S_{n+1}).$

To obtain a contradiction suppose $\lim \sup M(S_{n},S_{n+1})>\varepsilon
>0. $ Then, retaining a subsequence and reindexing, there exists $x_{n}\in
S_{n}$ such that $B(x_{n},\varepsilon )\cap (S_{n+1}\cup \partial S_{n})=0.$

Note $B(x_{n},\varepsilon )\subset R^{2}\backslash X,$ since $X\subset
S_{n+1}.$

By compactness of $S_{1},$ ( once again retaining a subsequence and
reindexing) we may assume that $x_{n}\rightarrow x.$ Note $x\in S_{n}$ since 
$\{x_{n},x_{n+1},..\}\subset S_{n}$ and $S_{n}$ is closed. Thus $x\in X.$

On the other hand $\left| x-x_{n}\right| \rightarrow 0$. Thus for
sufficiently large $n,$ $x\in B(x_{n},\varepsilon )$ and thus $x\in
R^{2}\backslash X$ and we have a contradiction.
\end{proof}

\begin{theorem}
\label{Sn}Suppose $X\subset R^{2}$ is compact and $R^{2}\backslash X$ is
connected. Then there exists a sequence of closed sets $S_{n}\subset R^{2}$
such that $S_{n}$ is the union of finitely many pairwise disjoint closed PL
topological disks, such that $S_{n+1}\subset int(S_{n})$, such that $X=\cap
_{n=1}^{\infty }S_{n}$ and such that $N(S_{n},S_{n+1})<\frac{1}{10^{n}}$ and
such that 
\begin{equation*}
\lim_{n\rightarrow \infty }M(S_{n},S_{n+1})=0.
\end{equation*}
\end{theorem}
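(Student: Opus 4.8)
The plan is to build the sequence $S_n$ by a recursive "thickening" construction. Given that $X$ is compact with connected complement, I would start from a large round disk $S_1 \supset X$ (recentered if needed), and then, having constructed $S_n \supset X$, cover $X$ by finitely many tiny round open disks $B(x_i, \epsilon_n)$ with centers on $X$ and radius $\epsilon_n$ much smaller than $1/10^{n+1}$, take their union, and replace it by a PL-disk approximation. The key technical issue is that the union of round disks need not be a disjoint union of PL disks: it can have holes (complementary bounded components) and its components can fail to be simply connected or PL. I would fix this by (i) filling in all bounded complementary components of $\bigcup_i B(x_i,\epsilon_n)$ — this is legitimate precisely because $R^2 \setminus X$ is connected, so any such hole, once $\epsilon_n$ is small, actually lies within a controlled neighborhood and filling it keeps us inside $\mathrm{int}(S_n)$ — and then (ii) taking a PL disk $S_{n+1}$ sandwiched between $X$ and a slight outward expansion of the filled union, still inside $\mathrm{int}(S_n)$. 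The components of the filled union are the "clusters" of the covering disks, which are automatically simply connected after filling, and one approximates each by a PL topological disk.

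**Next I would verify the two quantitative conditions.** For $N(S_n, S_{n+1}) < 1/10^n$: every point $x \in \partial S_n$ lies outside $X$ but, since $S_{n+1}$ is built from an $\epsilon_n$-cover of $X$ and $X = \bigcap S_n$, one must show $\partial S_n$ is within internal-path-distance $1/10^n$ of $S_{n+1}$. This requires choosing $S_n$ itself thin enough relative to $X$ — concretely, I would arrange at each stage that $S_n$ is contained in the $1/10^{n+1}$-neighborhood of $X$ (in the Euclidean metric), so any boundary point of $S_n$ is Euclidean-close to $X$, hence close to some covering-disk center used to build $S_{n+1}$, and the straight segment to that center is a short PL arc inside $S_n$ (here a little care is needed to keep the segment inside $S_n$, but since $S_n$ is a thin PL neighborhood of $X$ and the segment is short, this can be guaranteed, possibly after a mild perturbation of the segment). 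Since $S_n$ lies in the $1/10^{n+1}$-neighborhood of $X$, we automatically get $M(S_n,S_{n+1}) \to 0$: in fact $M(S_n,S_{n+1})$ is bounded by the diameter-scale of the components of $S_n$ together with their internal connectivity, and as the $S_n$ collapse onto $X$ this can be driven to $0$ — or, more cleanly, I would simply invoke Lemma \ref{MN}, which says $M(S_n,S_{n+1}) \to 0$ is \emph{equivalent} to $N(S_n,S_{n+1}) \to 0$, and the latter follows from $N(S_n,S_{n+1}) < 1/10^n$.

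**So the real content reduces to:** constructing, for each $n$, a finite disjoint union of PL closed topological disks $S_{n+1}$ with $X \subset \mathrm{int}(S_{n+1}) \subset S_{n+1} \subset \mathrm{int}(S_n)$, with $S_{n+1}$ inside the $1/10^{n+2}$-neighborhood of $X$, and with $N(S_n, S_{n+1}) < 1/10^n$. Finally $X = \bigcap_n S_n$ holds because $X \subset S_n$ for all $n$ by construction, and conversely any point in $\bigcap_n S_n$ is within $1/10^{n+1}$ of $X$ for every $n$, hence in $\overline{X} = X$.

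**The main obstacle** I anticipate is the hole-filling step and keeping everything nested and PL simultaneously: a priori, filling bounded complementary components of the union of round covering disks could push $S_{n+1}$ outside $\mathrm{int}(S_n)$, or could merge components in an uncontrolled way; the point that makes it work is the hypothesis that $R^2 \setminus X$ is connected, which forces every bounded complementary component of a sufficiently-fine cover of $X$ to be "small" (it cannot persist in the limit, since it would survive as a bounded complementary component of $X$), so for $\epsilon_n$ small enough all such holes lie inside the $1/10^{n+1}$-neighborhood of $X$ and filling them is harmless. Making this last claim precise — a compactness/limit argument showing bounded complementary components of fine covers of $X$ shrink uniformly — is the one genuine estimate in the proof, and I would carry it out by contradiction, extracting from a hypothetical sequence of "large" holes a limiting bounded component of $R^2 \setminus X$.
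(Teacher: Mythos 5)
Your construction is correct in outline, but it takes a genuinely different route from the paper for the critical step of guaranteeing $X = \bigcap_n S_n$. The paper fixes a base point $z \in R^2\setminus X$ and builds an explicit exhaustion $A_2 \subset A_3 \subset \cdots$ of $R^2\setminus X$ by connected unions of grid squares of mesh $2^{-n}$; the covering balls for $S_{n+1}$ are chosen to miss $A_{n+1}$, and the induction hypothesis $S_n \cap A_n = \emptyset$ then forces every $y \notin X$ to escape some $S_n$. You instead impose the quantitative Hausdorff constraint $S_n \subset N_{1/10^{n+1}}(X)$, which immediately gives $\bigcap S_n = X$ but forces you to prove a separate lemma that the bounded complementary components of a fine ball-cover of $X$ shrink uniformly. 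That lemma is true and is exactly where the hypothesis that $R^2\setminus X$ is connected enters your argument; the paper routes the same hypothesis through the exhaustion $A_n$ and through the observation that the filled holes of $Y_{n+1}$ automatically lie in $\mathrm{int}(S_n)$ because $S_n$ is a disjoint union of disks. Your version is conceptually cleaner (neighborhood shrinkage is the honest invariant) but pays for it with the compactness estimate; the paper's version is more constructive. Both work.

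One imprecision worth fixing: ``extracting from a hypothetical sequence of large holes a limiting bounded component of $R^2\setminus X$'' is not quite what the compactness argument produces, since the Hausdorff limit of $\overline{V_n}$ is a subcontinuum of $X$, not a complementary component, and $R^2\setminus X$ simply has no bounded components to find. The correct statement is: if $V_n$ is a bounded component of $R^2\setminus N_{\epsilon_n}(X)$ and $p_n \in V_n$ satisfies $\mathrm{dist}(p_n,X) \geq \delta$, pass to a subsequence with $p_n \to p \in R^2\setminus X$; take $w$ far from $X$, let $\gamma$ be a path from $p$ to $w$ in $R^2\setminus X$, and note $\rho = \mathrm{dist}(\gamma,X) > 0$; then for $\epsilon_n < \min(\rho,\delta/2)$ and $|p_n - p| < \delta/2$, the path $[p_n,p]\cup\gamma$ avoids $N_{\epsilon_n}(X)$ and joins $V_n$ to the unbounded component, a contradiction. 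This gives $\sup\{\mathrm{dist}(q,X) : q \text{ in a bounded hole}\} \to 0$ as $\epsilon_n \to 0$, which is exactly the bound you need (it is the distance to $X$ of the points in the hole, not the diameter of the hole, that must be controlled). With that statement in hand, your recursion goes through.
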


\begin{proof}
Let $U=R^{2}\backslash X.$ Fix $z\in U.$ Obtain nested path connected closed
sets $A_{2}\subset A_{3}...$ such that $U=\cup _{n=2}^{\infty }A_{n}$ as
follows.

Let $T_{n}$ be a tiling of the plane (such that $z$ is a corner of some
tile), by closed squares of sidelength $\frac{1}{2^{n}}$ parallel to the $x$
or $y$ axis$.$

Let $A_{n}$ be a maximal path connected set containing $z$ such that each
closed tile of $A_{n}$ is strictly contained in $U$. Since $U$ is open and
path connected the sets $A_{n}$ cover $U.$

Since $X$ is compact obtain $R>0$ such that $X\subset \lbrack -R,R]\times
\lbrack -R,R]$ and let $S_{1}=[-R,R]\times \lbrack -R,R]$ and let $%
A_{1}=\emptyset .$

Suppose $n\geq 1$ and $S_{n}$ has been defined and $S_{n}$ is a collection
of pairwise disjoint topological disks such that $X\subset S_{n}$ and such
that $S_{n}\cap A_{n}=\emptyset .$

For each component $P\subset S_{n}$ and each $x\in $ $P\cap X$ obtain $%
0<\delta _{x}^{n}<\frac{1}{10^{n}}$ such that $\overline{B(x,\delta _{x}^{n})%
}\subset int(P)$ and such that $\overline{B(x,\delta _{x}^{n})}\cap
A_{n+1}=\emptyset .$ By compactness of $P\cap X$ we obtain a finite
subcovering $\{B\{x_{i},\delta _{x_{i}}^{n}\}\}.$ Let $Y_{n+1}=\cup 
\overline{B(x_{i},\delta _{x_{i}}^{n})}$ $\ $with components $%
Q_{1},...Q_{m}. $ By definition for each $y\in Q_{i}$ there exists $x\in X$
such that the line segment $[y,x]\subset Q_{i}$ and such that $l([y,x])<%
\frac{1}{10^{n}}.$

Notice $Q_{i}$ is a locally contractible planar continuum and consequently $%
Q_{i}$ has the homotopy type of a disk with finitely many open punctures.

Thicken the outer boundary of $Q_{i}$ very slightly to obtain pairwise
disjoint PL disks $P_{1},..P_{m}$ such that $Q_{i}\subset int(P_{i})$ and
such that $N(P_{i},X\cap P_{i})<\frac{1}{10^{n}}.$

Let $S_{n+1}$denote the union of the PL disks $P_{i}$ obtained in the
fashion just described.

By construction $X\subset S_{n+1}\subset int(S_{n})$ and $S_{n+1}$ is the
union of finitely many pairwise disjoint $P$ disks such that if $P$ is a
component of $S_{n+1}$ then $X\cap P\subset int(P).$ To see that $X=\cap
_{n=1}^{\infty }S_{n}$ it is immediate that $X\subset \cap S_{n}$ since $%
X\subset S_{n}$ for each $n.$ Conversely suppose $y\notin X$. Then there
exists $n\geq 2$ such that $y\in A_{n}$ and in particular $y\notin S_{n}.$

Since $N(S_{n},S_{n+1})<\frac{1}{10^{n}},$ it follows that $%
N(S_{n},S_{n+1})\rightarrow 0$ and hence by Lemma \ref{MN} $%
M(S_{n},S_{n+1})\rightarrow 0.$
\end{proof}

\section{The $CAT(0)$ geometry of a PL planar disk.}

If $P\subset R^{2}$ is a closed PL disk, then internal paths (in $P$) of
minimal Euclidean length determines a metric as follows.

Define $d_{P}:P\times P\rightarrow \lbrack 0,\infty )$ so that $d(x,x)=0$
and $\forall M\geq 0,$ and $x\neq y,$ $d(x,y)\leq M$ iff there exists a PL
arc $\alpha \subset P$ such that $\partial \alpha =\{x,y\}$ and $l(\alpha
)\leq M.$

Note $d_{P}$ is a topologically compatible metric (since $P$ is locally path
connected and short paths exist locally).

Using polar coordinates, if $R>0$ and $0<\psi \leq 2\pi $ let $D(R,\psi
)=\{(r,\theta )\in R^{2}|r<R$ and $\theta <\psi $ $\}.$

(For a careful exposition of the elementary properties of $CAT(0)$ spaces we
refer the reader to \cite{Bridson}.)

Notice $P$ has a basis of open sets each of which is isometric to some set
of the form $D(R,\psi ),$ a round Euclidean disk, possibly missing an open
sector.

By inspection, pairs of points in $D(R,\psi )$ are connected by a canoniical
unique path of minimal length. Moreover the triangles $T\subset D(R,\psi )$
are `thin', a pair of points in $T$ is at least as close in $T$ as their
counterparts in the canonical Euclidean comparison triangle $S.$

Thus $D(R,\psi )$ is a \textbf{CAT(0) }space, and hence $(P,d_{P})$ is
locally $CAT(0).$ Since $P$ is compact and simply connected, $(P,d_{p})$ is $%
CAT(0)$.

There are 3 important properties of $(P,d_{P})$ (which are true in any a $%
CAT(0)$ space).

1) Given $\{x,y\}\subset P$ there exists a unique arc (or point if $x=y$) of
minimal length (a \textbf{geodesic }) connecting $x$ and $y.$

2) The geodesics and their lengths vary continuously with the endpoints.

3) The intersection of two geodesics is connected or empty.

However we will also need the following 4th (special) property of $(P,d_{P})$
concerning the geometry of an embedded triod $T\subset P.$

\begin{lemma}
\label{notriod}Suppose $P\subset R^{2}$ is a PL disk with $CAT(0)$ metric
determined by paths of minimal Euclidean length. Suppose $T\subset P$ is a
topological triod such that $\partial T=\{a,b,c\}$ and $x\in T$ is the
vertex. Suppose $\alpha _{ab},\alpha _{bc},\alpha _{ac}$ denote the arcs in $%
T$ connecting the respective pairs of endpoints. Then at least one of $%
\alpha _{ab},\alpha _{bc},\alpha _{ac}$ is not a geodesic.
\end{lemma}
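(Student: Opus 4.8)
The plan is to argue by contradiction: suppose all three arcs $\alpha_{ab}$, $\alpha_{bc}$, $\alpha_{ac}$ are geodesics. Since $T$ is a triod with vertex $x$, each of these arcs necessarily passes through $x$; indeed $\alpha_{ab}$ is the unique arc in $T$ from $a$ to $b$, and it decomposes as (the $a$-to-$x$ branch) followed by (the $x$-to-$b$ branch), and similarly for the other two. Denote by $\gamma_a$, $\gamma_b$, $\gamma_c$ the three closed branches of $T$ from $x$ to $a$, $b$, $c$ respectively, so that $\alpha_{ab}=\gamma_a\cup\gamma_b$, $\alpha_{bc}=\gamma_b\cup\gamma_c$, $\alpha_{ac}=\gamma_a\cup\gamma_c$, and these branches meet pairwise exactly in $\{x\}$.

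The key point is a local one at the vertex $x$. In a $CAT(0)$ space a concatenation of two geodesic segments is again a geodesic if and only if the angle between them at the concatenation point equals $\pi$ (this is the standard first-variation / Alexandrov-angle criterion, see \cite{Bridson}; in our planar PL setting one can also see it directly from the local model $D(R,\psi)$, where a path through an interior point is length-minimizing precisely when it is ``straight'', i.e. subtends angle $\pi$). Since $\alpha_{ab}=\gamma_a\cup\gamma_b$ is assumed geodesic, the Alexandrov angle at $x$ between $\gamma_a$ and $\gamma_b$ is $\pi$; likewise the angle between $\gamma_b$ and $\gamma_c$ is $\pi$, and the angle between $\gamma_a$ and $\gamma_c$ is $\pi$. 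But the Alexandrov angles at $x$ between the three incident directions satisfy the triangle inequality on the space of directions (the link of $x$ is a metric space with the angle as its metric), so
\begin{equation*}
\angle_x(\gamma_a,\gamma_c)\le \angle_x(\gamma_a,\gamma_b)+\angle_x(\gamma_b,\gamma_c),
\end{equation*}
and moreover, since these three directions are distinct points of the link (the branches leave $x$ along genuinely different directions, as $T$ is an embedded triod and the three branches meet only at $x$), equality $\pi=\pi+\pi$ is absurd. Alternatively, and perhaps cleaner in the planar setting: consider the three outgoing directions of $\gamma_a,\gamma_b,\gamma_c$ at $x$ inside a small local chart $D(R,\psi)$ around $x$; ``angle $\pi$'' between each pair forces each branch direction to be the antipode of each of the other two, which is impossible for three distinct directions.

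I expect the main obstacle to be the bookkeeping needed to make ``each branch leaves $x$ in a well-defined direction, and the three directions are pairwise distinct'' rigorous, i.e. identifying the correct notion of direction/link at a point of a planar PL $CAT(0)$ disk and verifying that the angle-$\pi$ characterization of geodesic concatenation and the triangle inequality for Alexandrov angles both apply. Once the link of $x$ is set up as a metric space under the Alexandrov angle, the contradiction is immediate. A secondary, minor point to handle is the degenerate possibility that two of the branches $\gamma_a,\gamma_b,\gamma_c$ are tangent at $x$ (share an initial direction); but then that pair of branches, each being a geodesic out of $x$, would coincide on an initial segment by uniqueness of geodesics, contradicting that they meet only at $x$ — so the three directions are genuinely distinct and the argument goes through.
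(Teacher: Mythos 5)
Your overall strategy, to assume all three sides are geodesics and extract a contradiction at the vertex $x$ from the three ``angle $\pi$'' conditions, is sound and in the same spirit as the paper. But the argument you lean on first, the triangle inequality for Alexandrov angles, does not actually produce a contradiction: it yields $\angle_{x}(\gamma_{a},\gamma_{c})\leq \angle_{x}(\gamma_{a},\gamma_{b})+\angle_{x}(\gamma_{b},\gamma_{c})$, i.e.\ $\pi\leq 2\pi$, which is vacuous, and the sentence ``equality $\pi=\pi+\pi$ is absurd'' does not correspond to anything the triangle inequality asserts. In fact no purely abstract $CAT(0)$ argument can close this gap: in the Euclidean cone over a circle of circumference $3\pi$ (a $CAT(0)$ space), the three rays from the cone point in directions $0$, $\pi$, $2\pi$ form a triod all three of whose sides are geodesics, since each pair of directions is at angular distance exactly $\pi$ in the link. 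The lemma genuinely needs the flat local geometry of a PL planar disk, not just $CAT(0)$-ness together with facts about angles.

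Your ``alternative'' paragraph is the argument that works, and in substance it is the paper's argument. The paper reaches the same point via convexity: if all three sides are geodesics then $T$ is convex in $(P,d_{P})$; the small ball $\overline{B(x,\delta )}\cap P$ is a flat round disk or flat sector and is convex; but three straight segments emanating from $x$ inside a flat disk or sector do not form a convex set. Your version phrases this as: in the flat local chart the angle-$\pi$ condition forces the three outgoing directions to be pairwise opposite, which is impossible. One correction you need for the case $x\in \partial P$: there the local chart is a flat sector of angle $\beta \in (0,2\pi )$ whose link is an arc, not a circle, so ``antipodal'' does not literally apply; instead, three directions $\theta _{1}<\theta _{2}<\theta _{3}$ lying in an interval of length $\beta <2\pi$ cannot be pairwise at Euclidean angular distance at least $\pi$, since that would force $\theta _{3}-\theta _{1}\geq 2\pi$. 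If you drop the triangle-inequality paragraph and promote the flat-chart argument with this boundary case included, your proof is correct and matches the paper's.
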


\begin{proof}
The idea is to notice on the small scale near $x$, $T$ consists of 3
distinct straight segments emanating from $x.$ On the small scale at least
one of the 3 complementary sectors must be contained in $int(P)$, and this
creates the possibility to shorten the side of $T$ bounding the selected
sector.

To obtain a contradiction let $[a,x]\cup \lbrack x,b],$ and $[c,x]\cup
\lbrack x,b]$ and $[c,x]\cup \lbrack x,a]$ denote the geodesic sides of $T.$
In particular $T$ is convex in $P.$

On the other hand we may choose $\varepsilon >0$ so that if $t<\varepsilon $
we have 3 distinct Euclidean line segments emanating from $x\in P:$ $%
[x,ta]\subset \lbrack x,a],$ $[x,tb]\subset \lbrack x,b],$ and $%
[x,tc]\subset \lbrack x,c].$

Since $P\subset R^{2}$ is a PL disk, there exists $\delta <\varepsilon ,$
such that $\overline{B(x,\delta )}\cap P$ is isometric to a round disk with
an open (possibly empty) sector missing ( and we allow that a sector can
have angle $>180^{0}$). Note $\overline{B(x,\delta )}\cap P$ is convex in $%
P. $

However, by inspection $T\cap \overline{B(x,\delta )}$ is not convex in $%
\overline{B(x,\delta )}\cap P$ contradicting our assumption that $T$ is
convex in $P.$
\end{proof}

\section{Perturbing a PL disk}

Given a PL disk $Q\subset R^{2},$ with finitely many marked points $Y\subset
\partial Q$ we wish to show the existence of arbitrarily small perturbations 
$P$ of $Q,$ so that the respective geometries of $P$ and $Q$ are very close
to each other$,$ and so that $P\subset Q$ and $Q\cap \partial P=Y.$

This is obvious and the idea is simply to push the components of $\partial
Q\backslash Y$ inward by a very tiny amount.

\begin{lemma}
\label{ham}Suppose $\alpha \subset R^{2}$ is a PL arc and $\delta >0.$ There
exist PL arcs $\beta \subset R^{2}$ and $\gamma \subset R^{2}$ such that $%
int(\beta )\cap int(\gamma )=\emptyset $ and $\alpha $ is a spanning arc of
the PL disk $D(\beta ,\gamma )$ and there exists a homeomorphism $h:D(\beta
,\gamma )\rightarrow D(\beta ,\alpha )$ such that $\left| d_{D(\beta ,\alpha
)}(h(x),h(y))-d_{D(\beta ,\gamma )}(x,y)\right| <\delta .$
\end{lemma}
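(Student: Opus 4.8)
My plan is to build $\beta$ and $\gamma$ as two very thin pushoffs of $\alpha$, one to each side, so that $D(\beta ,\gamma )$ is a thin tube having $\alpha $ as its core spanning arc, and then to take $h$ to be a homeomorphism of $D(\beta ,\gamma )$ onto $D(\beta ,\alpha )$ that respects the longitudinal coordinate of this tube. Concretely, parametrize $\alpha $ by Euclidean arc length as $\alpha \colon [0,L]\to R^{2}$ with $p=\alpha (0)$, $q=\alpha (L)$, $L=l(\alpha )$. Since $\alpha $ is a fixed embedded PL arc there is $\varepsilon _{0}>0$ so that for each $\varepsilon \in (0,\varepsilon _{0})$ one may push $\alpha $ off itself by $\varepsilon $ to one side to obtain an embedded PL arc $\beta $ from $p$ to $q$, and to the other side to obtain an embedded PL arc $\gamma $ from $p$ to $q$, with $int(\beta )\cap int(\gamma )=\emptyset $, with $\beta $ and $\gamma $ lying within $\varepsilon $ of $\alpha $ on opposite sides of it, and with $\beta \cup \gamma $ a simple closed curve bounding a PL disk $D(\beta ,\gamma )$ in which $\alpha $ is a spanning arc. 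This is the routine ``push the free boundary inward a little'' step mentioned just before the statement; one only needs $\varepsilon $ small relative to the geometry of $\alpha $ and a PL collar of $\alpha $ so that the pushoffs stay embedded and disjoint. Note $D(\beta ,\gamma )=D(\beta ,\alpha )\cup D(\alpha ,\gamma )$, a union of two thin PL subdisks meeting along $\alpha $, and that every point of $D(\beta ,\gamma )$ is within $\varepsilon $ of $\alpha $.

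Next I would equip each tube $S\in \{D(\beta ,\alpha ),D(\beta ,\gamma )\}$ with a ``longitude'' $\tau _{S}\colon S\to [0,L]$. Decompose $\alpha $ into its finitely many edges and vertices; over each edge cut out the corresponding thin rectangular piece of $S$ and over each vertex a small triangular piece of $S$, and define $\tau _{S}$ to be affine in the along-edge direction on each rectangle, with slope matching arc length (so that $\tau _{S}$ restricted to $\alpha $ is exactly the arc-length parameter), and essentially constant on each triangular piece. After inserting a parameter interval of length $O(\varepsilon )$ at each vertex, one arranges that $\tau _{S}$ is $1$-Lipschitz for the Euclidean metric and that each $x\in S$ lies within $c\varepsilon $ of $\alpha (\tau _{S}(x))$, where $c$ depends only on the vertex angles of $\alpha $. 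Both $D(\beta ,\alpha )$ and $D(\beta ,\gamma )$ are then products over a common parameter arc with interval fibers via charts whose first coordinate is $\tau _{S}$, so I take $h\colon D(\beta ,\gamma )\to D(\beta ,\alpha )$ to be such a chart of the domain, followed by the identity on the parameter factor and a fixed homeomorphism of fibers, followed by the inverse chart of the target; then $\tau _{D(\beta ,\alpha )}(h(x))=\tau _{D(\beta ,\gamma )}(x)$ for all $x$.

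The metric comparison then follows from a single estimate valid in either tube $S$: any PL path $\sigma $ in $S$ from $x$ to $y$ has $l(\sigma )$ at least the total variation of $\tau _{S}\circ \sigma $, which is at least $|\tau _{S}(x)-\tau _{S}(y)|$ because $\tau _{S}$ is $1$-Lipschitz, so $d_{S}(x,y)\ge |\tau _{S}(x)-\tau _{S}(y)|$; conversely the path running from $x$ to $\alpha (\tau _{S}(x))$ inside $S$ (length $O(\varepsilon )$, as $S$ is a thin tube), then along $\alpha \subset S$ to $\alpha (\tau _{S}(y))$ (length $|\tau _{S}(x)-\tau _{S}(y)|$, as $\tau _{S}|_{\alpha }$ is arc length), then to $y$ inside $S$ (length $O(\varepsilon )$), gives $d_{S}(x,y)\le |\tau _{S}(x)-\tau _{S}(y)|+2c\varepsilon $. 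Applying this with $S=D(\beta ,\gamma )$ and with $S=D(\beta ,\alpha )$, and using $\tau _{D(\beta ,\alpha )}(h(x))=\tau _{D(\beta ,\gamma )}(x)$, both $d_{D(\beta ,\gamma )}(x,y)$ and $d_{D(\beta ,\alpha )}(h(x),h(y))$ lie in one interval of length $2c\varepsilon $, hence differ by at most $2c\varepsilon $; choosing $\varepsilon <\delta /(2c)$ at the outset finishes the proof. The one point needing genuine care, and what I would regard as the main obstacle, is the construction of $\tau _{S}$: the naive candidate, nearest-point projection onto $\alpha $ composed with arc length, fails to be $1$-Lipschitz near reflex vertices of $\alpha $, which is exactly why I split the tube into rectangular pieces over the edges and triangular pieces over the vertices. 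Everything else — the existence of the thin two-sided pushoffs, the product structure of a thin PL tube, and the fiber-preserving choice of $h$ — is standard PL topology, consistent with the remark that the construction is ``obvious''.
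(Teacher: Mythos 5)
Your overall strategy (thin two-sided pushoffs, a fiber-preserving squeezing homeomorphism, and a ``longitude'' function that both internal metrics approximately measure) is the same in spirit as the paper's, and I think it can be made to work. But there is a real gap in the key technical claim: that one can construct a longitude $\tau_S$ which is $1$-Lipschitz for the Euclidean metric while $\tau_S|_\alpha$ is exactly arc length. That claim fails at \emph{every} vertex of $\alpha$, not only at ``reflex'' ones — you have addressed the wrong side of the corner. Let $v$ be a vertex at which the two incident edges of $\alpha$ make interior angle $\theta<\pi$ on one side. Take $y_1,y_2\in\alpha$ at arc length $\delta$ from $v$ on opposite arms. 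Their Euclidean distance is $2\delta\sin(\theta/2)<2\delta$, while $|\tau_S(y_1)-\tau_S(y_2)|=2\delta$. So $\tau_S$ has Lipschitz constant at least $1/\sin(\theta/2)>1$ near $v$, and no insertion of ``parameter intervals'' in the pie-slice on the reflex side can repair this, because the failure happens on the convex side, on $\alpha$ itself. Since $D(\beta,\gamma)$ contains both sides of $\alpha$, the Euclidean chord $[y_1,y_2]$ actually lies in the strip for $\delta=O(\varepsilon)$, so the failure is not an artifact of the ambient metric: the internal distance $d_{D(\beta,\gamma)}(y_1,y_2)$ is genuinely $2\delta\sin(\theta/2)$ there, strictly less than $|\tau_S(y_1)-\tau_S(y_2)|$. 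Your lower bound $d_S(x,y)\ge|\tau_S(x)-\tau_S(y)|$ is therefore false, and the conclusion that both distances lie in a single interval of length $2c\varepsilon$ does not follow.

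The fix is not hard but does require work you have not done: you need either to compress $\tau_S|_\alpha$ by the factor $\sin(\theta/2)$ over an arc of length $O(\varepsilon)$ around each vertex on the convex side (which then breaks the claim that the middle leg of your comparison path has length $\le|\tau_S(x)-\tau_S(y)|$, reintroducing a bookkeeping problem), or to replace the multiplicative $1$-Lipschitz estimate by the additive one $d_S(x,y)\ge|\tau_S(x)-\tau_S(y)|-C\varepsilon$ with $C$ depending on the angles of $\alpha$, and prove that inequality directly — e.g.\ by showing that the vertex shortcuts each save at most $O(\varepsilon)$ in length. Either route salvages your argument and keeps it cleaner than the paper's cell-by-cell length estimate along geodesics, but the $1$-Lipschitz claim as you state it is incorrect, and its role in your proof is load-bearing, so as written the proof does not go through.

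Also a small internal inconsistency worth flagging: you say simultaneously that $\tau_S|_\alpha$ is exactly arc length and that $\tau_S$ is ``essentially constant on each triangular piece'' — if a triangular piece covers a vertex $v\in\alpha$, these two demands collide; if they don't, then the triangular pieces sit only on the reflex side and do nothing about the convex-side failure described above.
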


\begin{proof}
Let $v_{0},....,v_{n}$ denote consecutive vertices of $\alpha .$ Notice if $%
\gamma $ is sufficiently small there exist points $w_{1},...w_{n-1}$ such
that $w_{i}\notin \alpha $ and $\left| w_{i}-v_{i}\right| <\gamma $ and such
that all of the following hold:

1)\ The PL path $\beta =[w_{0},w_{1}]\cup \lbrack w_{1},w_{2}]...\cup
\lbrack w_{n-1},w_{n}]$ is an arc such that $int(\alpha )\cap int(\beta
)=\emptyset .$

2) If $T_{i}$ denotes the convex hull of $\{v_{i-1},w_{i-1},v_{i},w_{i}\},$
then $T_{i}$ is convex in $R^{2}$.

3) The closed disk $D(\beta ,\alpha )$ bounded by $\alpha $ and $\beta $ can
be canonically fibred by line segments as follows:

If $L_{i}:[v_{i},v_{i+1}]\rightarrow \lbrack w_{i},w_{i+1}]$ is the order
preserving linear homeomorphism then $[v_{i,t},L_{i}(v_{i,t})]\subset
D(\beta ,\alpha \dot{)}$

and if $j\neq i$ or if $s\neq t$ then $[v_{i,t},L_{i}(v_{i,t})]\cap \lbrack
v_{j,s},L_{i}(v_{j,s})]=\emptyset .$

4) If $i\in \{2,3,...n-1\}$ then $D(\beta ,\alpha )\backslash T_{i}$ is
disconnected.

Given $\alpha $ and $\beta $, we can perform a similar construction on the
other side of $\alpha $ to obtain an arc $\gamma $ with the same properties
(with respect to $\alpha $) as $\beta $ such that $int(\gamma )\cap
int(\beta )=\emptyset .$

Let $S_{1}<..<S_{n}$ denote the convex cells of $D(\alpha ,\gamma )$ and let 
$u_{0},..u_{n}$ denote the vertices of $\gamma .$

Now from the above construction we have canonical PL homeomorphisms $%
f:\alpha \rightarrow \beta $ and $g:\alpha \rightarrow \gamma .$

Hence we obtain a canonical homeomorphism $h:D(\beta ,\gamma )\rightarrow
D(\alpha ,\gamma )$ as follows. For each $x\in \alpha ,$ $\ $let $h$ map the
PL arc $[f(x),x]\cup \lbrack x,g(x)]$ `linearly' onto the segment $[x,g(x)]$
so that $h_{[x,g(x)]}^{-1}$ has constant speed.

Q Since $T_{i}$ is convex, if the line segment $\gamma _{i}\subset T_{i}$
connects $[w_{i-1},v_{i-1}]$ to $[v_{i},w_{i},]$ then $l([w_{i-1},v_{i-1}])%
\leq l(\gamma _{i})\leq l([v_{i},w_{i},])$ or $l([w_{i-1},v_{i-1}])\geq
l(\gamma _{i})\geq l([v_{i},w_{i},]).$

If $\gamma $ is small then $\left| \left| w_{i-1}-w_{i}\right| -\left|
v_{i-1}-v_{i}\right| \right| <\frac{\delta }{2n}$ and $\left| \left|
u_{i-1}-u_{i}\right| -\left| v_{i-1}-v_{i}\right| \right| <\frac{\delta }{2n}
$

By condition 4, a given geodesic $\lambda \subset D(\beta ,\gamma \dot{)}$
is the union of line segments from consecutive cells $T_{i}\cup
S_{i},...,T_{i+k}\cup S_{i+k}.$

Consequently $l(h(\gamma ))<l(\lambda )+n(\frac{\delta }{2n}+\frac{\delta }{%
2n})=l(\lambda )+\delta .$

In similar fashion a given geodesic $\lambda \subset D(\alpha ,\gamma \dot{)}
$ is the union of line segments from consecutive cells $%
S_{i},S_{i+1},...S_{i+k}.$ Hence $l(h^{-1}(\lambda )<l(\lambda )+\delta .$
\end{proof}

\begin{lemma}
\label{push}Suppose $P\subset R^{2}$ is a PL disk and $Y=\{y_{1}..,y_{n}\}%
\subset \partial P$ and $\varepsilon >0.$ There exists a PL disk $Q\subset
R^{2}$ such that $P\subset Q$ and $\partial P\cap \partial Q=Y$ and there
exists a homeomorphism $h:Q\rightarrow P$ such that $\left|
d_{Q}(x,y)-d_{P}(h(x),h(y))\right| <\varepsilon $ and such that $%
h_{Y}=id_{Y}.$
\end{lemma}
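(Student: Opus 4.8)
The plan is to build $Q$ from $P$ by ``inflating'' $P$ outward across each component of $\partial P \setminus Y$ by a very small amount, using \lemref{ham} as the local model on each such component. Concretely, $\partial P \setminus Y$ is a disjoint union of finitely many open arcs $\sigma_1, \dots, \sigma_k$ (the ``sides'' of $P$ cut at the marked points $y_i$). For each $\sigma_j$, let $\alpha_j = \overline{\sigma_j}$, a PL arc whose endpoints are two of the marked points. Apply \lemref{ham} to $\alpha_j$ with a tolerance $\delta_j$ to be specified: this yields PL arcs $\beta_j, \gamma_j$ with $\alpha_j$ a spanning arc of the thin PL disk $D(\beta_j, \gamma_j)$ and an isometry-up-to-$\delta_j$ homeomorphism $D(\beta_j,\gamma_j) \to D(\beta_j, \alpha_j)$. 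I will keep the copy of this thin disk lying on the outside of $P$ — i.e. I glue the band $D(\beta_j, \alpha_j)$ onto $P$ along $\alpha_j$, with the band's interior disjoint from $P$ — and take $Q = P \cup \bigcup_j D(\beta_j, \alpha_j)$. Since the bands are attached along the boundary arcs $\sigma_j$ and can be taken arbitrarily thin with pairwise disjoint interiors (shrinking further near the marked points $y_i$ where consecutive bands meet, so that the union is still an embedded PL disk), $Q$ is a PL disk with $P \subset Q$ and $\partial P \cap \partial Q = \{y_1, \dots, y_n\} = Y$, because each $y_i$ is a common endpoint of two attached bands and survives on $\partial Q$, while every interior point of each $\sigma_j$ is pushed into $int(Q)$.

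Next I would define the homeomorphism $h : Q \to P$. It is the identity on $P$ minus a thin collar of $\partial P$, and on each band $D(\beta_j, \alpha_j)$ together with an adjacent thin collar $C_j \subset P$ of $\sigma_j$ it is the ``reverse'' of the \lemref{ham} model: collapse the band $D(\beta_j,\alpha_j)$ back across the collar $C_j$ so that $\beta_j \mapsto$ (inner boundary of $C_j$) and $\alpha_j \mapsto \alpha_j$ pointwise, interpolating by the canonical linear fibration. Thus $h$ fixes $Y$ pointwise (this is immediate since $Y \subset \alpha_j$ for the two bands meeting at each $y_i$, and $h$ fixes each $\alpha_j$), and $h = id$ outside the collar. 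The point of routing through \lemref{ham} is that it guarantees the local metric distortion of this collapse is controlled: on the model $D(\beta_j, \gamma_j)$, the two sides $\beta_j, \gamma_j$ play symmetric roles, and \lemref{ham} says geodesics in $D(\beta_j, \gamma_j)$ and in $D(\beta_j, \alpha_j)$ have lengths differing by less than $\delta_j$. By taking $\delta_j$ and the collar widths small (in particular, total added ``girth'' $\ll \varepsilon$), the distance function $d_Q$ on $Q$ differs from the pullback of $d_P$ by $h$ by less than $\varepsilon$.

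The estimate $|d_Q(x,y) - d_P(h(x),h(y))| < \varepsilon$ is the real content and the main obstacle, because distances in $(Q, d_Q)$ and $(P, d_P)$ are \emph{global} quantities: a $d_Q$-geodesic between two points of $P$ might detour through several of the newly attached bands, and I must show such a detour cannot be much shorter than any path staying inside $P$. This is where the thinness from \lemref{ham}, and especially the convexity condition 4 in that lemma (each cell $T_i$ separates the band, so a geodesic crossing a band does so ``monotonically'' through consecutive cells), is used: entering and leaving a band of girth $< \delta_j$ costs at most $O(\delta_j)$ more than travelling along $\alpha_j$ directly, so no geodesic gains more than $\sum_j \delta_j < \varepsilon/2$ by using bands, and conversely a $d_P$-geodesic lifts to a path in $Q$ of nearly the same length. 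Combined with the fact that $h$ moves no point more than the collar width, a triangle-inequality bookkeeping argument gives the $\varepsilon$ bound. The one genuinely fiddly point I anticipate is the behaviour near the marked points $y_i$, where two bands abut and a geodesic could in principle weave between them; I would handle this by tapering the bands to zero width at each $y_i$ (again consistent with \lemref{ham}, whose arcs $\beta, \gamma$ may be taken pinched at the endpoints of $\alpha$), so that the abutment region contributes negligibly to any length comparison.
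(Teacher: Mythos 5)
Your construction of $Q$ and the overall strategy match the paper exactly: attach thin bands along the arcs $\alpha_j$ between consecutive marked points using Lemma~\ref{ham}, collapse by a fibered homeomorphism, and bound the distortion of $d_Q$ versus $d_P$ by decomposing a geodesic into pieces lying in the small convex cells of the bands versus the central disk $E$. However, the map $h$ you describe is not a homeomorphism. You require $h$ to fix $\alpha_j$ pointwise while sending $\beta_j$ to the inner boundary of the collar $C_j$; but $int(\alpha_j)\subset int(Q)$ whereas $\alpha_j\subset\partial P$, so any homeomorphism $Q\to P$ must push $int(\alpha_j)$ into $int(P)$ and cannot fix it. Your justification of $h_Y=id_Y$ rests on this impossible feature, and more seriously the metric estimates you import from Lemma~\ref{ham} are estimates for the \emph{specific} fibered homeomorphism constructed there, not for an arbitrary collapse of the band.

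The repair is exactly the paper's choice: let $E$ be the PL disk bounded by $\cup_j\gamma_j$, take $h$ to be the identity on $E$, and on each $D(\beta_j,\gamma_j)$ use the canonical homeomorphism of Lemma~\ref{ham} onto $D(\alpha_j,\gamma_j)$, which fixes $\gamma_j$ pointwise and carries $\beta_j$ onto $\alpha_j$, displacing $\alpha_j$ into $int(D(\alpha_j,\gamma_j))$. These pieces glue along the $\gamma_j$. Since each $y_i$ is the common pinched endpoint of $\alpha_j,\beta_j,\gamma_j$ for the two abutting bands, it is fixed by each $h_j$, so $h_Y=id_Y$ holds without fixing all of $\alpha_j$. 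With that $h$ in hand, the geodesic-decomposition argument you sketch, together with the per-cell bound $\varepsilon/(2M)$ summed over the $2M$ convex quadrilaterals, gives $|d_Q(x,y)-d_P(h(x),h(y))|<\varepsilon$ as in the paper.
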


\begin{proof}
Let $\alpha _{1},...\alpha _{n}$ denote the arcs between consecutive points
of $Y$ so that $\cup \alpha _{i}=\partial P.$ Let $A$ denote the set of
vertices of $\partial P$ and let $M=\left| A\cup Y\right| .$

Apply the previous Lemma with $\delta =\frac{\varepsilon }{M}$ to each $%
\alpha _{i}$ so that $int(D(\beta _{i},\gamma _{i}))\cap int(D(\beta
_{j},\gamma _{j}))=\emptyset $ if $i\neq j.$

Let $Q=\cup D(\beta _{i},\alpha _{i})\cup P$ and let $h_{i}:D(\beta
_{i},\gamma _{i})\rightarrow D(\alpha _{i},\gamma _{i})$ be the canonical
homeomorphism (defined in Lemma \ref{ham}) and let $E$ denote the PL disk
bounded by $\cup \gamma _{i}.$

Notice we have a total of $2M$ sets of the form $T_{j}^{i}$ or $S_{j}^{i}$
and each set $T_{j}^{i}$ and each set $S_{j}^{i}$ is a convex Euclidean
quadrilateral.

In particular if $\lambda \subset Q$ is a geodesic then $\gamma \cap
T_{j}^{i}$ is a (connected) geodesic and $\gamma \cap S_{j}^{i}$ is a
(connected) geodesic and thus $l(h(\gamma \cap T_{j}^{i}))<l(\gamma \cap
T_{j}^{i})+\frac{\varepsilon }{2M}$ and $l(h(\gamma \cap
S_{j}^{i}))<l(\gamma \cap S_{j}^{i})+\frac{\varepsilon }{2M}$

In similar fashion if $\lambda \subset P$ is a geodesic then $\lambda \cap
S_{j}^{i}$ is a (connected) geodesic and $l(h^{-1}(\gamma \cap
S_{j}^{i}))<l(\gamma \cap S_{j}^{i})+\frac{\varepsilon }{2M}.$

Thus if $\lambda \subset Q$ is a geodesic we let $\lambda =\psi _{1}\cup
\psi _{2}....$ such that $\psi _{1},\psi _{2},...$ are consecutive distinct
components of either $\lambda \cap E$ or $\lambda \cap (\cup _{i}D(\beta
_{i},\gamma _{i})).$

Note if $\psi _{j}$ is a component of $\lambda \cap E$ then $h(\psi
_{j})=\psi _{j}$ and hence $l(\psi _{j})=l(h(\psi _{j}))$

If $\psi $ is a component of $\lambda \cap D(\beta _{i},\gamma _{i})$ then $%
l(h(\psi _{j}))<l(\psi _{j})+\frac{\varepsilon }{M}$ and thus $l(h(\lambda
))=\Sigma l(h(\psi _{i}))+\varepsilon $

In similar fashion if $\lambda \subset P$ is a geodesic we let $\lambda
=\psi _{1}\cup \psi _{2}....$ such that $\psi _{1},\psi _{2},...$ are
consecutive distinct components of either $\lambda \cap E$ or $\lambda \cap
(\cup _{i}D(\alpha _{i},\gamma _{i})).$

Note if $\psi _{j}$ is a component of $\lambda \cap E$ then $h^{-1}(\psi
_{j})=\psi _{j}$ and hence $l(\psi _{j})=l(h^{-1}(\psi _{j}))$

If $\psi _{j}$ is a component of $\lambda \cap D(\alpha _{i},\gamma _{i})$
then $l(h^{-1}(\psi _{j}))<l(\psi _{j})+\frac{\varepsilon }{M}$ and thus $%
l(h^{-1}(\lambda ))=\Sigma l(h(\psi _{i}))+\varepsilon .$
\end{proof}

\begin{theorem}
\label{perturb}Suppose $Q\subset R^{2}$ is a PL disk and $Y\subset \partial
Q $ is finite. Suppose $E\subset int(Q)\cup Y$ and assume $E$ is a $PL$
compactum with finitely many components, and assume $\varepsilon >0$. Then
there exists a PL disk $P\subset Q$ such that $P\cap \partial Q=Y$ and there
exists a homeomorphism $h:Q\rightarrow P$ such that if $d_{P}$ and $d_{Q}$
denote the respective $CAT(0)$ metrics on $P$ and $Q$ then $h_{E}=id_{E},$
and if $\{x,y\}\subset Q$ then $\left| d_{P}(h(x),h(y))-d_{Q}(x,y)\right|
<\varepsilon .$
\end{theorem}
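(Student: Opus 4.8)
The plan is to reduce \thmref{perturb} to \lemref{push} by first arranging that the finitely many components of $E$ are all ``boundary-like'': I want to thicken $Q$ slightly so that $E$ sits on a subarc of the boundary, and then apply \lemref{push} to push that enlarged boundary back in. More precisely, first I would handle the degenerate case: if $E\subset Y$ (so $E$ is a finite subset of $\partial Q$) then $\lemref{push}$ with the given $Y$ already does the job, since $h_Y=\mathrm{id}_Y$ forces $h_E=\mathrm{id}_E$. So the real content is when $E$ has components meeting $int(Q)$.

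The main construction step: enlarge $Q$ to a PL disk $Q'\supset Q$ by attaching, to each boundary arc of $Q$ between consecutive points of $Y$, a thin collar, chosen so that $E$ becomes a subcomplex of $\partial Q'$ — intuitively, ``cut along'' a PL spanning system of arcs connecting the components of $E$ to $\partial Q$ and to each other, and pull the cut open. After this, $E\cup Y$ is a finite subset of PL arcs in $\partial Q'$; set $Y' = $ (endpoints of these arcs)$\,\cup Y$, a finite subset of $\partial Q'$. Now apply \lemref{push} to the pair $(Q',Y')$ with tolerance $\varepsilon/2$ (or $\varepsilon$ scaled by the number of collar pieces, as in the proof of \lemref{push} itself): this produces a PL disk $P\subset Q'$ with $\partial P\cap\partial Q' = Y'$ and a homeomorphism $h':Q'\to P$ fixing $Y'$ pointwise with $|d_P(h'(x),h'(y))-d_{Q'}(x,y)|<\varepsilon/2$. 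The point of forcing $E\cup Y\subset Y'$-spanned arcs to be fixed is that $h'$ then fixes $E$ pointwise and $P$ still contains (an isometric copy of) $Q$ with $E$ inside. Finally restrict: the issue is that $\lemref{push}$ fixes only the finite set $Y'$, not the arcs of $\partial Q'$ between them. To fix \emph{all} of $E$ I should instead choose $Y'$ to be the full PL vertex set of $\partial Q'$ together with enough interpolating vertices that each component of $E$ is a single edge, and invoke the construction inside \lemref{ham}/\lemref{push} which actually fixes every cell $S^i_j$ lying in $P$ — so in fact $h'$ restricted to the ``inner'' disk $E'$ bounded by $\cup\gamma_i$ is the identity, and we arrange $E\subset E'$.

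The cleanest route, then, is: (1) by a preliminary small PL isotopy make $E$ a subcomplex of $\partial Q''$ for a slightly-fattened disk $Q''\supset Q$ with $Q\subset int(Q'')\cup Y$ still, keeping the metric change under $\varepsilon/2$ (here I use that $E$ is PL with finitely many components so the fattening can be done compatibly); (2) apply \lemref{push} to $Q''$ with marked set $Y'' = $ all vertices of $\partial Q''$ including the finitely many needed so $E$ is a union of full edges, and tolerance $\varepsilon/2$; (3) observe the resulting $P$ sits inside $Q''$, contains $E$ on $\partial P$, and $h':Q''\to P$ fixes $E$; then $h = h'|_Q$ (after identifying via step (1)) has the required properties, with the triangle inequality combining the two $\varepsilon/2$ estimates. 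I expect the main obstacle to be step (1): verifying that one can PL-fatten $Q$ to $Q''$ so that an arbitrary PL compactum $E\subset int(Q)\cup Y$ with finitely many components lands on $\partial Q''$ while controlling $d_{Q''}$ versus $d_Q$ — this is where one needs that $E$ has only finitely many components (so finitely many ``fingers'' of collar) and where the $CAT(0)$ continuity of geodesics (property 2 of $\S4$) is used to keep distances from moving more than $\varepsilon/2$. The rest is bookkeeping with the cell-by-cell length estimates already established in \lemref{ham} and \lemref{push}.
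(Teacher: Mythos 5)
Your proposal goes wrong at its central step. You propose to enlarge $Q$ to a disk $Q'$ (or isotope to $Q''$) so that $E$ ``becomes a subcomplex of $\partial Q'$,'' i.e., so that $E$ lies on a simple closed curve. But the hypothesis is only that $E\subset int(Q)\cup Y$ is a PL compactum with finitely many components: $E$ may contain $2$-dimensional pieces (e.g.\ a PL subdisk of $int(Q)$), branch points (e.g.\ a triod), or closed loops. None of these can be embedded in a circle, so no amount of collaring or fattening will make $E$ a union of edges of $\partial Q'$. This is not a technical annoyance to be handled in ``step (1)'' — for generic $E$ the step has no analogue, so the reduction never gets off the ground. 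A secondary issue: you write that applying Lemma \ref{push} to $(Q',Y')$ ``produces a PL disk $P\subset Q'$,'' but as stated that lemma takes a disk $P$ and produces a \emph{larger} disk $Q\supset P$ together with $h\colon Q\to P$; to get a smaller disk you must rerun its construction pushing the boundary inward rather than invoking the statement verbatim.

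The observation that does the work — and that you reached near the end of your first paragraph, in the phrase ``we arrange $E\subset E'$'' — is that the homeomorphism in the proof of Lemma \ref{push} is already the identity on the inner disk $E'$ bounded by $\cup\gamma_i$, i.e.\ it only moves points in the thin PL collar between $\partial Q$ and $\partial P$. The paper's proof just exploits this directly: push the components of $\partial Q\setminus Y$ inward (exactly the construction of Lemma \ref{push}, run in the inward direction), choosing the perturbation so tiny that the resulting collar is disjoint from $E\setminus Y$. That is possible precisely because $E$ is a PL compactum contained in $int(Q)\cup Y$: away from $Y$ it has positive distance to $\partial Q$, and near each $y\in Y$ the finitely many PL faces of $E$ incident to $y$ leave a PL wedge near $y$ into which the tapering collar can be fit. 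Then $h$ from Lemma \ref{push} fixes $E\cup Y$ pointwise, and the metric estimate is the one already proved there. No flattening of $E$ onto the boundary is needed, and indeed none is possible.
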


\begin{proof}
Starting with $Q$, push the components of $\partial Q\backslash Y$ inward by
a sufficiently tiny amount, obtaining a PL disk $P\subset Q$ so that so
that, recalling Lemma \ref{push}, the homeomorphism $h:Q\rightarrow P$ fixes 
$E\cup Y$ pointwise.
\end{proof}

\section{Connecting $\partial S_{n}$ to $S_{n}$ with very short arcs}

Recall we have $S_{n+1}\subset S_{n}$ and $S_{i}$ is a collection of
pairwise disjoint PL disks in $R^{2}$ such that $S_{n+1}\subset int(S_{n})$
and each component of $S_{n}$ contains at least one component of $S_{n+1}.$

Given finitely many marked points $Y_{n}\subset \partial S_{n},$ we wish
naively to connect $Y_{n}$ to $\partial S_{n+1}$ with pairwise disjoint arcs
in $S_{n}$ of minimal length. The simplest strategy almost works. If we
select the shortest possible arcs while ignoring the constraint of
disjointedness, the $CAT(0)$ geometry guarantees the selected arcs don't
cross transversely. Consequently we can perturb (arbitrarily) the selected
arcs to be disjoint, (and in our particular application all will have length
less than $\frac{1}{10^{n}}$).

\begin{lemma}
\label{treelem}Suppose $\alpha _{1},..\alpha _{n}$ are PL arcs in the plane
with common endpoint $z$ such that $\alpha _{i}\cap \alpha _{j}$ is
connected. Then $\cup _{i=1}^{n}\alpha _{i}$ is a tree.
\end{lemma}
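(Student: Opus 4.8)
The plan is to induct on $n$, the number of arcs. For $n=1$ the set $\alpha_1$ is an arc, hence a tree. For $n=2$, two PL arcs sharing the endpoint $z$ with $\alpha_1\cap\alpha_2$ connected: the intersection is a subcontinuum of each arc containing $z$, hence a subarc $[z,w]$ of each (possibly just $\{z\}$). Then $\alpha_1\cup\alpha_2$ is obtained by gluing two arcs along an initial subarc, which is easily seen to be either an arc (if $w$ is an endpoint of one of them) or a triod-like tree; in all cases it is connected, simply connected, and PL, hence a tree.

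For the inductive step, assume $W_{n-1}=\bigcup_{i=1}^{n-1}\alpha_i$ is a tree and consider adding $\alpha_n$. First I would establish that $\alpha_n\cap W_{n-1}$ is connected. This is the crux: we know each pairwise intersection $\alpha_n\cap\alpha_i$ is connected (a subarc through $z$), and each such subarc contains $z$; the union of subcontinua of the tree $W_{n-1}$ all of which contain the common point $z$ is connected, and $\alpha_n\cap W_{n-1}=\bigcup_{i<n}(\alpha_n\cap\alpha_i)$, so it is connected — in fact a subcontinuum of the arc $\alpha_n$ containing $z$, hence a subarc $J=[z,w]\subseteq\alpha_n$. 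Then $W_n=W_{n-1}\cup\alpha_n$ is the union of a tree and an arc meeting it in a subarc emanating from a point; attaching the arc $\alpha_n$ along $J$ leaves $W_{n-1}$ unchanged and glues on the remaining subarc $[w,\text{(other endpoint of }\alpha_n)]$ by one of its endpoints to the point $w\in W_{n-1}$. Gluing an arc to a tree along a single point of the arc's boundary yields a space that is again connected, simply connected (van Kampen, or directly: no new loops are created), compact, and homeomorphic to a finite union of line segments — so $W_n$ is a tree.

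The one subtlety to handle carefully is that $W_{n-1}$ is a planar PL set but not a priori known to be embedded in $R^2$ in a way compatible with the gluing — however planarity is not actually needed for the conclusion "$W_n$ is a tree" in the abstract sense used here, only that the result is homeomorphic to a finite union of segments; since each $\alpha_i$ is PL and there are finitely many of them, $\bigcup\alpha_i$ is a finite PL complex of dimension $1$, and the connectivity/simple-connectivity arguments above pin down its homeomorphism type. I expect the main obstacle to be purely bookkeeping: writing down cleanly why the union of subarcs $\alpha_n\cap\alpha_i$ (each containing $z$) is itself a subarc of $\alpha_n$, and verifying that attaching $\alpha_n$ creates no loop — i.e. that $w$ is the \emph{only} point of $\alpha_n\cap W_{n-1}$ that could close a cycle, which follows because $\alpha_n\cap W_{n-1}$ is precisely the connected subarc $J$ and not a disconnected set. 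Once that is pinned down, "tree $\cup$ arc glued along a subarc $=$ tree" is immediate from the definition of tree given in the paper.
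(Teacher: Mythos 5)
Your proof is correct and follows essentially the same inductive strategy as the paper: assume $\bigcup_{i<n}\alpha_i$ is a tree, observe that $\alpha_n\cap\bigcup_{i<n}\alpha_i=\bigcup_{i<n}(\alpha_n\cap\alpha_i)$ is a union of connected subsets of the arc $\alpha_n$ all containing the endpoint $z$, hence an initial subarc $[z,w]$, and conclude that attaching the remaining terminal piece of $\alpha_n$ at the single point $w$ produces a tree. Your presentation is in fact a bit cleaner than the paper's: it absorbs the paper's separate case (the other endpoint of $\alpha_n$ already lying in the previous union) into the general argument and replaces the paper's by-contradiction loop argument with a direct ``one-point attachment creates no cycle'' observation.
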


\begin{proof}
Note $\alpha _{1}$ is simply connected. By induction suppose $\cup
_{i=1}^{k-1}\alpha _{i}$ is simply connected. Let $\partial \alpha
_{k}=\{x,z\}.$ If $x\in \cup _{i=1}^{k-1}\alpha _{i}$ then let $x\in \alpha
_{i}$ for $i\leq k-1.$ Thus $\alpha _{k}\subset \alpha _{i}$ since, by the
hypothesis of the Lemma, $\alpha _{k}\cap \alpha _{i}$ is connected. Hence $%
\cup _{i=1}^{k-1}\alpha _{i}=\cup _{i=1}^{k}\alpha _{i}$ and by the
induction hypothesis both spaces are simply connected.

If $x\notin \cup _{i=1}^{k-1}\alpha _{i}$ let $J$ be the component of $(\cup
_{i=1}^{k-1}\alpha _{i})\backslash \alpha _{k}$ such that $x\in J.$ Let $y=%
\overline{J}\backslash J.$ Let $y\in \alpha _{i}$ for $i<k.$ Then $(\alpha
_{k}\backslash J)\subset \alpha _{i}$ since $\{y,z\}\subset \alpha _{k}\cap
\alpha _{i}.$

To obtain a contradiction suppose $\cup _{i=1}^{k}\alpha _{i}$ is not simply
connected. Let $S\subset \cup _{i=1}^{k}\alpha _{i}$ be a simple closed
curve. By the induction hypothesis $\cup _{i=1}^{k-1}\alpha _{i}$ is simply
connected and hence $\alpha _{k}\cap S\neq \emptyset $ and hence $S\cap
J\neq \emptyset .$ Now we have a contradiction since $J$ is an initial
segment of $\alpha _{k}$ and $J\cap (\cup _{i=1}^{k-1}\alpha _{i})=\emptyset
.$
\end{proof}

\begin{theorem}
\label{shrtarcs}Suppose each of the sets $S_{n}\subset R^{2}$ and $%
S_{n+1}\subset R^{2}$ is a collection finitely many pairwise disjoint closed
PL disks. Suppose $S_{n+1}\subset int(S_{n})$ and $N(S_{n},S_{n+1})<\delta
_{n}.$ Suppose $Y_{n}=\{y_{1},,.,y_{m}\}\subset \partial S_{n}$. Then there
exists a collection of pairwise disjoint closed arcs $\gamma _{1},..\gamma
_{m}$ such that $int(\gamma _{i})\subset int(S_{n})\backslash S_{n+1}$ and $%
\gamma _{i}$ connects $y_{i}$ to $S_{n+1}$ and $l(\gamma _{i})<\delta _{n}.$
\end{theorem}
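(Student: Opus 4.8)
The plan is to realize each $\gamma_i$ as (a perturbation of) the $CAT(0)$ geodesic in the component of $S_n$ containing $y_i$, running from $y_i$ to the nearest point of $S_{n+1}$, and then to invoke Lemma~\ref{notriod} and Lemma~\ref{treelem} to show these geodesics do not cross transversely, so that they can be perturbed to be pairwise disjoint. First I would fix a component $P$ of $S_n$. Since $S_{n+1}\subset int(S_n)$ and each component of $S_n$ meets $S_{n+1}$, the set $P\cap S_{n+1}$ is a nonempty closed subset of $int(P)$. Give $P$ the $CAT(0)$ metric $d_P$ determined by internal paths of minimal Euclidean length. For each marked point $y_i\in\partial P$, let $q_i\in P\cap S_{n+1}$ minimize $d_P(y_i,\cdot)$ over the (compact) set $P\cap S_{n+1}$, and let $\sigma_i$ be the unique geodesic in $P$ from $y_i$ to $q_i$. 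By the definition of $N(S_n,S_{n+1})<\delta_n$ there is some PL arc in $P$ from $y_i$ into $S_{n+1}$ of length $<\delta_n$, so $l(\sigma_i)=d_P(y_i,q_i)<\delta_n$. Also $int(\sigma_i)$ cannot meet $S_{n+1}$: if it did, the sub-geodesic from $y_i$ to that intersection point would be strictly shorter, contradicting minimality of $q_i$. Hence $int(\sigma_i)\subset int(P)\setminus S_{n+1}$ (the endpoint $y_i$ is on $\partial P$, the endpoint $q_i\in S_{n+1}$, and no interior point escapes $int(P)$ because a geodesic in $P$ stays in $P$ and the endpoints distinct from $y_i$ force it off $\partial P$ — or if a geodesic runs along $\partial P$ we first apply Theorem~\ref{perturb} to push $\partial P\setminus Y_n$ slightly inward so that geodesics from $Y_n$ leave the boundary immediately).

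Next I would establish that distinct geodesics $\sigma_i,\sigma_j$ in the same component $P$ have connected intersection. This is property~(3) of $CAT(0)$ spaces listed in the excerpt: the intersection of two geodesics is connected or empty. So $\sigma_i\cap\sigma_j$ is a (possibly degenerate) subarc. If this subarc is nondegenerate, then because geodesics are unique, $\sigma_i$ and $\sigma_j$ agree on the overlap and in particular share an endpoint structure that makes $\sigma_i\cup\sigma_j$ fail to be an embedded cross; more precisely, if two geodesics meet in a nondegenerate arc but then diverge, the union contains an embedded triod $T$ all three of whose sides are sub-paths of geodesics, hence all three geodesics — contradicting Lemma~\ref{notriod}. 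Therefore $\sigma_i\cap\sigma_j$ is at most a single point, and where they do meet at a point they do not cross transversely (a transverse crossing would again produce a triod with geodesic sides near the crossing point, contradicting Lemma~\ref{notriod}; alternatively, combine with Lemma~\ref{treelem} after adjoining a short geodesic joining the two arcs). Geodesics in different components of $S_n$ are automatically disjoint since the components are pairwise disjoint closed disks. So across all of $S_n$ the family $\{\sigma_1,\dots,\sigma_m\}$ consists of PL-approximable arcs that pairwise meet, if at all, only in isolated non-transverse points.

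Then I would perturb. Each $\sigma_i$ has length $<\delta_n$; choose $\eta>0$ so small that $m\eta$-perturbations keep all lengths below $\delta_n$ and keep $int(\sigma_i)$ inside $int(S_n)\setminus S_{n+1}$ (possible since $S_{n+1}$ is closed and $int(\sigma_i)$ is a compact set disjoint from it, and similarly $int(\sigma_i)$ is compactly contained in $int(S_n)$). Since the $\sigma_i$ only touch at finitely many non-transverse points, a standard general-position argument in the plane lets us push them apart: at each shared point, slide one arc slightly off the other within a small disk, without creating new intersections and without moving the endpoints $y_i$ or leaving the allowed region. (If one wants a first-principles argument rather than a transversality black box, one can approximate each $\sigma_i$ by a PL arc, note non-transverse PL arcs share only finitely many points/segments, and remove these overlaps one at a time by a local detour.) The result is a family of pairwise disjoint arcs $\gamma_1,\dots,\gamma_m$ with $\gamma_i$ joining $y_i$ to $S_{n+1}$, $int(\gamma_i)\subset int(S_n)\setminus S_{n+1}$, and $l(\gamma_i)<\delta_n$.

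The main obstacle is handling the geodesics that run along $\partial S_n$ or that coincide in a nondegenerate subarc and then split: the first is dealt with by the preliminary application of Theorem~\ref{perturb} (pushing $\partial S_n\setminus Y_n$ inward so geodesics from marked points immediately enter $int(S_n)$), and the second is exactly where Lemma~\ref{notriod} does the real work — it is what forbids two distinct geodesics from sharing a nondegenerate arc and rules out transverse crossings — so the bulk of the care goes into extracting an embedded triod with geodesic sides from any putative bad intersection pattern and appealing to Lemma~\ref{notriod}, after which Lemma~\ref{treelem} organizes the union into a tree and the perturbation is routine.
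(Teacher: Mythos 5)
Your overall strategy (pick internally shortest arcs from each $y_i$ into $S_{n+1}$, observe they cannot cross transversely, then perturb) is in the same spirit as the paper's, but the key justification you give does not actually work, and the paper's proof proceeds quite differently.

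The flaw is in the step where you rule out nondegenerate overlaps and transverse crossings by appealing to Lemma~\ref{notriod}. Suppose $\sigma_i$ and $\sigma_j$ overlap on a nondegenerate arc $[u,v]$ and then diverge at $u$. Form the triod near $u$ with legs $[u,a]\subset\sigma_i$, $[u,b]\subset\sigma_j$, and $[u,v']\subset[u,v]$. Of the three corner arcs, $[a,u]\cup[u,v']$ is a subarc of $\sigma_i$ and hence a geodesic, and $[b,u]\cup[u,v']$ is a subarc of $\sigma_j$ and hence a geodesic, but the third corner arc $[a,u]\cup[u,b]$ is \emph{not} a subarc of either $\sigma_i$ or $\sigma_j$, and there is no reason supplied for it to be a geodesic. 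Lemma~\ref{notriod} then simply tells you this third arc is not a geodesic, which is a conclusion, not a contradiction. Exactly the same issue arises for a transverse crossing at a single interior point: there one has four rays at $p$, and any triod taken from three of them has only two of its three corner arcs guaranteed to be geodesic. Your sentence ``the union contains an embedded triod $T$ all three of whose sides are sub-paths of geodesics'' asserts something that is not true, and the argument collapses at that point. (As a sanity check, when two of your selected arcs terminate at the same point $q$ of $S_{n+1}$ and share a final segment $[u,q]$, this is a perfectly legal configuration — indeed the paper's own construction deliberately produces it — yet your triod argument, taken at face value, would ``forbid'' it.)

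The conclusion you want (that the $\sigma_i$ do not cross transversely in $int(P)$) is in fact true, but establishing it requires two ingredients you did not supply: first, use the minimality of the $\sigma$'s to show that a ``tail-swapped'' path such as $[y_i,p]\cup[p,q_j]$ is \emph{also} a shortest arc from $y_i$ to $S_{n+1}$, hence a geodesic; and second, appeal to the uniqueness of geodesic continuation at an interior point of the PL disk (where $P$ is locally flat Euclidean). With these two facts one gets a contradiction directly, without the triod lemma at all. The paper avoids this entire issue by a different device: it does not take nearest-point geodesics and then reason about their possible intersections, but instead runs a recursive selection that \emph{redirects} each newly chosen arc so that, whenever it meets a previously chosen $\alpha_j$, it shares $\alpha_j$'s terminal segment all the way to the common terminal point $z_j$. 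This guarantees by construction that any two arcs share at most a final segment, and the arcs sharing a terminal point are then organized into a tree via Lemma~\ref{treelem} and separated using Theorem~\ref{perturb}; Lemma~\ref{notriod} is never invoked in the paper's proof of this theorem.

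Finally, a smaller point: your parenthetical fix for geodesics running along $\partial P$ (pre-applying Theorem~\ref{perturb} to push $\partial P\setminus Y_n$ inward) does not obviously ensure that geodesics from $Y_n$ subsequently avoid the boundary — after the push the boundary can still have reflex corners, which a geodesic may hug. This is not fatal (the final perturbation can absorb it) but it is another place where the details are not pinned down.
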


\begin{proof}
The strategy is to first recursively select arcs of minimal length $\alpha
_{1},..\alpha _{m}$ in $S_{n}$ such that $\alpha _{i}$ connects $y_{i}$ to $%
S_{n+1}.$ Unfortunately it can happen that $\alpha _{i}\cap \alpha _{j}\neq
\emptyset .$ However, by virtue of our construction, if $\alpha _{i}\cap
\alpha _{j}\neq \emptyset $ then $\alpha _{i}\cap \alpha _{j}$ is a final
segment of each of $\alpha _{i}$ and $\alpha _{j}$. Consequently we can
perturb the arcs $\{\alpha _{i}\}$ to very short pairwise disjoint arcs $%
\{\gamma _{i}\}.$

Recall each component of $S_{n}$ admits a canonical $CAT(0)$ metric
determined by minimal Euclidean path length between points.

By compactness of $S_{n},$ there exists an arc $\alpha _{1}\subset S_{n}$ of
minimal length connecting $y_{1}$ to $\partial S_{n+1}.$ Let $\partial
(\alpha _{1})=\{y_{1},z_{1}\}$. We proceed recursively as follows.

Suppose the geodesic arcs $\alpha _{1},,\alpha _{i-1}\subset S_{n}$ have
been chosen so that if $k\leq i-1$ then $\alpha _{k}$ is a path of minimal
length connecting $y_{k}$ to $\partial S_{n+1}$ at $z_{k}$ and suppose if $%
k<j\leq i-1$ and if $\alpha _{k}\cap \alpha _{j}\neq \emptyset $ then $%
z_{k}=z_{j}.$

Let $\alpha _{i}^{\ast }$ be a minimal arc connecting $y_{i}$ to $\partial
S_{n+1}$ in $S_{n}.$ If $\alpha _{i}^{\ast }\cap \alpha _{j}=\emptyset $ for
all $j<i$ then let $\alpha _{i}=\alpha _{i}^{\ast }$ and notice the
induction hypothesis holds for $\{\alpha _{1},..,\alpha _{i}\}.$

If there exists $j<i$ such that $\alpha _{i}^{\ast }\cap \alpha _{j}\neq
\emptyset $ let $\alpha _{i}^{\ast }\cap \alpha _{j}=[x,y]$ with $y_{j}\leq
x\leq y\leq z_{j}.$ Notice $l([x,z_{i}])=l([x,z_{j}])$ since otherwise one
of $\alpha _{j}$ or $\alpha _{i}$ could be strictly shortened contradicting
minimality. Define $\alpha _{i}=[y_{i},x]\cup \lbrack x,z_{j}].$ Note $%
l(\alpha _{i}^{\ast })=l(\alpha _{i})$ and hence $\alpha _{i}$ has minimal
length. If $\alpha _{i}\cap \alpha _{k}\neq \emptyset $ then, by definition
of $\alpha _{i}$ and the induction hypothesis we have $z_{i}=z_{j}=z_{k}.$

Finally, note since $N(S_{n},S_{n+1})<\delta _{n},$ that $l(\alpha
_{i})<\delta _{n}$ for all $i.$

To perturb the arcs $\{\alpha _{i}\},$ first notice the collection $\{\alpha
_{i}\}$ is naturally partitioned via the equivalence relation $\alpha _{i}%
\symbol{126}\alpha _{j}$ if and only if $z_{i}=z_{k}.$

Thus for each equivalence class $[z_{i}]$ consider the arcs $\beta
_{1}^{i},...,\beta _{k}^{i}\subset \{\alpha _{1},,,\alpha _{m}\}$ such that $%
\beta _{j}^{i}$ connects $y_{j}^{i}$ to $z_{i}.$ Note if $z_{k}\neq z_{i}$
then $(\cup _{j}\beta _{j}^{i})\cap (\cup _{j}\beta _{j}^{k})=\emptyset $
since if $z_{i}\neq z_{k}$ then $\alpha _{i}\cap \alpha _{k}=\emptyset .$

Fixing $i,$ note $\{y_{j}^{i}\}$ inherits a canonical circular order from
the simple closed curve component of $\partial S_{n}.$ Now we will select a
`starting point' $y\in \{y_{j}^{i}\}$ as follows.

Let $D_{i}$ denote the component of $S_{n+1}$ such that $z_{i}\in D_{i}.$
Notice $z_{i}\in \partial D_{i}$ since $\beta _{j}^{i}$ has minimal length.

Let $T^{i}=\cup \beta _{j}^{i}.$ By Lemma \ref{treelem} $T^{i}$ is a finite
PL tree, and, by hypothesis, if $k\neq i$ then $T^{i}\cap T^{k}=\emptyset .$

Fix an arbitrary point $w\in \partial D_{i}\backslash z_{i}.$ Start at $w$
and travel clockwise along $\partial D_{i}$ and stop at $z_{i}.$ Then travel
monotonically along $T^{i},$ turning `left' whenever possible and stop at $%
y\in Y_{n},$ and we have canonically obtained a `starting point' \ from our
circularly ordered set $\{y_{j}^{i}\}$.

Now, keeping $i$ fixed and permuting $j,$ reindex $\{\beta _{j}^{i}\}$such
that $y=y_{1}^{i}<...<y_{m}^{i}$, ordered in clockwise fashion on $\partial
S_{n}.$

Theorem \ref{perturb} ensures we can obtain arbitrarily small perturbations
of $\{\beta _{j}^{i}\}$ to obtained the desired arcs $\{\gamma _{j}^{i}\}.$
\end{proof}

\section{\label{alg}Connecting cellular sets in PL disks with short arcs}

Recall at the $nth$ stage of our construction we have $S_{n+1}\subset S_{n}$
and $S_{i}$ is a collection of pairwise disjoint PL disks such that $%
S_{n+1}\subset int(S_{n})$ and each component of $S_{n}$ contains at least
one component of $S_{n+1}.$ At this stage we also have finitely many arcs
pairwise disjoint PL arcs $\gamma _{1},\gamma _{2},...$ connecting $\partial
S_{n}$ to $S_{n+1}$ such that $l(\gamma _{i})<\frac{1}{10^{n}}$ and such
that $int(\gamma _{i})\subset S_{n}\backslash S_{n+1}.$

Note each component $D_{j}\subset S_{n+1}\cup \gamma _{1}\cup \gamma _{2}...$
is a PL cellular set.

Our naive hope is to attach closed disjoint arcs $\alpha _{1},\alpha _{2},,,$%
of minimal length to $\cup D_{j}$ in order create one cellular continuum in
each component of $S_{n},$ and we also hope that $\alpha _{i}\cap \gamma
_{j}=\emptyset .$

The simplest strategy almost works. If we begin connecting together the sets 
$D_{1},D_{2},..$ with minimal length arcs $\alpha _{1},\alpha _{2},..$,
without regard to whether the newly selected arcs $\{\alpha _{n}\}$ are
disjoint, ultimately the $CAT(0)$ structure on the components of $S_{n}$
ensures our newly selected arcs do not cross transversely.

However there are two technical problems with the output arcs $\alpha
_{1},\alpha _{2},..$

As mentioned, the first problem is the arcs $\{\alpha _{n}\}$ are typically
not disjoint, however this can be fixed by arbitrarily small perturbations,
since the arcs $\{\alpha _{n}\}$ do not cross each other transversely.

The second problem is that $\alpha _{i}\cap \gamma _{j}\neq \emptyset $ can
happen, yet we were hoping that $\alpha _{i}\cap \gamma _{j}=\emptyset $ for
all $i$ and $j.$ This problem can be fixed by a perturbation on the order of 
$\frac{1}{10^{n}},$ since $l(\gamma _{i})<\frac{1}{10^{n}},$ and the idea is
to `slide' the arcs $\alpha _{i}$ off of $\gamma _{j}$ (in a tiny
neighborhood of $\gamma _{j}$) so that $\alpha _{i}\subset S_{n}$ connects
distinct PL disks from $S_{n+1}.$

By construction the originally selected arcs $\alpha _{1},\alpha _{2},...$%
are `short' and have length at most $2M(S_{n},S_{n+1})$ ( double the
Hausdorff (using path length) distance between $S_{n},S_{n+1}$).

After two or three perturbations of the arcs $\{\alpha _{n}\}$ we have arcs $%
\{\alpha _{n}^{\ast \ast \ast }\}$ such that $l(\alpha _{n}^{\ast \ast \ast
})<2M(S_{n},S_{n+1})+2N(S_{n},S_{n+1})$ with all the desired properties,
namely $\alpha _{n}^{\ast \ast \ast }$ connects distinct components of $%
S_{n+1}$ and $int(\alpha _{n}^{\ast \ast \ast })\subset int(S_{n}),$ and $%
\alpha _{i}^{\ast \ast \ast }\cap \gamma _{j}=\emptyset $ for all $i$ and $%
j. $

Combining all the constructions and perturbations in this section \ref{alg}
(and its subsections), we obtain the following theorem, ultimately critical
to the recursive process by which we will attach a null sequence of arcs to
an arbitrary planar compactum.

\begin{theorem}
\label{main3}Suppose $P\subset R^{2}$ is a closed PL disk and $%
E_{1},E_{2},..,E_{n}$ are pairwise disjoint closed PL disks such that $\cup
E_{i}\subset int(P)$. Suppose $\{y_{1},...,y_{m}\}\subset P$ and suppose $%
\gamma _{1},\gamma _{2},..,\gamma _{m}$ is a collection of pairwise disjoint
closed PL arcs such that $\gamma _{i}$ connects $y_{i}$ to $\partial (\cup
E_{i})$ and such that $l(\gamma _{i})<N(P,\cup E_{i})$ and such that $%
int(\gamma _{i})\subset P\backslash (\cup E_{i}).$ Then there exist finitely
many pairwise disjoint PL closed arcs $\alpha _{1}^{\ast \ast \ast },\alpha
_{2}^{\ast \ast \ast },...$ $\subset P,$ such that $l(\alpha _{i}^{\ast \ast
\ast })<2(M(P,\cup E_{i})+N(P,\cup E_{i}))$ and such that $\alpha _{i}^{\ast
\ast \ast }$ connects distinct components of $\cup E_{i},$ such that $%
int(\alpha _{i}^{\ast \ast \ast })\subset P\backslash ((\cup E_{i})\cup
(\cup \gamma _{j})),$ and such that $\{\cup \alpha _{i}^{\ast \ast \ast
}\}\cup \{\cup \gamma _{j}\}\cup \{\cup E_{k}\}$ is cellular.
\end{theorem}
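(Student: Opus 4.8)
The plan is to run a greedy "connect-the-components" process inside each component of $P$, using the $CAT(0)$ metric $d_P$ to control the geometry, and then to clean up the output with a bounded number of explicit perturbations. I would phrase the construction recursively: given the current collection of cellular PL pieces (initially the components $D_j$ of $(\cup E_i)\cup(\cup\gamma_j)$), pick the pair of pieces that are $d_P$-closest, join them by a geodesic arc in $P$ realizing that distance, amalgamate, and repeat until each component of $P$ contains exactly one piece. Since each $D_j$ meets $\cup E_i$ and $N(P,\cup E_i)$ bounds the distance from $\partial P$ into $\cup E_i$, every geodesic selected this way has length at most $2M(P,\cup E_i)$: any point of $P$ is within $M$ of $\cup E_i$ along a path in $P$, so two pieces are never farther apart than $2M$. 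This gives the length bound on the raw arcs $\alpha_i$.

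**Why the arcs do not cross transversely.** This is the heart of the argument and where the $CAT(0)$ geometry does the work. Each $\alpha_i$ is a geodesic in (a component of) $P$, and by property (3) of $CAT(0)$ spaces the intersection of two geodesics is connected or empty. More to the point, I would argue exactly as in the proof of \thmref{shrtarcs}: if at some stage two selected geodesics met, minimality of lengths forces their intersection to be a common sub-geodesic, so after replacing endpoints appropriately the union of all arcs incident to a given amalgamated piece forms a finite PL tree by \lemref{treelem} (its arcs pairwise meet in connected sets and share endpoints on the piece). Crucially, \lemref{notriod} is what rules out a genuine transverse crossing: a transverse crossing of two geodesics through an interior point of $P$ would produce an embedded triod all three of whose sides are geodesics, which is impossible. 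Hence the raw arcs $\{\alpha_i\}$ form, together with the $D_j$, a configuration of trees with no transverse self-intersections, and \thmref{perturb} lets us perturb $\{\alpha_i\}$ by an arbitrarily small amount to a pairwise disjoint PL collection $\{\alpha_i^{\ast}\}$ with $int(\alpha_i^{\ast})\subset int(P)$ and lengths still below $2M+\epsilon$ for any preassigned tiny $\epsilon$; absorb the $\epsilon$ into a first instalment of the slack $2N$.

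**Removing intersections with the $\gamma_j$.** The second perturbation slides the $\alpha_i^{\ast}$ off the arcs $\gamma_j$. Since $l(\gamma_j)<N(P,\cup E_i)$, in a thin tubular neighborhood of $\cup\gamma_j$ I can push each $\alpha_i^{\ast}$ across, at the cost of adding at most $2N(P,\cup E_i)$ to its length (traversing along and back near a $\gamma_j$), and arrange that the reattachment point lands on $\partial(\cup E_i)$ rather than on a $\gamma_j$ — so the new arc $\alpha_i^{\ast\ast}$ genuinely connects components of $\cup E_i$. A final perturbation restores pairwise disjointness among the $\{\alpha_i^{\ast\ast}\}$ (again possible because the slides can be taken non-transverse, or simply because general position in the plane suffices once the arcs are already PL and short), yielding $\{\alpha_i^{\ast\ast\ast}\}$ with $int(\alpha_i^{\ast\ast\ast})\subset P\setminus((\cup E_i)\cup(\cup\gamma_j))$ and $l(\alpha_i^{\ast\ast\ast})<2(M+N)$.

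**Cellularity of the union.** Finally I would check that $\{\cup\alpha_i^{\ast\ast\ast}\}\cup\{\cup\gamma_j\}\cup\{\cup E_k\}$ is cellular, i.e. a nonseparating planar continuum. Each $E_k$ is cellular; attaching the disjoint PL arcs $\gamma_j$ (with interiors in the complement) keeps the set cellular and connected within each "cluster"; and the tree of arcs $\alpha_i^{\ast\ast\ast}$ added in each component of $P$ was designed precisely to connect the clusters up into one continuum per component of $P$ without creating a loop — since the greedy amalgamation adds one arc each time it reduces the number of pieces by one, the resulting graph is a tree of disks, hence simply connected, hence nonseparating. The main obstacle is the non-transversality claim for the greedily-selected geodesics: making precise that the greedy minimal-length selection keeps every pairwise intersection a common final segment (so \lemref{treelem} applies and \lemref{notriod} excludes crossings) requires care in the induction, exactly paralleling \thmref{shrtarcs} but now with the moving target of amalgamated pieces rather than a fixed $S_{n+1}$; once that is in hand, the three perturbations and the cellularity bookkeeping are routine.
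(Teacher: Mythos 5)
Your overall plan matches the paper's: select minimal-length geodesics in the $CAT(0)$ metric $d_P$ to successively amalgamate the components of $(\cup E_i)\cup(\cup\gamma_j)$, argue the raw arcs do not cross transversely, then clean up with perturbations and a push off the $\gamma_j$. The length bound $2M$ for the raw arcs and $2M+2N$ after the slide is also the right bookkeeping. However, there is a genuine gap at exactly the spot you flag as the ``heart of the argument.''

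The assertion that ``a transverse crossing of two geodesics through an interior point of $P$ would produce an embedded triod all three of whose sides are geodesics'' is false as stated, and \lemref{notriod} does not by itself exclude transverse crossings. In a $CAT(0)$ disk two geodesics certainly can cross transversely (two diameters of a round disk do). If $\alpha=[a,b]$ and $\beta=[c,d]$ cross at $x$, the triod with legs $[a,x]$, $[b,x]$, $[c,x]$ has only one side, namely $\alpha$, that is known to be a geodesic; the sides $[a,x]\cup[x,c]$ and $[b,x]\cup[x,c]$ are concatenations of geodesics and need not be geodesics. What the paper actually proves (in the theorem ``For each $k<n$ there exists $\alpha_k\in P_k$\ldots'') is that for the \emph{selected, minimal-length} arcs a transverse crossing is impossible, and this requires combining \lemref{notriod} with the minimality of the arcs: one first uses minimality within the appropriate candidate sets $C_i$, $C_k$ (tracking carefully which components of $F_i$, $F_k$ contain $a,b,c,d$) to force the relevant rearranged sides to also have minimal length, hence to be geodesics, and only then invokes \lemref{notriod} to derive a contradiction. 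This is a multi-case argument (cases A1, A2a, A2b, A3, B, C, D in the paper) considerably more delicate than the argument in \thmref{shrtarcs}, where the rigid boundary-to-boundary geometry makes the intersections automatically final segments; your remark that the induction ``exactly parallels \thmref{shrtarcs}'' underestimates the difficulty, since the amalgamated pieces and free endpoint pairs destroy that rigidity. Without the minimality-plus-\lemref{notriod} case analysis, the claim that the selected arcs and $D_j$'s form trees with no transverse self-intersections is unsupported, and the subsequent perturbation steps have nothing to stand on. (A second, more minor slip: \thmref{perturb} concerns perturbing a PL \emph{disk} isometrically, not perturbing a family of non-transverse arcs to disjoint position; the paper's arc perturbations are carried out by the separate lemmas of section \ref{perturbarc}, after first establishing that each $\alpha_i$ contains an interior point missed by all the others.)
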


\subsection{\label{arcselect}An arc selection algorithm}

The \textbf{input} for the algorithm is the data $P,D_{1},..D_{n}$
satisfying the following two conditions:

1) $P\subset R^{2}$ is a closed PL disk.

2) $\{D_{i}\}$ is a collection of pairwise disjoint cellular sets such that $%
\cup _{i=1}^{n}D_{i}\subset P$.

Consider the topologically compatible $CAT(0)$ metric $d:P\times
P\rightarrow \lbrack 0,\infty )$ satisfying $\forall M\geq 0,$ $d(x,y)\leq M$
iff there exists a PL path in $P$ connecting $x$ to $y$ of Euclidean
pathlength $M$ or less.

Starting at $k=1$ select arcs $\alpha _{k}\subset P$ recursively as follows.

Let $F_{k}=D_{1}\cup D_{2}..\cup D_{n}\cup \alpha _{1}...\alpha _{k-1}.$

If $F_{k}$ is connected terminate the algorithm.

If $F_{k}$ is not connected, let $C_{k}$ denote the set of all paths $\beta $
in $P$ such that $\beta $ connects distinct components of $F_{k}$ and such
that the endpoints of $\beta $ belong to distinct components of $\cup
_{i=1}^{n}D_{i}$.

Let $P_{k}$ denote the set of all $g\in C_{k}$ such that $g$ has minimal
length. Note $P_{k}\neq \emptyset $ since $F_{k}$ and $\cup _{j=1}^{n}D_{j}$
are compact.

If possible, select $\alpha _{k}\in P_{k}$ such that for all $i<k,$ $\alpha
_{i}$ $\cap \alpha _{k}$ does not disconnect $\alpha _{k}.$ Otherwise
terminate the algorithm.

The algorithm eventually terminates since $F_{0}$ has finitely many
components and $F_{k-1}$ has strictly fewer components than $F_{k}.$

\begin{lemma}
Suppose $\alpha _{1},..\alpha _{k}$ have been selected by the previous
algorithm. Suppose $M(P,\cup D_{i})<\varepsilon .$ Then for each $i,$ $%
l(\alpha _{i})<2\varepsilon $ and $int(\alpha _{i})\subset P\backslash (\cup
_{k=1}^{n}D_{k}).$ If $i<k$ then $l(\alpha _{i})\leq l(\alpha _{k}).$
\end{lemma}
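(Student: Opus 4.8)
The plan is to prove the three assertions in order, each reducing to a short geometric observation about minimal-length arcs in the $CAT(0)$ metric $d$ on $P$. The bound $l(\alpha_i)<2\varepsilon$ comes from the triangle-like structure: since $M(P,\cup D_i)<\varepsilon$, every point of $P$ — in particular every point on $\partial D_s$ — is joined inside $P$ to some point of $\cup D_i$ by a path of length $<\varepsilon$. The inclusion $int(\alpha_i)\subset P\setminus(\cup_k D_k)$ is a minimality argument: if an interior point of $\alpha_i$ met some $D_\ell$, one could excise an initial or terminal subarc and still connect distinct components of $F_i$ whose endpoints lie in distinct $D_j$'s, contradicting that $\alpha_i$ was chosen of minimal length in $C_i$. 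The monotonicity $l(\alpha_i)\le l(\alpha_k)$ for $i<k$ is built into the recursion.

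\textbf{Step 1: length bound.} Fix $i$ and let $\alpha_i$ connect components $F',F''$ of $F_i$ with endpoints in distinct $D_{j'},D_{j''}$. Since $D_{j'}$ is one component of $\cup D_k$ and $F_i\supset \cup D_k$, every other $D_{j''}$ lies in a (possibly the same) component of $F_i$ as $D_{j'}$ only if it has already been joined; in any case, because $F_i$ is disconnected and the $D_k$ are among its pieces, there exist two distinct components $D_a,D_b$ of $\cup D_k$ lying in distinct components of $F_i$. Pick any $p\in D_a$. By $M(P,\cup D_k)<\varepsilon$ there is a PL path in $P$ from $p$ to a point of $\cup D_k$ of length $<\varepsilon$; its far endpoint lies in some $D_c$. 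If $D_c$ is in a different component of $F_i$ than $D_a$, this single path already connects distinct components of $F_i$ with endpoints in distinct $D$'s, so $l(\alpha_i)<\varepsilon<2\varepsilon$. Otherwise concatenate two such short paths (one from $D_a$ outward, one reaching back into a component distinct from $D_a$'s), using that $\cup D_k$ meets at least two $F_i$-components; the concatenation has length $<2\varepsilon$ and lies in $C_i$, so $l(\alpha_i)\le 2\varepsilon$, hence $<2\varepsilon$ after noting strictness can be arranged (or simply keep $\le$; the paper writes $<$, which follows since the two excursions cannot both realize the supremum $\varepsilon$).

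\textbf{Step 2: interior avoids $\cup D_k$.} Suppose some $x\in int(\alpha_i)$ lies in $D_\ell$. Write $\alpha_i=\alpha_i'\cup\alpha_i''$ split at $x$. The endpoints of $\alpha_i$ lie in distinct components $D_{j'},D_{j''}$ of $\cup D_k$; at least one of them, say $D_{j'}$, is distinct from $D_\ell$. Then $\alpha_i'$ (the subarc from that endpoint to $x$) connects $D_{j'}$ to $D_\ell$, two distinct components of $\cup D_k$ hence of the relevant pieces of $F_i$, and $l(\alpha_i')<l(\alpha_i)$, contradicting minimality in $C_i$ — unless $\alpha_i'$ fails to connect distinct components of $F_i$, i.e. $D_{j'}$ and $D_\ell$ already lie in the same component of $F_i$; but then $\alpha_i''$ connects $D_\ell$ to $D_{j''}$, and by the choice that $\alpha_i$'s endpoints are in distinct $F_i$-components, $\alpha_i''$ is a strictly shorter competitor. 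Either way we contradict $l(\alpha_i)=\min_{C_i}l$. Hence $int(\alpha_i)\cap(\cup_k D_k)=\emptyset$.

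\textbf{Step 3: monotonicity.} For $i<k$, $F_k\supset F_{i+1}\supseteq F_i\cup\alpha_i$, so every path connecting distinct components of $F_k$ (with endpoints in distinct $D$'s) also connects distinct components of $F_i$ with endpoints in distinct $D$'s, giving $C_k\subseteq C_i$; since $\alpha_i$ realizes the minimum over $C_i$ and $\alpha_k\in C_k\subseteq C_i$, we get $l(\alpha_i)\le l(\alpha_k)$. \textbf{Main obstacle.} The only delicate point is Step 2's bookkeeping: one must track carefully which $D_j$-components have merged inside $F_i$ versus inside $\cup D_k$, since the defining condition of $C_i$ mixes "distinct components of $F_i$" with "distinct components of $\cup D_j$." I expect to spend most of the argument making that case analysis airtight; the length estimate and monotonicity are routine once the right competitor paths are written down.
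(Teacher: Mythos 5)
Your Steps 2 and 3 are sound: Step 3 is the paper's own observation (the containment $C_k\subseteq C_i$), and Step 2 is a minimality argument bookkept slightly differently from the paper's version (which isolates a single component $J$ of $\alpha_i\cap(P\setminus(A\cup B))$ and shows $J=int(\alpha_i)$) but reaches the same conclusion by the same underlying idea.

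Step 1, however, has a genuine gap. You fix $p\in D_a\subset\cup D_k$ and invoke $M(P,\cup D_k)<\varepsilon$ to obtain a short path from $p$ to $\cup D_k$; but $p$ already lies in $\cup D_k$, so the shortest such path is a point, its far endpoint is $p$ itself in $D_a$, and the first branch of your dichotomy is never triggered. You then fall into the ``otherwise concatenate two short paths'' branch, which never produces the essential object: a point $x\in P$ admitting paths of length $<\varepsilon$ into \emph{two distinct} $F_i$-components. Without exhibiting such an $x$, the concatenation could simply return to the same $F_i$-component both times, and $l(\alpha_i)<2\varepsilon$ does not follow. The paper supplies this $x$ via a midpoint argument: fix the separation $A=E\cap(\cup D_k)$, $B=(F_i\setminus E)\cap(\cup D_k)$ determined by the component $E$ of $F_i$ containing one endpoint of $\alpha_i$, take the minimal connector $g$ from $A$ to $B$, let $x$ be its midpoint, and use $M<\varepsilon$ to get $z\in A\cup B$ with $l([x,z])<\varepsilon$; if $l([x,z])<l(g)/2$ then splicing $[z,x]$ onto the far half of $g$ produces a strictly shorter competitor in $C_i$, contradicting minimality, so $l(g)/2\le l([x,z])<\varepsilon$. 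That minimality-at-the-midpoint step is exactly what your write-up is missing. (One could alternatively argue that the sets $\{p\in P: d_P(p,\,C\cap\cup D_k)<\varepsilon\}$, indexed by the $F_i$-components $C$, form an open cover of the connected set $P$, so two must overlap and an overlap point serves as $x$; but that argument needs to be stated, and as written your Step 1 asserts the bound without proving it.)
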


\begin{proof}
Suppose $A$ and $B$ is any separation of $\cup _{j=1}^{n}D_{j}$. Since $A$
and $B$ are compact, there exists a path of minimal length $g$ connecting $A$
to $B$ and such a path must be a geodesic arc (since all nongeodesics in $P$
can be strictly shortened while keeping the endpoints fixed), and since all
nontrivial geodesics in a $CAT(0)$ space are topological arcs.

Let $x$ be the midpoint of $g$ and let $g=[a,x]\cup \lbrack x,b]$ with $%
l([a,x])=l([x,b])$ and $a\in A$ and $b\in B.$

Let $[x,z]$ be a geodesic connecting $x$ to $A\cup B$ such that $%
l([x,z])<\varepsilon .$ Since $z\in A\cup B$ wolog we may assume $z\in A.$
To obtain a contradiction assume $l([z,x])<l([a,x]).$ Then the path $%
[z,x]\cup \lbrack x,b]$ connects $A$ to $B$ and $l([z,x]\cup \lbrack
x,b])<l(g)$ contradicting the fact that $g$ has minimal length among all
such paths. Thus $l([z,x])=l([a,x])$ and hence $l(g)<2\varepsilon .$

By definition $\alpha _{i}$ connects distinct components $E$ and $G$ of $%
F_{i}.$ Let $A=E\cap (\cup _{k=1}^{n}D_{k})$ and let $B=(F_{i}\backslash
E)\cap (\cup _{k=1}^{n}D_{k}).$ It follows that $\alpha _{i}$ is an arc of
minimal length connecting $A$ and $B.$ Thus $l(\alpha _{i})<2\varepsilon .$

Let $\beta _{i}:[0,1]\rightarrow \alpha _{i}$ be a homeomorphism such that $%
\beta _{i}(0)\in A.$

To prove $int(\alpha _{i})\subset P\backslash (\cup D_{i})$ , let $t$ be
maximal such that $\beta _{i}(t)\in A.$ It follows that there exists $\delta
>0$ such that if $s\in (t,t+\delta )$ then $\beta _{i}(s)\in \alpha _{i}\cap
(P\backslash (A\cup B)).$

Let $J$ the component of $\alpha _{i}\cap (P\backslash (A\cup B))$ such that 
$\beta _{i}(t,t+\delta ))\subset J.$ It follows that the other endpoint of $%
J $ is in $B.$

Then $J=int(\alpha _{i}),$ (since otherwise $l(J)<l(\alpha _{i})$
contradicting the fact that $l(\alpha _{i})$ is minimal among arcs in $P$
connecting $A$ and $B$.

Notice if $i<k$ then $C_{k}\subset C_{i}$ and hence $l(\alpha _{i})\leq
l(\alpha _{k}).$
\end{proof}

\subsubsection{If $k\leq n-1$ then $\protect\alpha _{k}$ exists.}

It is not obvious when our algorithm terminates. In principle two selected
arcs $\alpha _{i}$ and $\alpha _{j}$ could cross transversely and thus
terminate the algorithm prematurely.

However, the Theorem in this section shows the aforementioned disaster does
not occur.

In the proof we break the hypothetical data into cases and we argue the
least obvious case first, and we exploit symmetry of the data to cut down
the number of cases to a manageable size.

Lemma \ref{notriod} is of particular importance in the least obvious cases.

\begin{theorem}
For each $k<n$ there exists $\alpha _{k}\in P_{k}$ such that for all $i<k$ $%
\alpha _{i}\cap \alpha _{k}$ does not disconnect $\alpha _{k}.$
\end{theorem}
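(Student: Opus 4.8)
The plan is to run the recursive arc-selection algorithm and show that the ``bad'' termination (no $\alpha_k$ satisfying the non-disconnection constraint exists, despite $P_k\neq\emptyset$) cannot occur when $k<n$. So fix $k<n$, assume $\alpha_1,\dots,\alpha_{k-1}$ have been selected, and pick any minimal-length arc $g\in P_k$ (which exists by compactness). If $g$ already meets the requirement — $\alpha_i\cap g$ fails to disconnect $g$ for every $i<k$ — we are done. Otherwise there is some $\alpha_i$ with $i<k$ such that $\alpha_i\cap g$ disconnects $g$. The whole argument is about surgically repairing $g$: since $g$ and $\alpha_i$ are both geodesics in the relevant component of $P$ (with its $CAT(0)$ metric), $g\cap\alpha_i$ is connected by property (3) of $CAT(0)$ geodesics. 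I would use this to conclude that $g\cap\alpha_i$ is a sub-geodesic which is an \emph{interior} segment of $g$ (if it contained an endpoint of $g$ it could not disconnect $g$), write $g=g_1\cup c\cup g_2$ where $c=g\cap\alpha_i$ and $g_1,g_2$ are the two complementary pieces carrying the two endpoints of $g$, and then reroute: replace one arm, say $g_2\cup(\text{a final segment of }\alpha_i)$, running along $\alpha_i$ to one of its endpoints $z_i$. Because $\alpha_i$ itself was minimal, the two sub-geodesics of $\alpha_i$ emanating from an endpoint of $c$ have equal length (otherwise one could shorten $g$ or $\alpha_i$), so the rerouted arc has the \emph{same} length as $g$, hence is still in $P_k$, and now it terminates on the component of $F_k$ containing $\alpha_i$ rather than crossing it.

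The subtlety is that the new arc must still connect \emph{distinct components of} $\cup_{j=1}^n D_j$ (the $C_k$ requirement), and must not itself be disconnected by some \emph{other} $\alpha_\ell$. For the first point: rerouting along $\alpha_i$ brings us to $z_i\in\alpha_i\subset F_k$, and $\alpha_i$ lies in a component of $F_k$ containing at least one $D_j$; a short extra argument (using that $\alpha_i$'s own endpoints are in distinct $D$'s) shows the endpoints of the rerouted arc still straddle two distinct $D$'s. For the second point, this is where Lemma \ref{notriod} enters and where I expect the real fight to be. If after rerouting $g$ along $\alpha_i$ the new arc is disconnected by some $\alpha_\ell$, then near the branch point $x$ (an endpoint of $c$) we have three geodesic arms meeting — pieces of $g$, of $\alpha_i$, and of $\alpha_\ell$ — forming an embedded triod; Lemma \ref{notriod} says one of its three sides is not a geodesic, hence can be strictly shortened, contradicting the minimality of whichever of $g,\alpha_i,\alpha_\ell$ that side lies in. This forces all these geodesics to ``fan out'' compatibly, and one does the repair finitely many times (once per $i<k$), each step not increasing length and strictly reducing the number of interfering $\alpha_i$'s, until one reaches an arc in $P_k$ meeting the non-disconnection constraint for all $i<k$.

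Concretely I would organize the proof by cases on the combinatorial configuration of the endpoints of $g$, $\alpha_i$, $\alpha_\ell$ relative to the components of $F_k$ and of $\cup D_j$ — which distinct $D$'s they hit, whether endpoints coincide, whether one arc's endpoint lies in the interior of another — exploiting the symmetry in swapping the roles of the two endpoints of each arc and of the indices $i,\ell$ to cut the casework down. The ``least obvious case'' flagged in the text is presumably the one where $g$, $\alpha_i$ and $\alpha_\ell$ genuinely form a triod with all three sides a priori geodesic and all three endpoints in pairwise-distinct $D$'s; that is exactly the case Lemma \ref{notriod} is built to kill, and handling it first (then deriving the easier cases as degenerations) is the cleanest route. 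The main obstacle, then, is bookkeeping: making sure the rerouting operation simultaneously preserves minimal length, preserves the ``connects two distinct $D_j$'s'' property, and strictly decreases the complexity measure (number of $i<k$ with $\alpha_i$ disconnecting the current arc), so the repair process terminates in a valid $\alpha_k$.
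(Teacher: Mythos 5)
Your overall strategy — take any minimal arc $g\in P_k$, observe $g\cap\alpha_i$ is a connected interior sub-geodesic, reroute along $\alpha_i$ using a length-preserving argument, and invoke Lemma~\ref{notriod} to forbid genuine triods — matches the core ideas of the paper's proof. However, there are two genuine gaps.

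First, your termination argument does not go through as stated. You propose a complexity measure: the number of indices $i<k$ for which $\alpha_i$ disconnects the current candidate arc, and claim each rerouting strictly decreases it. But rerouting $g$ along $\alpha_i$ sends one arm of the new arc down $\alpha_i$, and there is no reason \emph{a priori} that this cannot create fresh interference with some $\alpha_\ell$ (with $\ell\neq i$) that did not disconnect $g$ before. Ruling this out for \emph{every} $\ell<k$, $\ell\neq i$ is a much harder verification than what the paper actually performs. The paper avoids this by an extremal argument: assume the theorem fails for a minimal $k$, choose $i$ \emph{maximal} so that some $\beta\in P_k$ avoids disconnection by $\alpha_j$ for all $j<i$, and then show the rerouted arc $\alpha_k^{\hat{}}$ avoids disconnection for all $j\le i$, contradicting maximality of $i$. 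Crucially, this only requires checking the prefix $j<i$, not all $j<k$, and the checks (cases A2a, A2b) rely on the tree structure of $\{\alpha_1,\dots,\alpha_{k-1}\}$ guaranteed by the previous steps of the algorithm — i.e., each $\alpha_j\cap\alpha_i$ is an initial segment of each. The prefix-monovariant cleanly terminates; your count-of-interferences monovariant would need a separate argument that no new interferences are created on the newly attached arm, which you do not supply.

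Second, the unconditional claim that ``the two sub-geodesics of $\alpha_i$ emanating from an endpoint of $c$ have equal length'' is not automatic; the shortening candidate must belong to $C_i$ (or $C_k$), i.e., it must still connect distinct components of $F_i$ and of $\cup D_j$. The paper's cases B, C, D isolate exactly the length configurations where this component constraint matters and derives contradictions from minimality together with the placement of $a,b,c,d$ in components of $F_i$ or $F_k$. You flag ``casework on the combinatorial configuration relative to the components'' as a to-do, but that casework is where much of the proof lives — in particular, the equal-length case (the paper's case A, where Lemma~\ref{notriod} is finally applied) is the \emph{conclusion} of ruling out B, C, D, not an unconditional consequence of minimality. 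As written, your proposal conflates ``could be shortened'' with ``could be shortened to a path in $C_i$ or $C_k$,'' which is precisely the subtlety driving the paper's stratification into cases.
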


\begin{proof}
\textbf{\ }If $n=1$ the theorem is vacuously true. Suppose $n\geq 2.$ Notice 
$\alpha _{1}$ exists.

To obtain a contradiction assume the Theorem at hand is false. Choose $k<n-1$
minimal so that the theorem false.

Thus for all $\beta \in P_{k}$ there exists $j<k$ such that $\alpha _{j}\cap
\beta $ disconnects $\beta .$

Choose $i$ maximal so that there exists $\beta \in P_{k}$ such that $\beta
\cap \alpha _{j}$ does not disconnect $\beta $ for all $j<i.$

Now define $\alpha _{k}\in P_{k}$ such that $\alpha _{k}\cap \alpha _{j}$
does not disconnect $\alpha _{k}$ for all $j<i$ and such that $\alpha
_{k}\cap \alpha _{i}$ disconnects $\alpha _{k}.$

Note $\alpha _{i}\cap \alpha _{k}\neq \emptyset $ (since otherwise $\alpha
_{i}$ and $\alpha _{k}$ are disjoint and in particular $\alpha _{i}\cap
\alpha _{k}$ would fail to disconnect $\alpha _{k}.)$

Since $i\neq k,$ $\alpha _{i}\neq \alpha _{k}$ (since $\alpha _{k}$ connects
distinct components of $F_{k},$ and the continuum $\alpha _{i}$ is contained
in some component of $F_{k}$). Hence $\partial \alpha _{i}\neq \partial
\alpha _{k}$ by uniqueness of geodesics with common endpoints.

Thus $3\leq \left| \partial \alpha _{i}\cup \partial \alpha _{k}\right| \leq
4.$ We show $\left| \partial \alpha _{i}\cup \partial \alpha _{k}\right| =4$
as follows.

(If $3=\left| \partial \alpha _{i}\cup \partial \alpha _{k}\right| $ let $%
\{b\}=\partial \alpha _{i}\cap \partial \alpha _{k}.$ Thus $b\in \alpha
_{i}\cap \alpha _{k}.$ Moreover $\alpha _{i}\cap \alpha _{k}$ is a geodesic
since each of $\alpha _{i}$ and $\alpha _{k}$ is a geodesic. In particular $%
\alpha _{i}\cap \alpha _{k}$ is connected. Thus, since $b\in \alpha _{i}\cap
\alpha _{k},$ $\alpha _{i}\cap \alpha _{k}$ is an initial segment of $\alpha
_{k},$ contradicting our assumption that $\alpha _{k}\cap \alpha _{i}$
disconnects $\alpha _{k}.$)

Since $\left| \partial \alpha _{i}\cup \partial \alpha _{k}\right| =4,$ $%
(\alpha _{i}\cap \alpha _{k})\subset int(\alpha _{k})$ and $(\alpha _{i}\cap
\alpha _{k})\subset int(\alpha _{i}).$

Let $[z,y]=int(\alpha _{k})\cap int(\alpha _{i})$ and let $\alpha
_{i}=[a,z]\cup \lbrack z,y]\cup \lbrack y,b]$ and let $\alpha _{k}=[c,z]\cup
\lbrack z,y]\cup \lbrack y,d].$

Let $x\in \lbrack z,y].$Thus we have five distinct points $\{a,b,c,d,x\}.$
Moreover we see that each of $\{a,x,b,c\}$ is $\{a,x,b,d\}$ noncolinear as
follows.

(By symmetry it suffices to see that $c\cup \alpha _{i}$ is not colinear.
Let $\alpha $ be a geodesic containing $\alpha _{i}.$ Recall $c\notin
\lbrack a,b]$ and thus if $c\in \alpha _{i}$ then wolog $c<a<x$ on $\alpha .$
However $a\notin \lbrack c,x]$ and we have a contradiction.).

We begin with the hardest cases.

\textbf{A: The cases }$l[a,x]=l[b,x]$ $=l[c,x]$ or $l[a,x]=l[b,x]$ $=l[d,x]$

By symmetry it suffices to treat the case $l[a,x]=l[b,x]$ $=l[c,x].$

Let $G$ and $H$ denote the components of $F_{i}$ such that $a\in G$ and $%
b\in H.$

\textbf{Case A1. }If $c\notin G\cup H$ $\ $then $\{[a,x]\cup \lbrack
x,c],[c,x]\cup \lbrack x,b]\}\subset C_{i}$ and each have length equal to
that of $\alpha _{i}.$

By definition $[a,x]\cup \lbrack x,b]$ is a geodesic. By Lemma \ref{notriod}
, at least one of $[a,x]\cup \lbrack x,c]$ of $[c,x]\cup \lbrack x,b]$ is
not a geodesic and can be strictly shortened while keeping the endpoints
fixed, contradicting our choice of $\alpha _{i}.$

\textbf{Case A2. }Suppose $c\in G.$ Then $c\notin H.$ Let $\alpha _{k}^{%
\symbol{94}}=[a,x]\cup \lbrack x,d].$ Note $\alpha _{k}^{\symbol{94}}\in
C_{k}$ and $l(\alpha _{k}^{\symbol{94}})=l(\alpha _{k}).$ If $\alpha _{k}^{%
\symbol{94}}$ is not a geodesic then it can be strictly shortened while
keeping the endpoints fixed, contradicting our choice of $\alpha _{k}.$ Thus
we may assume that $\alpha _{k}^{\symbol{94}}$ is also a geodesic.

Suppose $j<i.$

\textbf{Case A2a. }Suppose $x\notin \alpha _{j}\cap \alpha _{k}^{\symbol{94}%
}.$

Then $(\alpha _{k}^{\symbol{94}}\cap \alpha _{j})\subset \lbrack a,x]$ or $%
(\alpha _{k}^{\symbol{94}}\cap \alpha _{j})\subset \lbrack x,d].$

If $(\alpha _{k}^{\symbol{94}}\cap \alpha _{j})\subset \lbrack a,x]$ then $%
\alpha _{k}^{\symbol{94}}\cap \alpha _{j}=\alpha _{i}\cap \alpha _{j}$ which
is an initial segment of $\alpha _{i}=[a,x]\cup \lbrack x,b]$ by induction
hypothesis. Hence $a\in (\alpha _{k}^{\symbol{94}}\cap \alpha _{j})$ and in
particular $(\alpha _{k}^{\symbol{94}}\cap \alpha _{j})$ does not separate $%
\alpha _{k}^{\symbol{94}}.$

If $(\alpha _{k}^{\symbol{94}}\cap \alpha _{j})\subset \lbrack d,x]$ then $%
\alpha _{k}^{\symbol{94}}\cap \alpha _{j}=\alpha _{k}\cap \alpha _{j}$ which
is an initial segment of $\alpha _{k}=[c,x]\cup \lbrack x,d]$ by the
induction hypothesis. Hence $d\in (\alpha _{k}^{\symbol{94}}\cap \alpha
_{j}) $ and in particular $(\alpha _{k}^{\symbol{94}}\cap \alpha _{j})$ does
not separate $\alpha _{k}^{\symbol{94}}.$

\bigskip \textbf{Case A2b. }Suppose $x\in \alpha _{j}\cap \alpha _{k}^{%
\symbol{94}}.$ Note $x\in \alpha _{i}\cap \alpha _{k}\cap \alpha _{j}.$

Since $int(\alpha _{j})\cap int(\alpha _{i})\neq \emptyset ,$ by induction
hypothesis $\alpha _{j}\cap \alpha _{i}$ is an initial segment of $\alpha
_{i}$.

If $a\in \alpha _{j}\cap \alpha _{i}$ then it follows that it follows that $%
\alpha _{k}^{\symbol{94}}\cap \alpha _{j}$ is an initial segment of $\alpha
_{k}^{\symbol{94}}$ and hence $\alpha _{k}^{\symbol{94}}\cap \alpha _{j}$
does not disconnect $\alpha _{k}^{\symbol{94}}.$

Thus we may assume henceforth that $b\in \alpha _{j}\cap \alpha _{i}.$ It
follows that $[b,x]\subset (\alpha _{j}\cap \alpha _{i})$ since $\alpha
_{j}\cap \alpha _{i}$ is a geodesic containing $\{x,b\}$

Since $int(\alpha _{j})\cap int(\alpha _{k})\neq \emptyset ,$ by induction
hypothesis $\alpha _{j}\cap \alpha _{k}$ is an initial segment of $\alpha
_{k}.$

If $c\in \alpha _{j}\cap \alpha _{k}$ then it follows that $\alpha _{k}^{%
\symbol{94}}\cap \alpha _{j}$ is an initial segment of $\alpha _{k}^{\symbol{%
94}}$ and hence $\alpha _{k}^{\symbol{94}}\cap \alpha _{j}$ does not
disconnect $\alpha _{k}^{\symbol{94}}.$

Thus we may assume henceforth that $d\in \alpha _{j}\cap \alpha _{k}.$ It
follows that $[d,x]\subset (\alpha _{j}\cap \alpha _{k})$ since $\alpha
_{j}\cap \alpha _{k}$ is a geodesic containing $\{x,d\}$

By Lemma \ref{notriod}, since $[a,x]\cup \lbrack x,d]$ and $[a,x]\cup
\lbrack x,b]$ are geodesics, the path $[b,x]\cup \lbrack x,d]$ is not a
geodesic.

On the other hand the nongeodesic $[b,x]\cup \lbrack x,d]$ is a subarc of
the geodesic $\alpha _{j}$ and we have a contradiction.

\textbf{Conclusion of case A2. }We have shown that $\alpha _{k}^{\symbol{94}%
}\in P_{k}$ and for all $j\leq k-1$ we have that $\alpha _{k}^{\symbol{94}}$ 
$\cap \alpha _{j}$ does not disconnect $\alpha _{k}^{\symbol{94}}.$ This
contradicts our original assumptions on $k$ and $i.$

\textbf{Case A3. }Suppose $c\notin G$ and $c\in H.$ Then repeat case 2 while
exchanging the roles of $a$ and $b.$

For the remaining cases, by symmetry, we lose no generality in assuming that 
$l([a,x])\leq l([b,x])$.

\textbf{B: The cases }$l[c,x]<l[a,x]$ or $l[d,x]<l[a,x].$

By symmetry we may assume $l[c,x]<l[a,x].$

Note $l[c,x]<l([b,x]).$ Recall $\alpha _{i}=[a,x]\cup \lbrack x,b].$

Let $G$ and $H$ be distinct components of $F_{i}$ such that $a\in G$ and $%
b\in H.$ Since $G\cap H=\emptyset ,$ $c\notin G\cap H.$

If $c\notin G$ then $[a,x]\cup \lbrack x,c]\in C_{i}$ and $l([a,x]\cup
\lbrack x,c])<l(\alpha _{i})$ contradicting our choice of $\alpha _{i}.$

If $c\in G$ then $c\notin H.$ Note $[c,x]\cup \lbrack x,b]\in C_{i}$ and $%
l([c,x]\cup \lbrack x,b])<l(\alpha _{i})$ and again we have a contradiction.

\textbf{C: The case }$l[c,x]>l[a,x]$ and $l[d,x]>l[a,x]$

Recall $\alpha _{k}=[c,x]\cup \lbrack x,d].$

Let $G$ and $H$ be distinct components of $F_{k}$ such that $c\in G$ and $%
d\in H.$ Since $G\cap H=\emptyset ,$ $a\notin G\cap H.$

If $a\notin G$ then $[a,x]\cup \lbrack x,c]\in C_{k}$ and $l([a,x]\cup
\lbrack x,c])<l(\alpha _{k})$ contradicting our choice of $\alpha _{k}.$

If $a\in G$ then $a\notin H.$ Note $[a,x]\cup \lbrack x,d]\in C_{k}$ and $%
l([a,x]\cup \lbrack x,d])<l(\alpha _{k})$ and again we have a contradiction.

\textbf{D: The cases }$(l[a,x]=l[c,x]\leq l[x,d])$ or $([a,x]=l[d,x]\leq
l[x,c]).$

By symmetry we assume $l[a,x]=l[c,x]\leq l[x,d].$ If $l[a,x]=l[b,x]$ then we
have treated this case already.

Thus we may assume $l[a,x]<l[b,x].$

It follows that $l[c,x]<l[d,x]$ since otherwise $l[b,x]=l[a,x]$ (since $%
l(\alpha _{i})\leq l(\alpha _{k})$).

Thus we may assume $l[c,x]<l([d,x])$. Let $G$ be the component of $F_{i}$
such that $a\in G.$

Suppose $c\notin G.$ Then $[a,x]\cup \lbrack x,c]\in C_{i}$ and $l([a,x]\cup
\lbrack x,c])<l(\alpha _{i})$ contradicting our choice of $\alpha _{i}.$

Suppose $c\in G.$ Then $[a,x]\cup \lbrack x,d]\in C_{k}$ and $l([a,x]\cup
\lbrack x,d])<l(\alpha _{k})$ contradicting our choice of $\alpha _{k}.$
\end{proof}

\begin{remark}
By construction $F_{k}$ has $n-k+1$ components, and since $\alpha _{n-1}$
exists $F_{n}=D_{1}\cup ..\cup D_{n}\cup \alpha _{1}..\cup \alpha _{n-1}$ is
connected.
\end{remark}

\subsection{\label{perturbarc}Perturbing the arcs $\{\protect\alpha _{i}\}$}

Appealing to section \ref{arcselect}, starting with a closed topological PL
disk $P\subset R^{2}$ and pairwise disjoint closed PL cellular sets $%
D_{1},..D_{n}$ (such that $D_{n}\subset P$) we have obtained a sequence of
closed arcs $\alpha _{1},,...\alpha _{n-1}$ such that $\partial \alpha
_{n}\subset \cup (\partial D_{i})$ and $int(\alpha _{i})\subset P\backslash
(\cup D_{i})$ and such that $D_{1}\cup ..\cup D_{n}\cup \alpha _{1}..\cup
\alpha _{n-1}$ is cellular (since if $A$ and $B$ are disjoint cellular
planar continua and $\beta \subset R^{2}$ is a closed arc such that $%
\partial \alpha \subset A\cup B$ and $int(\alpha )\subset R^{2}(A\cup B)$
and if $A\cup \alpha \cup \beta $ is connected then $A\cup \alpha \cup B$ is
cellular). It is also the case that $\alpha _{i}\cap \alpha _{j}$ is a
closed initial segment of $\alpha _{i}$ for all $i$ and $j.$

\subsubsection{Perturbing the interiors of \{$\protect\alpha _{i}$\}.}

Our first task is to perturb the interiors of the open arcs $int(\alpha
_{i}) $ to be disjoint, and we need the following Lemma.

\begin{lemma}
For each $i$ there exists $z_{i}\in int(\alpha _{i})$ such that $z_{i}\notin
\alpha _{j}$ for all $j\neq i.$
\end{lemma}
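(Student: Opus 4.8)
The plan is to exploit the fact, just recorded, that for every pair $i \neq j$ the intersection $\alpha_i \cap \alpha_j$ is a closed \emph{initial} segment of $\alpha_i$ (equivalently, a segment emanating from one designated endpoint of $\alpha_i$). Fix $i$, and parametrize $\alpha_i$ by a homeomorphism $\beta_i : [0,1] \to \alpha_i$ so that $\beta_i(0)$ is the endpoint from which all the intersection initial segments start. For each $j \neq i$ the set $\beta_i^{-1}(\alpha_i \cap \alpha_j)$ is either empty or a closed interval of the form $[0, t_j]$ with $t_j < 1$ (it is $< 1$, not $=1$, because $\alpha_i$ connects distinct components of $F_n$, so the far endpoint $\beta_i(1)$ lies in a component of $\cup D_k$ different from the one containing $\alpha_j$, hence $\beta_i(1) \notin \alpha_j$; also $t_j \neq 1$ since $\alpha_i \neq \alpha_j$ by uniqueness of geodesics).

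First I would set $t^* = \max\{\, t_j : j \neq i,\ \alpha_i \cap \alpha_j \neq \emptyset \,\}$, a maximum over the finite set of indices $j \in \{1,\dots,n-1\}\setminus\{i\}$ for which the intersection is nonempty (if no such $j$ exists, any interior point of $\alpha_i$ works and we are done). By the previous paragraph $t^* < 1$, and also $t^* \geq 0$; pick any $t \in (t^*, 1)$ and put $z_i = \beta_i(t)$. Then $t \in (0,1)$ gives $z_i \in \operatorname{int}(\alpha_i)$, and for every $j \neq i$ we have $z_i \notin \alpha_j$: if $\alpha_i \cap \alpha_j = \emptyset$ this is trivial, while if $\alpha_i \cap \alpha_j \neq \emptyset$ then $\beta_i^{-1}(\alpha_i \cap \alpha_j) = [0, t_j] \subset [0, t^*]$, which does not contain $t$.

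The only point needing a little care — and the place I expect the main (mild) obstacle — is justifying that $\alpha_i \cap \alpha_j$ really is an initial segment with respect to a \emph{single} endpoint of $\alpha_i$ valid simultaneously for all $j$, rather than possibly an initial segment from one end for some $j$ and from the other end for another $j$. This is exactly what is asserted in the sentence preceding the Lemma (and follows from the recursive construction in \S\ref{arcselect}, where each $\alpha_k$ was taken geodesic and $\alpha_i \cap \alpha_k$ was shown to be an initial segment of $\alpha_k$ anchored at the endpoint $z_k \in \cup \partial D_i$); once that is invoked, the argument above is immediate. Note finally that the $z_i$ produced for distinct $i$ are automatically distinct, since $z_i \in \alpha_i$ while $z_i \notin \alpha_j$ forces $z_i \neq z_j$, so the conclusion is in fact slightly stronger than stated.
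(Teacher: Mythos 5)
The proposal has a genuine gap, and it is exactly the one you flag as a ``mild obstacle'' and then wave away. The sentence preceding the Lemma says only that $\alpha_i\cap\alpha_j$ is a closed initial segment of $\alpha_i$ for each pair; it does \emph{not} say that, for a fixed $i$, all those initial segments are anchored at the same endpoint of $\alpha_i$. In the construction of \secref{arcselect} (and explicitly in Cases A2a/A2b of the preceding theorem) an earlier arc $\alpha_j$ can share a final segment with $\alpha_i$ at one end, while another arc $\alpha_k$ shares a final segment at the other end. Your appeal to ``the endpoint $z_k\in\cup\partial D_i$'' imports notation from Theorem~\ref{shrtarcs}, which concerns a different family of arcs (the $\gamma$'s connecting $\partial S_n$ to $S_{n+1}$); in \secref{arcselect} each $\alpha_k$ has two endpoints on $\cup\partial D_i$ and neither is canonically distinguished. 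Consequently, parametrizing $\alpha_i$ by $\beta_i:[0,1]\to\alpha_i$, the preimages $\beta_i^{-1}(\alpha_i\cap\alpha_j)$ need not all be of the form $[0,t_j]$: some may be $[0,t_j]$ and others $[s_j,1]$. If $\max_j t_j\ge\min_j s_j$, every interior point is covered and your $z_i$ does not exist by this reasoning. Thus taking the maximum over a finite index set and choosing $t$ slightly larger does not close the argument.

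This is precisely the configuration the paper's proof is written to exclude, and it requires the $CAT(0)$ no-triod lemma (Lemma~\ref{notriod}), not just finiteness. The paper argues by contradiction: if every interior point of $\alpha_i$ lies on some $\alpha_j$, then taking the arc $\alpha_j$ realizing the maximal segment from one endpoint $a$ (say $\alpha_i\cap\alpha_j=[a,b]$ with $b$ interior), points just beyond $b$ must be covered by some \emph{other} arc $\alpha_k$, and one shows $\alpha_k$ is forced to run out from the opposite endpoint $c$ of $\alpha_i$, giving $[b,c]\subset\alpha_k$ and $a\notin\alpha_k$. Comparing $\alpha_j$ and $\alpha_k$ then forces $\alpha_i\cup\alpha_j\cup\alpha_k$ to be a triod with all three sides geodesic, contradicting Lemma~\ref{notriod}. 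To repair your proof you would need to either (a) actually prove that all intersections with a fixed $\alpha_i$ emanate from a single endpoint (which is not true in general, only up to the contradiction the paper derives), or (b) run the same kind of two-sided analysis and invoke Lemma~\ref{notriod} to rule out the overlap. As written, the key step is assumed rather than proved.

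Your final remark (distinctness of the $z_i$) is correct but not needed for the stated Lemma.
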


\begin{proof}
To obtain a contradiction suppose the Lemma is false. For some endpoint $%
a\in \alpha _{i},$ there exist $j\neq i$ such that $\alpha _{i}\cap \alpha
_{j}\neq \emptyset $ and such that $a\in \alpha _{i}\cap \alpha _{j}.$
Choose $j$ such that $\alpha _{i}\cap \alpha _{j}$ is a maximal initial
segment of $\alpha _{i}$ such that $a\in \alpha _{i}\cap \alpha _{j}.$

Let $\alpha _{i}=[a,c].$ Let $\alpha _{j}=[a,d]$ with $\alpha _{i}\cap
\alpha _{j}=[a,b]$ and note $b\in (a,c)$ since $[a,b]$ is a proper initial
segment of $[a,c].$ Obtain a sequence $b_{n}\rightarrow b$ such that $%
b_{n}\in (b,c].$

For each $b_{n}$ obtain $k_{n}\neq i$ such that $b_{n}\in \alpha _{k_{n}}.$
\ Since we have only finitely many closed arcs $\alpha _{j}$, there exists $%
k $ and $N$ such that $[b,b_{N}]\subset \alpha _{k}.$

Recall $\alpha _{k}\cap \alpha _{i}$ is a proper initial segment of $\alpha
_{i}$ and note $a\notin \alpha _{k}\cap \alpha _{i}$ since otherwise $%
l[a,b_{N}]>l([a,b])$ contradicting our choice of $j.$

Thus $[b,c]\subset \alpha _{k}$ and recall $k\neq i.$

Let $[x,c]=\alpha _{k}\cap \alpha _{i}$ with $x\in \lbrack a,b].$ Note $%
x\neq a$ since otherwise $\alpha _{j}\cap \alpha _{i}=\alpha _{i},$
contradicting the fact that $\alpha _{i}\cap \alpha _{k}$ is a proper subset
of $\alpha _{i}.$ Since $a\in \alpha _{j}\backslash \alpha _{k},$ we know $%
[d,b]\subset \alpha _{k}$ and hence

$\alpha _{j}\cap \alpha _{k}=[x,d].$ Thus $x=b$ since otherwise $\alpha _{k}$
is not a topological arc (since $\alpha _{k}=[x,b]\cup \lbrack b,d]\cup
\lbrack b,c]$). Thus $\alpha _{i}\cup \alpha _{j}\cup \alpha _{k}$ is a
triod, contradicting Lemma \ref{notriod}.
\end{proof}

For each $i,$ select a nonempty open arc $(w_{i},v_{i})\subset int(\alpha
_{i})$ such that $[w_{i},v_{i}]\cap \alpha _{j}=\emptyset $ if $i\neq j$ and
such that $\alpha _{i}\backslash (w_{i},z_{i})$ is the union of two arcs.

Let $\beta _{1},\beta _{2},...$ $\beta _{2(n-1)}$ denote the distinct arcs
determined by $\alpha _{i}\backslash (w_{i},z_{i}).$ Notice if $\beta
_{i}\cap \beta _{j}\neq \emptyset ,$ then $\beta _{i}$ and $\beta _{j}$
share a common endpoint on $\cup (D_{i})$ (since $\alpha _{i}\cap \alpha
_{j} $ is does not disconnect $\alpha _{i}$).

Moreover, since $[w_{i},v_{i}]\cap \alpha _{j}=\emptyset $ $\beta _{j}$ is
not a subset of $\beta _{i}.$ Consequently the arcs $\beta _{1},...\beta
_{2n}$ are canonically partitioned under the equivalence relation $\beta _{i}%
\symbol{126}\beta _{j}$ if $\partial \beta _{i}\cap \partial \beta _{j}\neq
\emptyset .$

Rename the arcs $\beta _{i}$ in format $\beta _{k}^{i}$ such that $%
\{e_{k}\}\in (\cup D_{i})\cap (\cap \beta _{k}^{i}).$ Thus $e_{k}$ is the
common endpoint in a given equivalence class.

By Lemma \ref{treelem}, for each $k$, the union of the arcs $\cup \beta
_{k}^{i}$ is a tree.

Let $h:P\rightarrow P$ be a tiny homeomorphism fixing $\cup D_{i}$ pointwise
and mapping $P\backslash (\cup D_{i})$ into $int(P).$

Notice $int(h(\beta _{k}^{i}))\subset int(P).$ Consequently, (since $h(\beta
_{i})\cap h(\beta _{i}))=\emptyset $ or $h(\beta _{i})\cap h(\beta _{i}))$
is a final segment of each arc) in similar fashion to the proof of Theorem 
\ref{shrtarcs}, it is apparent we can perturb, arbitrarily nearby, the
interiors $int(\beta _{k}^{i})$ while keeping fixed the endpoints $\partial
\beta _{k}^{i},$ creating new arcs $\beta _{k}^{\ast i}$ such that $%
int(\beta _{k}^{\ast i})=int(\beta _{l}^{\ast j})=\emptyset $ unless $k=l$
and $i=j.$ We can also require that $int(\beta _{k}^{\ast j})\cap
(v_{i},w_{i})=\emptyset $ for all $k,j,i.$

Now, to obtain perturbations of the interiors of the original arcs $\{\alpha
_{i}\},$ we assemble a perturbation of $\alpha _{i}$ as the union of the 3
arcs, $[w_{i},v_{i}]$ and the corresponding arcs $\beta _{k}^{\ast j}$ such
that $w_{i}\in \beta _{k}^{\ast j},$ and $\beta _{l}^{\ast t}$ such that $%
v_{i}\in \beta _{k}^{\ast j}$ and we let $\alpha _{i}^{\ast }=\beta
_{k}^{\ast j}\cup \lbrack w_{i},v_{i}]\cup \beta _{l}^{\ast t}.$

In this fashion we can perturb the arcs $\{\alpha _{i}\}$ and obtain arcs $%
\alpha _{1}^{\ast },,..\alpha _{n-1}^{\ast }$ such that $int(\alpha
_{i}^{\ast })\cap int(\alpha _{j}^{\ast })=\emptyset ,$ and $l(\alpha
_{i}^{\ast })<2\varepsilon $ and such that $\partial \alpha _{i}=\partial
\alpha _{i}^{\ast }.$

\subsubsection{Perturbing the endpoints of $\{\protect\alpha _{i}^{\ast }\}$}

Recall we have arcs $\alpha _{1}^{\ast },,..\alpha _{n-1}^{\ast }$ such that 
$int(\alpha _{i}^{\ast })\cap int(\alpha _{j}^{\ast })=\emptyset ,$ and $%
l(\alpha _{i}^{\ast })<2\varepsilon $ and such that $\partial \alpha
_{i}^{\ast }\subset (\cup D_{i}).$

To perturb the arcs $\{\alpha _{i}^{\ast }\}$ to be disjoint, let $%
\{e_{1},..,e_{m}\}=\cup \partial \alpha _{i}^{\ast }.$

Thus $e_{k}\in D_{i}\backslash int(D_{i}).$

For each $e_{k}$ we may select a small open set $U_{k}\subset P$ such that $%
e_{k}\in U_{k}$ and such that $(\cup (\alpha _{i}^{\ast }))\cap \overline{U}$
is a tree consisting of finitely many arcs $\beta _{k}^{\ast 1},...\beta
_{k}^{\ast n_{k}}$intersecting such that $\beta _{k}^{\ast i}\cap \beta
_{k}^{\ast j}=e_{k}.$

Let $\partial \beta _{k}^{^{\ast }i}=\{e_{k}^{i},b_{k}^{i}\}$ such that $%
e_{k}^{i}=e_{k}$ for all $i.$

It is topologically apparent the arcs $\{\beta _{k}^{\ast i}\}$ admit
arbitrarily small perturbations in $\overline{U}$ (fixing $b_{k}^{i}$) into
pairwise disjoint arcs $\beta _{k}^{\ast \ast i}$ such that $int(\beta
_{k}^{\ast \ast i})\subset int(P)$ and such that $e_{k}^{\ast \ast i}\in
\cup (D_{i}).$

As before, we assemble a perturbation $\alpha _{i}^{\ast \ast }$ of $\alpha
_{i}^{\ast }$ as the union of 3 arcs, a large closed subarc of $int(\alpha
_{i}^{\ast })$ and the two perturbations of the ends $\beta _{k}^{\ast \ast
i}\cup \beta _{l}^{\ast \ast j}.$

Now we have pairwise disjoint arcs $\alpha _{i}^{\ast \ast }$ such that $%
l(\alpha _{i})<2\varepsilon .$ Moreover $\cup (D_{i})\cup \alpha _{1}^{\ast
\ast }...\cup \alpha _{n-1}^{\ast \ast }$ is a nonseparating planar
continuum as seen in the following Lemma.

\begin{lemma}
Recall the arcs $\alpha _{1},,,\alpha _{n-1}$ selected by our algorithm in
the PL disk $P$ such that $\alpha _{i}^{\ast \ast }$ connects components of $%
\cup D_{i}$. Suppose we replace each arc $\alpha _{i}$ by an arc $\alpha
_{i}^{\ast \ast }$ such that $\alpha _{i}^{\ast \ast }\cap \alpha _{j}^{\ast
\ast }=\emptyset $ and such that $\alpha _{i}$ and $\alpha _{i}^{\ast \ast }$
connect the same components of $\cup D_{i}.$ Then $\cup D_{i}\cup \alpha
_{1}^{\ast \ast }..\cup \alpha _{n-1}^{\ast \ast }$ is a nonseparating
planar continuum.
\end{lemma}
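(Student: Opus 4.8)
The plan is to argue by induction on $n$, adding one arc at a time and showing the union stays a nonseparating planar continuum. The key structural fact, quoted already in subsection \ref{perturbarc}, is: if $A$ and $B$ are disjoint cellular planar continua and $\beta\subset R^2$ is a closed arc with $\partial\beta\subset A\cup B$, $int(\beta)\subset R^2\setminus(A\cup B)$, and $\beta$ meets both $A$ and $B$, then $A\cup\beta\cup B$ is cellular. Slightly more generally, if $\beta$ has both endpoints in a single cellular continuum $A$ and interior disjoint from $A$, then $A\cup\beta$ is cellular iff $\beta$'s interior does not ``trap'' any complementary region that is bounded — but in our situation every arc $\alpha_i^{\ast\ast}$ is short (length $<2\varepsilon$) and, more to the point, the original $\alpha_i$ connects \emph{distinct} components of $F_i$, so the combinatorics of which components get joined is exactly the same for $\{\alpha_i^{\ast\ast}\}$ as for $\{\alpha_i\}$. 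So the first step is to record that, abstractly, $\cup D_i\cup\alpha_1^{\ast\ast}\cup\dots\cup\alpha_{n-1}^{\ast\ast}$ is connected: this is immediate because $\alpha_i^{\ast\ast}$ joins the same two components of $F_i$ that $\alpha_i$ did, and $F_n$ was shown connected in the Remark following the arc–selection theorem.

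Next I would set up the induction proper. Let $G_k=\cup D_i\cup\alpha_1^{\ast\ast}\cup\dots\cup\alpha_{k-1}^{\ast\ast}$, so $G_1=\cup D_i$ is a disjoint union of cellular sets. Each $\alpha_k^{\ast\ast}$ has one endpoint in some $D_a$ and the other in some $D_b$ with $a\neq b$ (since $\partial\alpha_k^{\ast\ast}\subset\cup(\partial D_i)$ and $\alpha_k$ connected distinct components of $F_k$, in particular distinct components of $\cup D_i$). Within $G_k$, let $E$ be the component containing $D_a$ and $E'$ the component containing $D_b$. Here is the one genuine subtlety: $E$ and $E'$ might be the \emph{same} component of $G_k$ — i.e.\ $\alpha_k^{\ast\ast}$ might close up a loop. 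I claim this cannot happen for the arcs coming from the algorithm: by construction $\alpha_k$ connects two \emph{distinct} components of $F_k=\cup D_i\cup\alpha_1\cup\dots\cup\alpha_{k-1}$, and since the perturbed arcs join the same pairs of $D_i$'s, the incidence graph on $\{D_1,\dots,D_n\}$ with edges $\{\alpha_1^{\ast\ast},\dots,\alpha_{k-1}^{\ast\ast}\}$ is the same graph (same edges) as with $\{\alpha_1,\dots,\alpha_{k-1}\}$; the algorithm only ever added an edge between two distinct components, so that graph is a forest, hence adding $\alpha_k^{\ast\ast}$ genuinely merges two distinct trees. Thus $E\neq E'$.

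Granting $E\neq E'$, the inductive step is: $E$ is a cellular continuum (component of $G_k$, which by the induction hypothesis is a nonseparating planar continuum, so each component is cellular), likewise $E'$; the arc $\alpha_k^{\ast\ast}$ has $\partial\alpha_k^{\ast\ast}\subset E\cup E'$ and $int(\alpha_k^{\ast\ast})\subset int(P)\setminus(G_k)$ by the construction in subsection \ref{perturbarc} (the perturbations were arranged so $int(\alpha_i^{\ast\ast})\subset int(P)$ and $int(\alpha_i^{\ast\ast})\cap int(\alpha_j^{\ast\ast})=\emptyset$, and $int(\alpha_i^{\ast\ast})\cap(\cup D_i)=\emptyset$). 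Apply the quoted splicing fact to conclude $E\cup\alpha_k^{\ast\ast}\cup E'$ is cellular. The other components of $G_k$ are untouched, so $G_{k+1}$ is again a disjoint union of cellular continua, one fewer than $G_k$. After $n-1$ steps $G_n$ has a single component and is therefore a single cellular continuum, i.e.\ a nonseparating planar continuum, as desired.

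The main obstacle, as flagged above, is the ``$E\neq E'$'' point: one must be certain the perturbations did not accidentally create a cycle among the $D_i$'s or cause an arc interior to re-enter a $D_i$. Both are handled by the bookkeeping of section \ref{alg} — the perturbed arcs join exactly the same pairs of components as the originals, and their interiors were explicitly pushed into $int(P)$ away from $\cup D_i$ and from each other — so the incidence forest is preserved and the splicing lemma applies at every stage. A secondary (routine) point is checking that ``component of a nonseparating planar continuum is cellular,'' which follows since a component of a planar continuum $C$ with $R^2\setminus C$ connected is itself non-separating (filling in the bounded complementary components of a sub-continuum can only shrink $R^2\setminus C$, and $R^2\setminus C$ was already connected).
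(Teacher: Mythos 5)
Your proof is correct and uses essentially the same inductive approach as the paper: at each step the next arc $\alpha_k^{\ast\ast}$ splices two disjoint cellular components of $G_k$ into one, via the fact that a closed arc with interior disjoint from $A\cup B$ joining disjoint cellular continua $A$ and $B$ yields a cellular continuum. The paper simply asserts that $\alpha_k^{\ast\ast}$ connects distinct components ``by construction,'' whereas you spell out the forest/incidence-graph argument justifying this, which is a useful elaboration but not a different method.
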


\begin{proof}
This follows by induction. By definition each component of $\cup D_{i}$ is
cellular. Suppose each component of $\cup D_{i}\cup \alpha _{1}^{\ast \ast
}..\cup \alpha _{k-1}^{\ast \ast }$ is cellular. By construction the arc $%
\alpha _{k}^{\ast \ast }$ connects distinct components cellular components $%
D $ and $E$ of $\cup D_{i}\cup \alpha _{1}^{\ast \ast }...\cup \alpha
_{k-1}^{\ast \ast }$ such that $\alpha _{k}^{\ast \ast }\cap (D\cup E\cup
\alpha _{1}^{\ast \ast }..\cup \alpha _{k-1}^{\ast \ast })=\partial \alpha
_{k}^{\ast \ast }.$ It is topologically apparent that $D\cup \alpha
_{k}^{\ast \ast }\cup E$ is cellular.
\end{proof}

\subsubsection{Pushing $\{\protect\alpha _{i}^{\ast \ast }\}$ off of $\{%
\protect\gamma _{j}\}$}

We assume $D_{i}$ is the union of a PL disk $E_{i}$ and finitely many
pairwise disjoint arcs $\gamma _{i}^{1},...\gamma _{i}^{n_{i}}$ such that $%
E_{i}\cap \gamma _{i}^{j}=z_{ij}\in \partial \gamma _{i}^{j}.$

Suppose $\delta >0$ such that $l(\gamma _{i}^{j})<\delta .$ We assume that $%
E_{i}\subset int(P)$ and that $int(\gamma _{i}^{j})\subset int(P)$ and $%
\partial \gamma _{i}^{j}\backslash \{z_{ij}\}\subset \partial P.$

Recall we have pairwise disjoint arcs $\alpha _{i}^{\ast \ast }$ such that $%
l(\alpha _{i})<2\varepsilon $ and $\cup (D_{i})\cup \alpha _{1}^{\ast \ast
}...\cup \alpha _{n-1}^{\ast \ast }$ is a nonseparating planar continuum.

Unfortunately it is possible that $\partial \alpha _{k}^{\ast \ast }\cap
\gamma _{i}^{j}\neq \emptyset $ and we ultimately require that the
intersection is empty.

Thus the task at hand is to slide the arcs $\alpha _{k}^{\ast \ast }$ off of
the arcs $\{\gamma _{i}^{j}\}$.

Unlike the previous perturbations ($\alpha _{i}^{\ast }$ and $\alpha
_{i}^{\ast \ast }$ could be chosen arbitrarily close to $\alpha _{i}$) \
each end of $\alpha _{i}^{\ast \ast }$ might have to move by an amount $%
\delta .$

To see how to move the arcs $\alpha _{i}^{\ast \ast },$ first notice if $%
\alpha _{i}^{\ast \ast }\cap \gamma _{j}\neq \emptyset ,$ then $%
\{z_{i}\}=\alpha _{i}^{\ast \ast }\cap \gamma _{j}$ where $z_{i}\in \partial
\alpha _{i}^{\ast \ast }.$

Since each $D_{i}$ is the union of a disk $E_{i}$ with arcs $\gamma _{i}^{j}$
attached to $\partial E_{i},$ for each $\gamma _{i}^{j}$ we can select
pairwise disjoint sets $U_{i}^{j}$ (open in $P$) such that $\gamma
_{i}^{j}\subset U_{i}^{j}$, such that $\overline{U_{i}^{j}}\cap (\gamma
_{i}^{j}\cup E_{i})$ is a topological triod (and in particular $\overline{%
U_{i}^{j}}\cap E_{i}\subset \partial E_{i}$), and such that if $\alpha
_{k}^{\ast \ast }\cap U_{i}^{j}\neq \emptyset ,$ then $\alpha _{k}^{\ast
\ast }\cap U_{i}^{j}$ is connected.

We can also demand that $\overline{U_{i}^{j}}$ is a PL topological disk, and
that for each $x\in \overline{U}_{i}^{j}\backslash \gamma _{i}^{j}$ there
exists a path in $\overline{U_{i}^{j}}\backslash \gamma _{i}^{j}$ of length
less than $\delta $ connecting $x$ to $\partial E_{i}\backslash \gamma
_{i}^{j}.$

By construction , if $\alpha _{k}^{\ast \ast }\cap \overline{U_{i}^{j}}\neq
\emptyset $ then $\alpha _{k}^{\ast \ast }\cap \overline{U_{i}^{j}}$ is an
arc with precisely one end point $a_{kij}\in \gamma _{i}^{j}$ and the other
endpoint $b_{kij}\in \partial U_{i}^{j}\backslash (\cup D_{i}).$ Hence,
working entirely within $\overline{U_{i}^{j}},$ we can replace $\alpha
_{k}^{\ast \ast }\cap \overline{U_{i}^{j}}$ with an arc $\beta
_{ki}^{j}\subset \overline{U_{i}^{j}}$ such that $l(\beta _{ki}^{j})<\delta $
and such that $b_{kij}\in \partial \beta _{ki}^{j}$ and such that the other
endpoint of $\beta _{ki}^{j}$ is on $\partial E_{i}$ and such that $%
int(\beta _{ki}^{j})\cap (\cup D_{i})=\emptyset .$ It is apparent we can
preserve the disjointness property.

Ultimately this procedure replaces the arcs $\alpha _{k}^{\ast \ast }$ by
arcs $\alpha _{k}^{\ast \ast \ast }$, the union of 3 segments $\alpha
_{k}^{\ast \ast \ast }=(\alpha _{k}^{\ast \ast }\backslash (\beta
_{ki}^{j}\cup \beta _{kl}^{t}))\cup \beta _{ki}^{j}\cup \beta _{kl}^{t}.$

By construction $l(\alpha _{k}^{\ast \ast \ast })<2\varepsilon +2\delta ,$
and $\partial \alpha _{k}^{\ast \ast \ast }\subset \cup E_{i},$ and $%
int(\alpha _{k}^{\ast \ast \ast })\subset P\backslash (\cup D_{i}),$ and $%
\alpha _{i}^{\ast \ast \ast }\cap \alpha _{j}^{\ast \ast \ast }=\emptyset $
if $i\neq j,$ and $(\cup E_{i})\cup \alpha _{1}^{\ast \ast \ast }\cup ..\cup
\alpha _{k-1}^{\ast \ast \ast }$ is a cellular planar continuum (since $%
\partial \alpha _{i}^{\ast \ast }$ and $\partial \alpha _{i}^{\ast \ast \ast
}$ connect the same components of $\cup D_{i}$).

\section{Ingredients for proof of Theorem \ref{main4}}

\subsection{Standard planar Peano continua}

We clarify the structure of planar Peano continua and observe that any
planar Peano continuum $Y$ is homotopy equivalent to a `thicker' planar
Peano continuum $X$ so that the components of $R^{2}\backslash X$ have
simple closed curve boundaries. However, the closure of this null sequence
of circles (in some sense the `boundary' of $X$) need not have locally path
connected components. Similar observations are made in Theorem 2.4.1 \cite
{CC2}.

\begin{remark}
\label{peanorem}If $X\subset R^{2}$ then $X$ is a Peano continuum if and
only if the components $\{U_{n}\}$ of $(R^{2}\cup \{\infty \})\backslash X$
form a null sequence of simply connected open sets with locally path
connected boundary. (On the one hand if $X$ is a Peano continuum then each
of the components of $R^{2}\backslash X$ is open (since $X$ is compact) and
simply connected (since otherwise $X$ fails to be connected) and $\{U_{n}\}$
is a sequence (since $R^{2}$ is separable and open sets have at most
countably many components) and $diam(U_{n})\rightarrow 0$ (since otherwise
we can select a subsequence of large subcontinua $Z_{n}\subset U_{n},$
converging in the Hausdorff metric to a large subcontinuum $Z\subset X$ and
there exists $z\in Z$ such that local connectivity of $X$ fails) and $%
\partial U_{n}$ is locally path connected (since $R^{2}\backslash U_{n}$ is
locally path connected). Conversely if $X$ enjoys all of the above
properties then $X$ is compact (since $\cup U_{n}$ is open), and connected
(since each $U_{n}$ is open and simply connected), and locally path
connected (since for large $n$, there is a small retract from $\overline{%
U_{n}\backslash u_{n}}$ onto $\partial U_{n},$ and if $\{x,y\}\subset X,$
the short segment $[x,y]$ can be modified (replacing components of $%
[x,y]\cap U_{n}$ with the image in $\partial U_{n}$ under the retraction)
creating a small path in $X$ from $x$ to $y$).
\end{remark}

\bigskip Recall if $X$ $\subset R^{2}$ is a 2 dimensional Peano continuum
then $int(X)$ is the maximal set $U\subset X$ such that $U$ is open in $%
R^{2} $ and recall the frontier $Fr(X)=X\backslash int(X).$

It is tempting to conclude that each component of $Fr(X)$ is locally path
connected. However this is generally false as seen in the following example
(see also \cite{CC2} \cite{Z2}).

\begin{example}
\label{exbad}Let $Z$ be any 1 dimensional nonseparating planar continuum
such that $Z$ is not locally path connected. Manufacture a null sequence of
pairwise disjoint simple closed curves $C_{n}\subset R^{2}\backslash Z$ such
that $\overline{\cup C_{n}}=Z\cup (\cup C_{n})$ (and such that each $C_{n}$
is an isolated subspace of the compactum $Z\cup (\cup C_{n})).$ Let $U_{n}$
denote the bounded component of $R^{2}\backslash C_{n}$ and define $%
X=R^{2}\backslash (\cup U_{n}).$ Then $X$ is a Peano continuum (since the
components of $(R^{2}\cup \{\infty \})\backslash X$ form a null sequence of
simply connected open sets with locally connected boundary) however $Z$ is a
component of $Fr(X)$ and $Z$ is not locally path connected.
\end{example}

It will prove useful to obtain a canonical form (up to homotopy equivalence)
for planar Peano continua.

\begin{definition}
Suppose $X\subset R^{2}$ is a 2 dimensional Peano continuum. Then $X$ is 
\textbf{standard} if for each component $U\subset R^{2}\backslash X,$ $%
\partial U$ is a round Euclidean circle such that $\partial U$ is isolated
in $Fr(X).$
\end{definition}

\begin{lemma}
\label{stand}Suppose $Y\subset R^{2}$ is a Peano continuum. Then there
exists a standard Peano continuum $X$ such that $Y\subset X$ and $Y$ is a
strong deformation retract of $X.$ In particular $Y$ is homotopy equivalent
to $X.$
\end{lemma}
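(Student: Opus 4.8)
The plan is to build $X$ from $Y$ by fattening $Y$ slightly and then carving out a round disk from each complementary component, in a controlled enough way that the result is standard and deformation retracts onto $Y$. By Remark \ref{peanorem}, the complement $(R^2\cup\{\infty\})\setminus Y$ is a null sequence of simply connected open sets $\{U_n\}$ with locally path connected boundaries. First I would fix, for each $n$, a small round closed disk $B_n$ whose interior lies in $U_n$ and whose radius goes to $0$ with $n$; we may also arrange the $B_n$ to be pairwise disjoint and disjoint from $Y$, and (for the ``isolated in $Fr(X)$'' requirement) choose them so that $\{\partial B_n\}$ together with $Y$ forms a compactum in which each $\partial B_n$ is isolated — concretely, take the radius of $B_n$ to shrink fast relative to the distance from $B_n$ to $Y\cup\bigcup_{m\neq n}B_m$. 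Then set
\begin{equation*}
X=\bigl(R^2\cup\{\infty\}\bigr)\setminus\bigcup_n \operatorname{int}(B_n).
\end{equation*}

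The key steps, in order, are: (1) verify $X$ is a standard Peano continuum. That $X$ is a Peano continuum is immediate from Remark \ref{peanorem}, since the complementary components of $X$ in the sphere are exactly the round open disks $\operatorname{int}(B_n)$, which form a null sequence of simply connected open sets with (round, hence locally path connected) boundary circles. That $X$ is \emph{standard} needs the isolation property of the $\partial B_n$ in $Fr(X)$, which is why the radii were chosen to shrink quickly. (2) Exhibit the strong deformation retraction $r_t:X\to X$ with $r_0=\mathrm{id}$, $r_1(X)=Y$, and $r_t|_Y=\mathrm{id}_Y$ for all $t$. The natural construction is to do the retraction ``$U_n$ by $U_n$'': inside each $U_n$ we must push $U_n\setminus\operatorname{int}(B_n)$ onto $\partial U_n$. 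Since $U_n$ is an open simply connected subset of the sphere with locally path connected boundary, $\overline{U_n}$ is a Peano continuum and $\overline{U_n}\setminus\operatorname{int}(B_n)$ is an annulus-like region collaring $\partial U_n$; one can collapse it radially onto $\partial U_n$ while fixing $\partial U_n$ pointwise. Performing these collapses simultaneously (at a rate that respects the null-sequence property so the total map is continuous — e.g. on $U_n$ run the retraction on the time subinterval dictated by $n$, or simply note uniform continuity follows from $\operatorname{diam}(U_n)\to0$) gives the desired $r_t$ on all of $X$, extended by the identity on $Y$.

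The main obstacle is step (2), the construction of the retraction on $\overline{U_n}\setminus\operatorname{int}(B_n)$: although $U_n$ is simply connected, its boundary $\partial U_n$ need not be a simple closed curve, so $\overline{U_n}$ need not be a genuine disk and there is no literal radial structure to exploit. The way around this is to use the Riemann map $\phi_n:\operatorname{int}(B_n)\to U_n$, which (since $\partial U_n$ is locally connected) extends continuously to $\overline{B_n}\to\overline{U_n}$ by Carathéodory's theorem; transporting the obvious radial retraction of a round annulus through $\phi_n$ gives a strong deformation retraction of $\overline{U_n}\setminus\phi_n(\{|z|<\rho\})$ onto $\partial U_n$, and by choosing $B_n$ from the start to be $\phi_n(\{|z|\le\rho_n\})$ pushed forward — or equivalently by choosing the carved disk inside $U_n$ to be the $\phi_n$-image of a concentric subdisk — the carved region is exactly such an annulus. (If one insists the carved disk be \emph{round} in $R^2$, a further small isotopy of the plane supported near $U_n$ converts the $\phi_n$-image disk into a round disk, at the cost of only a negligible adjustment, and this isotopy can be folded into $r_t$.) One then checks that the union of all these retractions, together with $\mathrm{id}$ on the rest of $X$, is continuous — the only place continuity could fail is at points of $Fr(X)\setminus\bigcup\partial B_n$, i.e. limit points of the circles, and there continuity holds because $\operatorname{diam}(U_n)\to0$ forces the retraction to move points near such a limit point by an amount tending to $0$. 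The final assertion, that $Y$ is homotopy equivalent to $X$, is then immediate since a strong deformation retract is in particular a homotopy equivalence.
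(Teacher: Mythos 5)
Your proposal is correct and takes essentially the same approach as the paper: both carve out of each complementary domain $U_n$ a sub-disk given as the image, under a Riemann map extended via Carath\'{e}odory's theorem (using local connectivity of $\partial U_n$), of a smaller concentric disk, and then retract the resulting collar annuli onto $\partial U_n$ simultaneously, with continuity of the global retraction coming from $\operatorname{diam}(U_n)\to 0$. Your parenthetical remark about using a small isotopy to make the carved disks genuinely round is a slightly more careful treatment of the ``round Euclidean circle'' clause in the definition of standard than the paper itself gives, but the underlying construction is the same.
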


\begin{proof}
Since $R^{2}$ is separable, the open subspace $R^{2}\backslash X$ has at
most countably many components. Moreover each component $U\subset
R^{2}\backslash X$ is simply connected, since $X$ is connected. Let $%
\{U_{n}\}$ denote the simply connected components of $R^{2}\backslash Y.$
Note, for each simply connected $U_{n},$ $\partial U_{n}$ is locally path
connected. Select $u_{n}\in U_{n}$ and note $\partial U_{n}$ is a strong
deformation retract of $U_{n}\backslash \{u_{n}\}.$ In particular we can
select a simple closed curve $C_{n}\subset U_{n}$ approximating $\partial
U_{n}$ (let $C_{n}$ denote the image of a large round circle $S_{n}\subset
int(D^{2})$ under a Riemann map $\overline{\phi }:\overline{int(D^{2})}%
\rightarrow \overline{U_{n}}$ ). Let $A_{n}\subset \overline{U_{n}}$ denote
the closure of the open annulus bounded by $C_{n}$ and $\partial U_{n}.$
Note we have a strong deformation from $A_{n}$ onto $\partial U_{n}$ with
small trajectories under the homotopy. Let $Y=X\cup (\cup A_{n}).$ Since $X$
is a Peano continuum, $diam(U_{n})\rightarrow 0.$ The space $Y$ is a Peano
continuum by Remark \ref{peanorem}. Moreover, since $diam(A_{n})\rightarrow
0 $, the union of the deformation retracts from $A_{n}\rightarrow \partial
U_{n}$ determines that $X$ is a strong deformation retract of $Y.$ By
construction $Y$ is standard.
\end{proof}

\begin{lemma}
\label{ezkill}Suppose $X\subset R^{2}$ is a standard Peano continuum and $%
V\subset R^{2}\backslash X$ is the union of some bounded components of $%
R^{2}\backslash X.$ Then $X\cup V$ is a standard Peano continuum.
\end{lemma}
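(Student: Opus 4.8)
The plan is to verify for $X\cup V$ the three things in play: that it is a Peano continuum (via Remark \ref{peanorem}), that it is $2$-dimensional, and that each component of its complement has a round-circle boundary that is isolated in its frontier. Write $\{U_n\}$ for the components of $(R^{2}\cup\{\infty\})\setminus X$ and $V=\bigcup_{n\in S}U_n$, where $S$ indexes some bounded complementary components (so the unbounded one is not in $S$).

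First I would pin down the complement. One has $R^{2}\setminus(X\cup V)=\bigcup_{n\notin S}U_n$, a pairwise disjoint union of open connected sets, so these are exactly the complementary components of $X\cup V$, with $\infty$ sitting in the one coming from the unbounded component of $R^{2}\setminus X$. This subfamily of a null sequence is again a null sequence; each surviving $U_n$ is open and is still simply connected (a property of the open set itself, as a simply connected open subset of the sphere); and $\partial U_n$ is literally unchanged, hence a round circle because $X$ is standard, hence locally path connected. By the converse direction of Remark \ref{peanorem}, $X\cup V$ is a Peano continuum; it is $2$-dimensional since $int(X)\ne\emptyset$ (as $X$ is a $2$-dimensional continuum) and $int(X)\subset int(X\cup V)$.

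Next I would compare frontiers. As $V$ is open, $V\subset int(X\cup V)$, so $Fr(X\cup V)\subset X$; and $int(X)\subset int(X\cup V)$, so $Fr(X\cup V)\cap int(X)=\emptyset$; together these give $Fr(X\cup V)\subset X\setminus int(X)=Fr(X)$. Now fix a complementary component $U_n$ of $X\cup V$. Since $\partial U_n\subset Fr(X)\subset X\subset X\cup V$ while every neighborhood of a point of $\partial U_n$ meets $U_n\subset R^{2}\setminus(X\cup V)$, no point of $\partial U_n$ is interior to $X\cup V$, so $\partial U_n\subset Fr(X\cup V)$. Finally, as $X$ is standard choose an open $N\subset R^{2}$ with $N\cap Fr(X)=\partial U_n$; then $\partial U_n\subset N\cap Fr(X\cup V)\subset N\cap Fr(X)=\partial U_n$, so $N\cap Fr(X\cup V)=\partial U_n$ and $\partial U_n$ is isolated in $Fr(X\cup V)$. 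Together with the previous paragraph, this shows $X\cup V$ is standard.

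The argument is essentially bookkeeping. The one place calling for a little care is confirming that filling in $V$ neither creates new complementary components, nor merges old ones, nor enlarges the frontier; but all three are immediate from the facts that $R^{2}\setminus(X\cup V)$ is a union of entire complementary components of $X$ and that $V$ is open. So I do not expect this lemma to present a genuine obstacle.
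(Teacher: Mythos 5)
Your proof is correct and follows essentially the same route as the paper: both verify that $X\cup V$ is a Peano continuum by checking the conditions of Remark \ref{peanorem} on the complementary components (which are exactly the $U_n$ not absorbed into $V$), and both establish isolation of the boundary circles by noting $V\subset int(X\cup V)$ and hence $Fr(X\cup V)\subset Fr(X)$. You spell out the bookkeeping a bit more explicitly than the paper does, but the underlying argument is the same.
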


\begin{proof}
Let $Y=X\cup V.$ Then if $u\in R^{2}\backslash Y$ then there exists a
component $U\subset R^{2}\backslash X$ such that $u\in U$. Thus $Y$ is
compact and it follows from Remark \ref{peanorem} that $Y$ is a Peano
continuum whose complementary domain boundaries form a null sequence of
circles $S_{1},S_{2},...$

Recall $S_{n}$ is isolated in $X\backslash int(X).$ To see that $S_{n}$ is
isolated in $Y\backslash int(Y),$ note $V\subset int(Y)$ and thus $%
Y\backslash int(Y)\subset X\backslash int(X).$
\end{proof}

\subsection{Building and recognizing homotopic maps}

Notice if $\alpha $ and $\beta $ are two unbased inessential loops in $%
R^{2}\backslash \{(0,0)\}$ such that $diam(\alpha )<\varepsilon $ and $%
diam(\beta )<\varepsilon $ then $im(\alpha )\cup im(\beta )$ might have
large diameter. However if $\alpha $ and $\beta $ are homotopic essential
loops then we are guaranteed a small homotopy between $\alpha $ and $\beta $
since both loops must stay near the `hole' at $(0,0)$. (See also Theorem 2.1 
\cite{CC2} )

The foregoing example illustrates a more general phenomenon captured by the
following 2 Lemmas.

Our proof of Lemma \ref{shape} includes both a direct argument and a
backhanded proof exploiting the nontrivial fact that the fundamental group
of a planar Peano continuum $Z$ injects into the inverse limit of free
groups (determined by $Z$ as the nested intersection of a sequence open
planar sets).

\begin{lemma}
\label{shape} Suppose $Y\subset R^{2}$ is any set and $\alpha ,\beta
:S^{1}\rightarrow Y$ are essential homotopic loops such that $diam(im(\alpha
))<\varepsilon $ and $diam(im\beta )<\varepsilon .$ Then $diam(im\alpha \cup
im\beta )<2\varepsilon .$
\end{lemma}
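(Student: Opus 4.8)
The plan is to show that if $\alpha$ and $\beta$ are homotopic essential loops of small diameter in $Y \subset R^2$, then some point of $\operatorname{im}(\alpha)$ and some point of $\operatorname{im}(\beta)$ are within distance... wait, actually we need something stronger: we need the two images together to have diameter $< 2\varepsilon$. This follows once we know that $\operatorname{im}(\alpha)$ and $\operatorname{im}(\beta)$ are not too far apart, but since each has diameter $< \varepsilon$, it would suffice to find a single point $p$ lying in (or near) both images — more precisely, if $\overline{\operatorname{im}(\alpha)}$ and $\overline{\operatorname{im}(\beta)}$ intersect, then by the triangle inequality any two points of $\operatorname{im}(\alpha) \cup \operatorname{im}(\beta)$ are within $2\varepsilon$. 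So the core claim to establish is: \emph{two homotopic essential loops in a planar set have intersecting (closures of) images}.

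First I would pass to the ``hole'' structure of the plane. Since $\alpha$ is essential in $Y$, it is essential in $R^2$ viewed as a loop avoiding nothing — no, rather: $\alpha$ being essential in $Y$ means it is non-nullhomotopic \emph{in} $Y$; but to get a topological obstruction I want to find a point $a \in R^2 \setminus Y$ around which $\alpha$ has nonzero winding number. This is the standard fact that a loop in an open (or arbitrary) planar set $Y$ is essential iff it winds nontrivially around some point of the complement; for general $Y$ one argues via the fact that $\pi_1(Y)$ injects into an inverse limit of free groups of complements of finite sets of complementary points (the ``backhanded'' argument the author alludes to), so essentiality detects a complementary point $a$ with $\operatorname{wind}(\alpha, a) \neq 0$. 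Then I would carry out the \textbf{direct argument}: suppose for contradiction that $\overline{\operatorname{im}(\alpha)}$ and $\overline{\operatorname{im}(\beta)}$ are disjoint. Since both have diameter $< \varepsilon$, there is a point $a$ in the complement of $Y$ with $\operatorname{wind}(\alpha, a) \neq 0$; because $\alpha$ and $\beta$ are homotopic in $Y \subset R^2 \setminus \{a\}$ (if $a$ is chosen in the bounded complementary region ``enclosed'' by the small loop $\alpha$, then $a \notin \operatorname{im}(\beta)$ as well, using disjointness of the two small images and a separation argument), we get $\operatorname{wind}(\beta, a) = \operatorname{wind}(\alpha, a) \neq 0$. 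But then $a$ lies in the bounded complementary component of $\operatorname{im}(\beta)$, hence within distance $< \varepsilon$ of $\operatorname{im}(\beta)$, while simultaneously $a$ is ``inside'' $\operatorname{im}(\alpha)$, hence within $< \varepsilon$ of $\operatorname{im}(\alpha)$ — forcing $\operatorname{dist}(\operatorname{im}(\alpha), \operatorname{im}(\beta)) < 2\varepsilon$ and in fact, by a more careful winding-number/separation argument, forcing the two closed images to intersect, the desired contradiction.

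The cleanest route is: let $a \in R^2 \setminus Y$ with $n := \operatorname{wind}(\alpha, a) \neq 0$. Consider $\gamma = \operatorname{im}(\alpha) \cup \operatorname{im}(\beta)$, a compact subset of $Y \subset R^2 \setminus \{a\}$. The homotopy $H$ from $\alpha$ to $\beta$ lives in $Y$, so $\operatorname{wind}(\beta, a) = n \neq 0$ too. Now $a$ lies in a bounded component of $R^2 \setminus \operatorname{im}(\alpha)$ (nonzero winding forces boundedness) and likewise in a bounded component of $R^2 \setminus \operatorname{im}(\beta)$. If $\overline{\operatorname{im}(\alpha)} \cap \overline{\operatorname{im}(\beta)} = \emptyset$, these two compacta lie in disjoint closed neighborhoods, and a point that is simultaneously ``surrounded'' by $\operatorname{im}(\alpha)$ and by $\operatorname{im}(\beta)$ would force one image to separate a point of the other from $\infty$, i.e. $\operatorname{diam}$ of one of them is at least $\operatorname{dist}(a, \infty\text{-component})$... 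I would make this precise by noting $a$ is within $\operatorname{diam}(\operatorname{im}\alpha) < \varepsilon$ of $\operatorname{im}(\alpha)$ (a point enclosed by a set of diameter $d$ is within $d$ of it) and within $< \varepsilon$ of $\operatorname{im}(\beta)$, whence $\operatorname{diam}(\operatorname{im}\alpha \cup \operatorname{im}\beta) \le \operatorname{diam}(\operatorname{im}\alpha) + \operatorname{dist}(\operatorname{im}\alpha, a) + \operatorname{dist}(a, \operatorname{im}\beta) + \operatorname{diam}(\operatorname{im}\beta)$ — wait, that overcounts; better: every point of $\operatorname{im}\alpha$ is within $\varepsilon$ of $a$ (diameter bound plus $a$ within $\varepsilon$), every point of $\operatorname{im}\beta$ within $\varepsilon$ of $a$, so all points of the union are within $\varepsilon$ of $a$, giving $\operatorname{diam} < 2\varepsilon$. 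That is the whole estimate.

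\textbf{The main obstacle} I anticipate is the very first step — producing a complementary point $a$ with nonzero winding number from the mere hypothesis that $\alpha$ is essential \emph{in $Y$} — because $Y$ is an arbitrary planar set, not open and not a Peano continuum. For open $Y$ this is classical; in general one needs exactly the tool the author flags, namely that $\pi_1$ of a planar set injects into the inverse limit over finite subsets $F \subset R^2 \setminus Y$ of $\pi_1(R^2 \setminus F)$, so that $[\alpha] \neq 1$ forces $[\alpha]$ to have nontrivial image in some $\pi_1(R^2 \setminus F) = $ free group, i.e. nontrivial winding around some point of $F$. I would either cite this or, for the self-contained ``direct'' version, restrict to the case $Y$ is (contained in) a planar Peano continuum where one can approximate $Y$ from outside by open sets with finitely generated $\pi_1$. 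A secondary subtlety is ensuring the chosen $a$ is not accidentally in $\operatorname{im}(\beta)$ or $\operatorname{im}(\alpha)$'s unbounded component relative to the other; but since $\operatorname{wind}(\beta,a)=\operatorname{wind}(\alpha,a)\neq 0$ automatically (the homotopy avoids $a$), $a$ is genuinely enclosed by both, and no further case analysis is needed.
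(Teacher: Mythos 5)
There is a genuine gap at the step you yourself flag as the ``main obstacle,'' and the fix you propose does not close it. You assert that essentiality of $\alpha$ in $Y$ produces a complementary point $a$ with $\operatorname{wind}(\alpha,a)\neq 0$, citing the injectivity of $\pi_1(Y)\to\varprojlim\pi_1(R^2\setminus F)$. But that injectivity only yields a finite set $F\subset R^2\setminus Y$ such that $[\alpha]$ is nontrivial in the free group $\pi_1(R^2\setminus F)$; it does not give a nonzero winding number, which is the \emph{abelianized} image. A commutator of tight small loops around two nearby punctures $p,q$ is essential in $R^2\setminus\{p,q\}$ with arbitrarily small diameter, yet winds trivially around every point of the plane. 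So the single witness point $a$ on which your ``cleanest route'' paragraph rests need not exist, and the whole estimate collapses.

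The paper's proof sidesteps this exactly. It produces a \emph{finite} set $E$ of complementary witness points (one from each relevant bounded complementary domain of a Peano continuum $Z_\alpha\supset\operatorname{im}(\alpha)$ of diameter $<\varepsilon$), shows $\alpha$ is essential in $R^2\setminus E$, and then argues that $\operatorname{im}(\beta)$ must meet the \emph{convex hull} $B$ of $E$ — because $R^2\setminus B$ is simply connected, so a $\beta$ avoiding $B$ would be nullhomotopic in $R^2\setminus E$. Since $B\subset\operatorname{conv}(Z_\alpha)$ has diameter $<\varepsilon$ and contains a point of $\operatorname{im}(\beta)$, the $2\varepsilon$ bound follows by the same triangle-inequality bookkeeping you wrote out at the end. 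To repair your proposal, replace ``a point with nonzero winding number'' with ``a finite $F$ in the complement with $[\alpha]\neq 1$ in $\pi_1(R^2\setminus F)$, chosen inside the filled-in image of $\alpha$,'' and replace the winding-number argument by the observation that $\beta$ cannot miss $\operatorname{conv}(F)$.
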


\begin{proof}
Note $im(\alpha )$ is a Peano continuum. Let $Z_{\alpha }$ denote the union
of $im(\alpha )$ and those components $U\subset R^{2}\backslash im(\alpha )$
such that $U\cap Y=U.$ Then $Z_{\alpha }\subset Y$ and $Z_{\alpha }$ is a
Peano continuum ( by Lemma \ref{ezkill}). Since $Z_{\alpha }\subset Y$ it
follows that $\alpha $ is essential in $Z_{\alpha }.$ Note $diam(Z_{\alpha
})<\varepsilon .$ For each bounded component $V\subset R^{2}\backslash
Z_{\alpha }$ select a point $z_{V}\in V\backslash Y$ to obtain a set $%
E\subset R^{2}\backslash (Z_{\alpha }\cup Y).$ Note $Y\subset
R^{2}\backslash E.$ Thus $\alpha $ and $\beta $ are homotopic in $%
R^{2}\backslash E.$

To see that $\alpha $ is essential in $R^{2}\backslash E$ it suffices to
prove that $Z_{\alpha }$ is a strong deformation retract of $R^{2}\backslash
E.$ (For each bounded component $V\subset R^{2}\backslash Z_{\alpha },$
notice $\overline{V\backslash z_{v}}$ can be deformation retracted onto $%
\partial V,$ and since the components of $R^{2}\backslash Z_{\alpha }$
determine a null sequence it follows, taking the union of the $SDRs,$ that $%
Z_{\alpha }$ is a strong deformation retract of $R^{2}\backslash E$).

(Alternately we can obtain a finite set $E_{n}\subset E$ such that $\alpha $
is essential in $R^{2}\backslash E_{n}$ as follows. Obtain PL nested compact
polyhedra ..$P_{3}\subset P_{2}\subset P_{1}$ such that $Z_{\alpha }\subset
\cap _{n=1}^{\infty }P_{n}.$ Inclusion determines a canonical homomorphism $%
\phi :\pi _{1}(Z_{\alpha })\rightarrow \lim_{\leftarrow }\pi _{1}(P_{n}),$
it is a nontrivial fact that $\phi $ is injective (established more
generally for planar sets \cite{FZ}).Thus there exists $N$ such that $\alpha 
$ is essential in $P_{N}$ and such that $diam(P_{N})<\varepsilon ,$ and now
select a point from each bounded component of $R^{2}\backslash P_{N}$ to
obtain $E_{N}$).

Recall $\alpha $ and $\beta $ are essential homotopic loops in $%
R^{2}\backslash E$ and $diam(E)<\varepsilon .$ Let $B\subset R^{2}$ denote
the convex hull of the set $E.$ Note $diam(B)=diam(E).$ It is apparent that $%
B\cap im(\beta )\neq \emptyset $, (since otherwise $\beta $ would be
inessential in $R^{2}\backslash E).$ Thus $diam(im\alpha \cup im\beta
)<2\varepsilon .$
\end{proof}

\begin{lemma}
\label{retract}Suppose $Y$ is a planar set, $\varepsilon >0,$ and $A$
denotes the closed annulus $S^{1}\times \lbrack 0,1].$ Suppose $%
h:A\rightarrow Y$ is a map such that $diam(h(\partial A))<\varepsilon .$
Then there exists a map $H:A\rightarrow Y$ such that $diam(H(A))<\varepsilon 
$ and $h_{\partial A}=H_{\partial A}.$ Suppose $\alpha :S^{1}\rightarrow Y$
is inessential and suppose $diam(im(\alpha ))<\varepsilon .$ Then there
exists a map $\beta :D^{2}\rightarrow Y$ such that $\beta _{S^{1}}=\alpha $
and $diam(im(\beta ))<\varepsilon .$
\end{lemma}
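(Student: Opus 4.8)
\textbf{Proof plan for Lemma \ref{retract}.}

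The plan is to reduce both assertions to Lemma \ref{shape} together with the basic structure theory of planar Peano continua. First I would observe that $h(\partial A)$ is the union of two loops $\alpha = h_{S^1 \times \{0\}}$ and $\beta = h_{S^1 \times \{1\}}$, which are homotopic in $Y$ (via $h$ restricted to $A$), and $\operatorname{diam}(h(\partial A)) < \varepsilon$. I would split into two cases according to whether $\alpha$ (equivalently $\beta$) is essential in $Y$. If $\alpha$ is inessential in $Y$, then so is $\beta$, and the second assertion of the lemma (once proved) lets us cap each boundary circle with a disk of diameter $< \varepsilon$ mapping into $Y$ and whose image meets the respective boundary loop; gluing the two caps to a thin collar gives $H : A \to Y$ with $H_{\partial A} = h_{\partial A}$ and controlled diameter. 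If $\alpha$ is essential, then Lemma \ref{shape} already gives $\operatorname{diam}(\operatorname{im}\alpha \cup \operatorname{im}\beta) < 2\varepsilon$, but I actually want $< \varepsilon$; here I would instead work inside a minimal Peano subcontinuum carrying the homotopy class, exploiting that an essential loop of small diameter must wind around a nearby complementary domain, so its entire image is pinned near that domain — mirroring the argument in Lemma \ref{shape} but tracking the sharper constant by replacing $E$ with a single point and noting $\operatorname{im}\beta$ must separate that point from $\infty$.

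For the second assertion, suppose $\alpha : S^1 \to Y$ is inessential with $\operatorname{diam}(\operatorname{im}\alpha) < \varepsilon$. The key step is to identify the ``correct'' planar target. Let $Z_\alpha$ be the union of $\operatorname{im}\alpha$ with those components $U$ of $R^2 \setminus \operatorname{im}\alpha$ satisfying $U \cap Y = U$, exactly as in the proof of Lemma \ref{shape}; by Lemma \ref{ezkill}, $Z_\alpha$ is a planar Peano continuum, $Z_\alpha \subset Y$, and $\operatorname{diam}(Z_\alpha) < \varepsilon$ since $Z_\alpha$ is contained in the union of $\operatorname{im}\alpha$ with the bounded complementary domains it spans (each of diameter bounded by $\operatorname{diam}(\operatorname{im}\alpha)$). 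Now $\alpha$ is a loop in the Peano continuum $Z_\alpha$. If $\alpha$ is inessential in $Z_\alpha$, then since $Z_\alpha$ is locally path connected and its fundamental group embeds in the inverse limit of free groups of approximating polyhedra (the fact cited in the proof of Lemma \ref{shape}), the nullhomotopy can be realized inside $Z_\alpha$ itself — concretely, fill $\alpha$ with a map $\beta : D^2 \to Z_\alpha \subset Y$, and $\operatorname{diam}(\operatorname{im}\beta) \le \operatorname{diam}(Z_\alpha) < \varepsilon$. The remaining possibility is that $\alpha$ is essential in $Z_\alpha$ but inessential in $Y$; then the nullhomotopy in $Y$ must pass through a point of $Y$ lying in a bounded component $V$ of $R^2 \setminus Z_\alpha$ with $V \cap Y \ne V$ — but by construction every such $V$ either lies entirely in $Y$ or not, and the ``not'' domains contain a point of $R^2 \setminus Y$, contradicting that $\alpha$ bounds a disk in $Y$ unless that point is unnecessary. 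Carefully: $\alpha$ essential in $Z_\alpha$ forces $\operatorname{im}\alpha$ to separate some bounded domain $V$ from $\infty$ in $R^2$; if $V \subset Y$ then $V$ would have been absorbed into $Z_\alpha$, so $V \not\subset Y$, i.e.\ $V$ contains a point $p \notin Y$; but then $\alpha$ is essential in $R^2 \setminus \{p\} \supset Y$, contradicting inessentiality in $Y$. Hence this case is empty and $\alpha$ must already be inessential in $Z_\alpha$, finishing the disk case.

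With the disk-filling assertion in hand, I would return to the annulus assertion. In the inessential case, cap as described above using two disks of diameter $< \varepsilon$ and a degenerate-in-diameter collar; since both caps and the boundary data have image within an $\varepsilon$-ball (they all meet $\operatorname{im} h(\partial A)$, whose diameter is already $< \varepsilon$), a final reparametrization keeps $\operatorname{diam}(H(A)) < \varepsilon$. In the essential case, I would use that $\alpha$ and $\beta$ both sit in $Z_\alpha$ (by the separation argument, $\operatorname{im}\beta$ is pinned to the same complementary domain as $\operatorname{im}\alpha$, so $\operatorname{im}\beta \subset Z_\alpha$ as well after absorbing any thin excursions, giving $\operatorname{diam}(\operatorname{im}\alpha \cup \operatorname{im}\beta) < \varepsilon$); then the annulus between them can be filled inside $Z_\alpha$ since $\pi_1(Z_\alpha)$ is well-understood and the two loops are freely homotopic there, and $Z_\alpha$ has the small diameter required. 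The main obstacle I anticipate is the essential case of the annulus statement: getting the sharp constant $\varepsilon$ rather than $2\varepsilon$ requires showing that the second loop's image cannot wander outside the $\varepsilon$-ball around the first, which hinges on the separation/winding argument and on verifying that $\operatorname{im}\beta$ genuinely lies in (or can be homotoped within a small neighborhood into) $Z_\alpha$. Handling the freely-homotopic-loops-bound-an-annulus step inside a possibly-wild-frontier Peano continuum is where I would need to be most careful, leaning on Remark \ref{peanorem} and the inverse-limit injectivity fact rather than on any PL or CAT(0) machinery.
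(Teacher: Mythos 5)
Your proposal takes a substantially more complicated route than the paper, and the central step has a gap.

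The paper's proof of Lemma \ref{retract} is a two-line retraction argument that treats both assertions uniformly and requires no case split, no appeal to Lemma \ref{shape}, and no shape theory. For the annulus statement, let $U$ be the unbounded component of $R^{2}\setminus h(\partial A)$ and let $P=R^{2}\setminus U$. Then $P$ is contained in the convex hull of $h(\partial A)$, so $\mathrm{diam}(P)=\mathrm{diam}(h(\partial A))<\varepsilon$, and $P$ is a nonseparating planar Peano continuum. One takes the retraction $R:R^{2}\rightarrow P$ that is the identity on $P$ and collapses $\overline{U}$ onto $\partial P$ (the Carath\'{e}odory/prime-end retraction), and sets $H=R\circ h$. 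Because $R$ fixes $P$ pointwise and sends the rest of $h(A)$ into $\partial P\subset h(\partial A)\subset Y$, the image $H(A)$ lies in $Y$, lies in $P$ so has diameter $<\varepsilon$, and agrees with $h$ on $\partial A$. The disk statement is identical: fill $\mathrm{im}(\alpha)$ to $Q$, take any nullhomotopy $\gamma:D^{2}\rightarrow Y$ of $\alpha$, and set $\beta=r\circ\gamma$ for the analogous retraction $r:R^{2}\rightarrow Q$. Nothing about essentiality, $Z_{\alpha}$, or the Fischer--Zastrow injectivity theorem enters.

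The gap in your disk argument is the assertion that ``$\alpha$ essential in $Z_{\alpha}$ forces $\mathrm{im}\,\alpha$ to separate some bounded domain $V$ from $\infty$,'' by which you mean $\alpha$ has nonzero winding number around some complementary component of $Z_{\alpha}$. That implication is false: a loop can be essential in $Z_{\alpha}$ (equivalently, essential in every approximating polyhedron $P_{n}$, by Fischer--Zastrow) while its winding number around every point of $R^{2}\setminus Z_{\alpha}$ is zero --- commutator words in $\pi_{1}(P_{n})$ do exactly this, since winding number only sees the abelianization. Consequently ``$\alpha$ is essential in $R^{2}\setminus\{p\}$'' does not follow, and you have not shown that $\alpha$ is inessential in $Z_{\alpha}$. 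Indeed it is doubtful that $Z_{\alpha}$ is the right target at all: what the retraction actually produces is a nullhomotopy landing in $Q\cap Y$ (where $Q$ is $\mathrm{im}\,\alpha$ together with \emph{all} its bounded complementary domains), which generally strictly contains $Z_{\alpha}$ since it keeps the part of a partially-covered complementary domain that happens to lie in $Y$. Since $\mathrm{diam}(Q\cap Y)\leq\mathrm{diam}(Q)<\varepsilon$, that larger set suffices, and there is no need to prove anything about $Z_{\alpha}$. Your annulus argument compounds the difficulty by splitting into essential/inessential cases and, in the essential case, trying to sharpen Lemma \ref{shape}'s $2\varepsilon$ bound to $\varepsilon$ and to show $\mathrm{im}\,\beta\subset Z_{\alpha}$; this is the hard way around, because filling in $h(\partial A)$ as a single compactum already produces the right small target without ever asking whether the boundary loops are essential. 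The lesson is that the convex-hull bound on the filled-in set, plus the fact that the prime-end retraction sends the exterior onto the boundary (which lies in $Y$), does all the work here.
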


\begin{proof}
Let $U$ be the unbounded component of $R^{2}\backslash h(\partial A).$ Note $%
diam(P)<\varepsilon $ and $P$ is a simply connected Peano continuum and
hence there exists a retract $R:R^{2}\rightarrow P$ such that $R_{P}=id_{P}.$
Let $H=R(h).$

Let $V$ be the unbounded component of $R^{2}\backslash im(\alpha ).$ Let $%
Q=R^{2}\backslash V.$ Then $Q$ is a simply connected Peano continuum and
hence there exists a retract $r:R^{2}\rightarrow Q.$ Let $\gamma
:D^{2}\rightarrow Y$ be any map extending $\alpha $ and let $\beta =r(\gamma
).$
\end{proof}

The following elementary Lemma is essentially the Alexander Trick and
ensures we can canonically adjust a map of a disk while keeping half the
boundary unadjusted.

\begin{lemma}
\label{blow}Suppose $Y$ is any metric space and $D$ is a topological disk
and $p\in \partial D$ and $\gamma \subset \partial D$ is closed arc such
that $p\notin \gamma .$ Suppose $f:D\rightarrow Y$ is a map. Suppose $\alpha
:[0,1]\rightarrow Y$ is a path connecting $f(p)$ and $q\in Y.$ Then there
exists a map $g:D\rightarrow Y$ such that $g(p)=q$ and $g_{\gamma
}=f_{\gamma }$ and there exists a homotopy from $f$ to $g$ such that the
trajectories in $Y$ under the homotopy have diameter bounded by $%
diam(im(\alpha )\cup im(f)).$ Moreover $diam(g(D))\leq diam(f(D)\cup
im(\alpha )).$
\end{lemma}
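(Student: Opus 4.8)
The plan is to realize $g$ as a composition of $f$ with a self-map of the disk that collapses a neighborhood of $p$ to $p$, together with a correction supplied by the path $\alpha$. First I would fix a homeomorphism $D\cong D^2$ and, using that $p\notin\gamma$ and $\gamma$ is a closed subarc of $\partial D$, choose a small sub-disk $E\subset D$ with $p\in\partial E$, $E\cap\gamma=\emptyset$, and such that $\partial E$ is the union of a subarc $\beta$ of $\partial D$ (containing $p$) and a spanning arc $\delta$ of $D$ running between the two endpoints of $\beta$; the complementary closed disk $D\setminus\operatorname{int}(E)$ then still contains $\gamma$ in its boundary. On $D\setminus\operatorname{int}(E)$ I leave $f$ unchanged, so that in particular $g_\gamma=f_\gamma$; all the work happens on $E$.

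On $E$ I would define $g$ as follows. Pick a homeomorphism $E\cong [0,1]^2$ taking $p$ to a boundary point and $\delta$ to $\{0\}\times[0,1]$, say, with $\beta$ going to the rest of the boundary. Define $g$ on $E$ by the Alexander-type formula that, along each segment from $\delta$ toward $p$, first traverses $f$ restricted to $\delta$ (hence staying in $f(D)$) and then runs along $\alpha$ from $f(p)$ out to $q$; concretely, on a collar $\delta\times[0,1]$ inside $E$ (with $\delta\times\{0\}=\delta$ and the arc $\beta$ sitting at the ``far'' end) set $g(s,t)=f(s)$ for $t\le 1/2$ and $g(s,t)=\alpha(\phi(s,t))$ for $t\ge 1/2$, where $\phi:\delta\times[1/2,1]\to[0,1]$ is continuous, equals $0$ on $\delta\times\{1/2\}$, and equals $1$ on the part of $\partial E$ approaching $p$ while being $0$ near the rest of $\partial E$; one checks this is well-defined and continuous and that $g(p)=\alpha(1)=q$. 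Since on $E$ the image of $g$ lies in $f(\delta)\cup \operatorname{im}(\alpha)\subset f(D)\cup\operatorname{im}(\alpha)$, and off $E$ we have $g=f$, the bound $\operatorname{diam}(g(D))\le\operatorname{diam}(f(D)\cup\operatorname{im}(\alpha))$ is immediate.

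For the homotopy from $f$ to $g$: it is the identity (constant) homotopy off $E$, and on $E$ it is the straight-line-in-parameter homotopy $h_u$ interpolating between $f|_E$ and $g|_E$ through maps of the same form (scaling how far along $\alpha$ one travels, i.e. replacing $\phi$ by $u\phi$ and the switchover time by something tending to $1$ as $u\to0$). This is continuous and agrees with the off-$E$ part on $\partial E\cap \operatorname{int}(D)$ because there $\phi$ has been arranged to vanish, so $h_u=f$ on that arc for every $u$. Each trajectory $u\mapsto h_u(x)$ either is constant (if $x\notin E$) or moves first within $f(\delta)\subset f(D)$ and then along an initial segment of $\alpha$, hence has diameter at most $\operatorname{diam}(\operatorname{im}(\alpha)\cup\operatorname{im}(f))$.

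The only delicate point — the ``main obstacle'' — is arranging the cutoff function $\phi$ and the collar so that $g$ and the homotopy are simultaneously continuous on all of $D$, match $f$ exactly on $\partial E\setminus\{p\}$ (so that the definitions on $E$ and on $D\setminus\operatorname{int}(E)$ glue), and yet push the ``$\alpha$ direction'' all the way to $g(p)=q$. This is a routine but fiddly point-set construction: one wants $\phi$ to be $1$ only at $p$ and $0$ on a whole neighborhood (in $\partial E$) of the endpoints of $\beta$, which is possible since $p$ is an interior point of the arc $\beta$; continuity at $p$ itself then follows because $\alpha$ is continuous and $\phi$ is. Everything else — the diameter estimates and the verification that $g_\gamma=f_\gamma$ — is immediate from the construction.
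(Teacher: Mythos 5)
Your plan — modify $f$ only on a small sub-disk $E$ with $p\in\partial E$ and $E\cap\gamma=\emptyset$, leaving $f$ untouched on $D\setminus\operatorname{int}(E)$ — is close in spirit to the paper's Alexander-trick argument, but the explicit formula you give for $g$ on $E$ is not continuous. You set $g(s,t)=f(s)$ for $t\le 1/2$ and $g(s,t)=\alpha(\phi(s,t))$ for $t\ge 1/2$ with $\phi(s,1/2)=0$; at the seam $t=1/2$ this forces $f(s)=\alpha(0)=f(p)$ for every $s\in\delta$, which fails unless $f$ is constant on $\delta$. The root cause is the product collar $\delta\times[0,1]$: the slice $\delta\times\{1\}$ is still an arc, so there is no single point of the domain at which to pin the terminal value $q$ of the travel along $\alpha$. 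No choice of cutoff $\phi$ fixes this, because at the seam the two halves of the definition would have to agree pointwise and they take values in generally disjoint sets $f(\delta)$ and $\{f(p)\}$.

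The construction is repaired by replacing the product parametrization with a cone structure on $E$ with apex at $p$. Taking $E$ convex, every $x\in E$ has a unique representation $x=(1-u)p+uy$ with $y\in\partial E$, $u\in[0,1]$; define $g(x)$ to be the time-$u$ value of the concatenation $\bar\alpha\ast f|_{[p,y]}$, where $\bar\alpha$ is $\alpha$ reversed (from $q$ to $f(p)$). Then $g(p)=q$, $g|_{\partial E}=f|_{\partial E}$ so $g$ glues with $f$ across $\delta$ and $g_\gamma=f_\gamma$, $\operatorname{im}(g|_E)\subset f(E)\cup\operatorname{im}(\alpha)$, and shrinking $\bar\alpha$ to the constant path at $f(p)$ gives the required homotopy with the stated bound on trajectory diameters. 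This is precisely what the paper does, except that it first doubles the half-disk by reflection so $p$ sits at the center of a round disk and the radial segments supply the cone structure for free; that is a convenience, not a substantive difference, so your choice to work locally near $p$ would be fine once the parametrization is corrected.
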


\begin{proof}
We may assume $D\subset R^{2}$ is the closed upper half disk of radius $1$
centered at $(0,0)=p,$ and $\gamma \subset \partial D$ is the semicircle of
radius $1.$

Let $E\subset R^{2}$ denote the closed unit disk. Define $F:E\rightarrow Y$
so that $F(x,-y)=f(x,y).$ Notice for each $z_{\theta }\in \partial E$ there
is a canonical path in $Y$ connecting $f(z_{\theta })$ and $q$, (we let $%
\beta _{\theta }$ denote the radial segment connecting $z_{\theta }$ and $%
(0,0)$ and we let $\gamma _{\theta }=f(\beta _{\theta })\ast \alpha .)$

Define $G:E\rightarrow Y$ so that $G$ maps $\beta _{\theta }\subset E$
linearly onto $\gamma _{\theta }\subset Y.$ Let $\alpha _{s}$ denote a
homotopy in $Y$ from $p$ to $\alpha $ so that $im(\alpha _{s})\subset
im(\alpha ).$

For $s\in \lbrack 0,1]$ define $\beta _{\theta }^{1}(s)$ and $\beta _{\theta
}^{2}(s)$ so that $\beta _{\theta }=\beta _{\theta }^{1}(s)\ast \beta
_{\theta }^{2}(s)$ (concatenated segments varying linearly with $s$) so that 
$\beta _{\theta }^{1}(0)=\beta _{\theta }$ and $\beta _{\theta
}^{2}(0)=(0,0).$ To obtain a homotopy from $F$ to $G$ let $F_{s}$ map $\beta
_{\theta }^{1}(s)\ast \beta _{\theta }^{2}(s)$ onto $f(\beta _{\theta })\ast
\alpha _{s}$ `homomorphically'. Let $g=G_{D}$ and let $f_{s}=F_{sD}.$
\end{proof}

\begin{theorem}
\label{pointadjust}Suppose $f:X\rightarrow Y$ is a map of a standard Peano
continuum $X$ and suppose $(Y,d)$ is any metric space. Suppose $\{x_{n}\}$
is a sequence of distinct points in $X$ (and each $x_{n}$ belongs to some
isolated boundary circle $C_{m}\subset Fr(X)$ and each boundary circle
contains at most finitely many of $\{x_{n}\}$) and suppose $y_{n}$ is a
sequence in $im(f)\subset Y$ and suppose $d(f(x_{n}),y_{n})\rightarrow 0.$
Then there exists a map $f^{\symbol{94}}:X\rightarrow Y$ such that $f^{%
\symbol{94}}$ is homotopic to $f$ and $f^{\symbol{94}}(x_{n})=y_{n}.$
\end{theorem}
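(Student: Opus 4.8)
The plan is to adjust $f$ one point at a time, using the isolated boundary circles to localize each adjustment, and then to glue the (infinitely many, but null) adjustments into a single homotopy. First I would reduce to a single point: fix a bijection $n \mapsto x_n$ and process the points in order. For each $n$, the point $x_n$ lies on some isolated boundary circle $C_{m(n)} \subset Fr(X)$; isolatedness of $C_{m(n)}$ in $Fr(X)$ (together with standardness of $X$) means there is a small closed annular collar $A_n \subset X$ with $C_{m(n)}$ as one boundary circle, and we may take the collars pairwise disjoint and shrinking in diameter since the boundary circles form a null sequence and each carries only finitely many of the $x_n$. The key point is that all the perturbation will take place inside $\cup_n A_n$ (plus, near each $x_n$, a small collar of $x_n$ inside $C_{m(n)}$), so the support of the homotopy is a null sequence of small sets. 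I will use Lemma \ref{blow} as the local engine: on a disk $D$ (a small half-disk neighborhood of $x_n$ inside $A_n$, with $x_n$ playing the role of $p$ and an arc $\gamma \subset \partial D$ disjoint from $x_n$ serving as the part of the boundary to be kept fixed), given a path $\alpha_n$ in $Y$ from $f(x_n)$ to $y_n$, Lemma \ref{blow} produces $g_n$ with $g_n(x_n) = y_n$, $g_n = f$ on $\gamma$, and a homotopy with trajectories of diameter at most $\mathrm{diam}(\mathrm{im}(\alpha_n) \cup f(D))$.

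The main obstacle is controlling the size of the path $\alpha_n$ and of $f(D)$, so that the resulting trajectories shrink to zero and the patched map is continuous. For the trajectories: since $d(f(x_n), y_n) \to 0$ and $y_n \in \mathrm{im}(f)$, we can choose $\alpha_n$ to be short — but $\mathrm{im}(\alpha_n)$ need not have small diameter for an arbitrary planar $Y$. Here I would invoke Lemma \ref{retract} (or the reasoning behind Lemma \ref{shape}): replace $\alpha_n$ by a path whose image lies in a small simply-connected Peano-continuum retract of a neighborhood, so that $\mathrm{diam}(\mathrm{im}(\alpha_n))$ is comparable to $d(f(x_n), y_n) \to 0$. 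More carefully, since $Y$ is an arbitrary metric space we cannot retract, so instead pick $y_n = f(x'_n)$ for some $x'_n \in X$ with $x'_n$ near $x_n$ (possible because $y_n \in \mathrm{im}(f)$ and $X$ is a Peano continuum, hence $f$ is uniformly continuous and $f(X)$ is the relevant target); then let $\alpha_n = f \circ \lambda_n$ where $\lambda_n$ is a short path in $X$ from $x_n$ to $x'_n$, so $\mathrm{diam}(\mathrm{im}(\alpha_n)) \le \mathrm{diam}(f(\text{small set})) \to 0$ by uniform continuity of $f$. Simultaneously, by uniform continuity $f(D_n) \to 0$ in diameter as the disks $D_n$ shrink. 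Hence the trajectory diameters tend to $0$.

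With the local data in hand, I would define $f^{\wedge}$ to equal $g_n$ on each $D_n$ and to equal $f$ off $\cup_n D_n$; these are consistent because each $g_n$ agrees with $f$ on $\partial D_n \setminus \{x_n\}$, hence on the overlap, and continuity at limit points of $\cup D_n$ follows since $\mathrm{diam}(g_n(D_n)) \to 0$ and $g_n(D_n)$ is close to $f(D_n)$. The homotopy from $f$ to $f^{\wedge}$ is likewise the concatenation-in-parallel of the local homotopies $f_s$ from Lemma \ref{blow}, constant off $\cup D_n$; continuity of the total homotopy again uses that the trajectory diameters form a null sequence. Finally $f^{\wedge}(x_n) = g_n(x_n) = y_n$ for all $n$, as required. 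The one technical care-point is that a single boundary circle $C_m$ may contain several of the $x_n$: I handle this by choosing, within the collar $A_m$, finitely many disjoint sub-disks $D_{n}$ around those finitely many points, which is possible since only finitely many $x_n$ lie on $C_m$.
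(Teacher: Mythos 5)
Your overall scheme---localize each adjustment to a small disk $D_n$ meeting $Fr(X)$ in an arc around $x_n$, apply Lemma \ref{blow} with the rest of $\partial D_n$ held fixed, and sew the local homotopies together using that the disks and trajectory images form null sequences---is exactly the structure of the paper's proof. However, the step where you produce a null sequence of connecting paths $\alpha_n$ is flawed. You assert that since $y_n \in \mathrm{im}(f)$ and $d(f(x_n),y_n)\to 0$, one may choose $x_n'\in X$ near $x_n$ with $f(x_n')=y_n$, invoking uniform continuity of $f$. That is the wrong direction: uniform continuity says nearby points of $X$ have nearby images, not that nearby points of $\mathrm{im}(f)$ have nearby preimages. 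A concrete failure: let $X=\{\,1\le |z|\le 2\,\}$ (a standard planar Peano continuum whose inner circle $C=\{|z|=1\}$ is an isolated boundary circle), let $Y=[0,1]$, and let $f(re^{i\theta})=\max(0,\,2(r-3/2))$. Then $\mathrm{im}(f)=[0,1]$; for distinct $x_n\in C$ one has $f(x_n)=0$, and with $y_n=1/n$ one has $d(f(x_n),y_n)\to 0$, yet $f^{-1}(y_n)=\{\,|z|=3/2+1/(2n)\,\}$ stays at distance at least $1/2$ from $C$. No preimage of $y_n$ is anywhere near $x_n$, and the path $f\circ\lambda_n$ you propose does not exist.

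The correct repair, and what the paper does, is to use the target rather than the source: $\mathrm{im}(f)$ is itself a Peano continuum (it is a compact metric space which, by Hahn--Mazurkiewicz, is a continuous Hausdorff image of $[0,1]$ via $f$ composed with a Peano map onto $X$). Peano continua are uniformly locally arcwise connected: for every $\varepsilon>0$ there is $\delta>0$ such that any two points of $\mathrm{im}(f)$ at distance less than $\delta$ are joined by an arc in $\mathrm{im}(f)$ of diameter less than $\varepsilon$. Since $f(x_n),y_n\in\mathrm{im}(f)$ and $d(f(x_n),y_n)\to 0$, this immediately supplies a null sequence of paths $\alpha_n\subset\mathrm{im}(f)$ from $f(x_n)$ to $y_n$, with no need to lift anything back to $X$. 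With this substitution, the remainder of your proof (the choice of $D_n$ and $\gamma_n$, the application of Lemma \ref{blow}, and the continuity of the sewn map and homotopy via the null-sequence estimates) goes through and coincides with the paper's argument.
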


\begin{proof}
Since $im(f)$ is a Peano continuum, there exists a null sequence of paths $%
\alpha _{n}:[0,1]\rightarrow im(f)$ connecting $f(x_{n})$ and $y_{n}.$
Select a null sequence of pairwise disjoint closed topological disks $%
\{D_{n}\}$ such that $x_{n}\in D_{n}\subset X$ and such that $(\partial
D_{n})\cap Fr(X)$ is a nontrivial arc containing $x_{n}$ in its interior.
Let $\gamma _{n}=\partial D_{n}\backslash (int((\partial D_{n})\cap Fr(X))).$
Let $f^{\symbol{94}}$ and $f$ agree over the set $X\backslash (\cup
int(D_{n})).$ Apply Lemma \ref{blow} to the data $(D_{n},\gamma
_{n},x_{n},\alpha _{n})$ , and sew together the resulting maps to obtain $f^{%
\symbol{94}}.$
\end{proof}

\begin{lemma}
\label{frfix}Suppose $f,g:X\rightarrow Y$ are maps of a standard planar
Peano continuum $X$ into the metric space $Y.$ Suppose for each isolated
circle $C_{n}\subset Fr(X)$ there exists $x_{n}\in C_{n}$ such that $%
f(x_{n})=g(x_{n}).$ Suppose $f_{\ast }=g_{\ast }$ and $f_{\ast },g_{\ast
}:\pi _{1}(X,x_{1})\rightarrow \pi _{1}(Y,f(x_{1}))$ are the induced
homomorphisms. Suppose $f(C_{n})$ is essential in $Y$ for each isolated
circle $C_{n}\subset Fr(X).$ Then there exists a map $f^{\symbol{94}%
}:X\rightarrow Y$ such that $f$ is homotopic to $f$ and $f_{Fr(X)}^{\symbol{%
94}}=g_{Fr(X)}.$
\end{lemma}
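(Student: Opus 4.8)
The plan is to leave $f$ unchanged off small pairwise disjoint collars of the boundary circles of $X$, and on each collar to insert a suitably \emph{small} homotopy carrying $f|_{C_n}$ to $g|_{C_n}$; the hypotheses $f_\ast=g_\ast$, $f(x_n)=g(x_n)$ and ``$f(C_n)$ essential'' are exactly what make such small homotopies available. First I would pin down $Fr(X)$. Since $X$ is a standard Peano continuum its complementary domains $\{U_n\}$ form a null sequence with isolated round boundary circles $C_n=\partial U_n$, and a short point-set argument (cf.\ Remark~\ref{peanorem}) gives $Fr(X)=\overline{\bigcup_n C_n}$ with $\mathrm{diam}(C_n)\to 0$. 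Put $W=Fr(X)\setminus\bigcup_n C_n$. Because each $C_n$ shrinks to a point, every point of $W$ is a limit of circle points $q_k\in C_{n_k}$ with $n_k\to\infty$, hence a limit of the distinguished points $x_{n_k}$; therefore $f|_W=g|_W$ automatically, and it suffices to build $f^{\wedge}\simeq f$ with $f^{\wedge}=f$ off $\bigcup_n\mathrm{int}(D_n)$ and $f^{\wedge}|_{C_n}=g|_{C_n}$, where $D_n\cong C_n\times[0,1]$ is a closed collar of $C_n$ in $X$ with $C_n=C_n\times\{0\}$, the collars taken pairwise disjoint with $\mathrm{diam}(D_n)\to 0$.

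Fix $n$. Choosing a path $\delta$ in $X$ from $x_1$ to $x_n$ and applying $f_\ast=g_\ast$ to $[\delta\cdot C_n\cdot\delta^{-1}]$ (using $f(x_1)=g(x_1)$ and $f(x_n)=g(x_n)$) gives $[f|_{C_n}]=\mu_n[g|_{C_n}]\mu_n^{-1}$ in $\pi_1(Y,f(x_n))$, where $\mu_n=[(f\circ\delta)^{-1}\ast(g\circ\delta)]$. In particular $g(C_n)$ is essential too, and (both images being Peano continua through the common point $f(x_n)$) the circle maps $f|_{C_n}$ and $g|_{C_n}$ are freely homotopic in $Y$. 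Since both are essential with $\mathrm{diam}(\mathrm{im}(f|_{C_n})),\mathrm{diam}(\mathrm{im}(g|_{C_n}))\le\varepsilon_n$ and $\varepsilon_n\to 0$ (uniform continuity of $f,g$), \lemref{shape} yields $\mathrm{diam}\big(\mathrm{im}(f|_{C_n})\cup\mathrm{im}(g|_{C_n})\big)<2\varepsilon_n$; feeding any free homotopy, viewed as a map of the annulus $C_n\times[0,1]$, into the first part of \lemref{retract} then produces $H_n:C_n\times[0,1]\to Y$ with $H_n(\cdot,0)=f|_{C_n}$, $H_n(\cdot,1)=g|_{C_n}$ and $\mathrm{diam}(\mathrm{im}(H_n))<2\varepsilon_n$, so that $\mathrm{im}(H_n)$ lies within $o(1)$ of $f(x_n)=g(x_n)$.

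Now assemble $f^{\wedge}$ and the homotopy together by sliding $H_n$ through $D_n$: set $f^{\wedge}=f$ off $\bigcup\mathrm{int}(D_n)$, and on $D_n$ let $f^{\wedge}(c,t)$ equal $H_n(c,1-2t)$ for $t\le 1/2$ and $f(c,2t-1)$ for $t\ge 1/2$, so that $f^{\wedge}|_{C_n}=g|_{C_n}$ and $f^{\wedge}$ agrees with $f$ on the inner circle $C_n\times\{1\}$. The homotopy from $f$ to $f^{\wedge}$ introduces the backwards copy of $H_n$ gradually (moving the ``split level'' in the collar from $0$ to $1/2$), is stationary on $C_n\times\{1\}$ and off $\bigcup\mathrm{int}(D_n)$, and keeps every trajectory inside $f(D_n)\cup\mathrm{im}(H_n)$; as this set has diameter $\le\mathrm{diam}(f(D_n))+2\varepsilon_n\to 0$, the pieces glue to a genuine homotopy of $X$. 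The same estimate — $f^{\wedge}(D_n)\subset f(D_n)\cup\mathrm{im}(H_n)$ is a set of diameter $\to 0$ containing $f(x_n)=g(x_n)$ — together with continuity of $f$ shows $f^{\wedge}$ is continuous at every point of $W$, and on each $C_n$ it equals $g$; since $\bigcup_n C_n$ is dense in $Fr(X)$, this forces $f^{\wedge}|_{Fr(X)}=g|_{Fr(X)}$.

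I expect the main obstacle to be exactly the production of the \emph{small} homotopies $H_n$: an arbitrary free homotopy between $f|_{C_n}$ and $g|_{C_n}$ may sweep out a large subset of $Y$, and then the glued homotopy over $\bigcup D_n$ would fail to be continuous along $W$. This is where the essentiality hypothesis is indispensable, through \lemref{shape} (two small essential homotopic loops span a small band) and \lemref{retract} (a map of an annulus with small boundary image can be pushed to have small total image). The remaining ingredients — the conjugacy computation, the choice of disjoint null collars, and the bookkeeping for the glued homotopy and its continuity at $W$ — are routine.
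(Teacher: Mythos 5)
Your proof is correct, and it invokes the same two workhorses as the paper's (Lemma~\ref{shape} to get that $\mathrm{im}(f|_{C_n})\cup\mathrm{im}(g|_{C_n})$ is small, then Lemma~\ref{retract} to compress a homotopy of $C_n$-loops to a small one). The point-set setup differs, though, and the difference is worth noting. The paper encloses each $C_n$ in a round disk $D_n$ tangent to $C_n$ at the single point $x_n$, uses the resulting \emph{pinched} annulus $A_n$, and asserts that $f|_{C_n}$ and $g|_{C_n}$ are homotopic \emph{rel} $x_n$ so that it can fill $A_n$ with a based homotopy. You instead take honest (unpinched) collar annuli $D_n\cong C_n\times[0,1]$ in $X$, observe correctly that $f_\ast=g_\ast$ plus $f(x_n)=g(x_n)$ only delivers the conjugacy $[f|_{C_n}]=\mu_n[g|_{C_n}]\mu_n^{-1}$ in $\pi_1(Y,f(x_n))$, i.e.\ a \emph{free} homotopy, and then insert that free homotopy into half of the collar and reparametrize $f$ over the other half, checking continuity on $W=Fr(X)\setminus\bigcup C_n$ by the usual null-sequence estimate (which also needs your preliminary observation $f|_W=g|_W$). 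Your version is in fact the safer one: conjugacy does not give a based homotopy of $f|_{C_n}$ and $g|_{C_n}$ unless $\mu_n$ happens to centralize $[g|_{C_n}]$, so the paper's based-homotopy step on the pinched annulus would need further justification, whereas the free-homotopy/unpinched-collar route you take sidesteps that entirely and still lands on $f^{\wedge}|_{Fr(X)}=g|_{Fr(X)}$. The remaining bookkeeping you defer (choice of pairwise disjoint null collars, the explicit formulas for the homotopy sliding the split level, and the continuity check along $W$) is routine and matches what the paper also leaves implicit.
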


\begin{proof}
Since $f_{\ast }=g_{\ast },$ for each $C_{n}$ the loops $f_{C_{n}}$ and $%
g_{C_{n}}$ are essential in $Y$ and path homotopic in $Y.$ It follows from
Lemma \ref{shape} that $\lim diam_{n\rightarrow \infty }((f(C_{n}))\cup
g(C_{n}))\rightarrow 0.$

It follows from Lemma \ref{retract} that there exists a path homotopy
connecting $f(C_{n})$ and $g(C_{n})$ so that the image of the path homotopy
has diameter bounded by $diam((f(C_{n}))\cup g(C_{n}))+\frac{1}{2^{n}}.$

Since $X$ is a standard Peano continuum, we may select pairwise disjoint
closed round disks $D_{n}\subset R^{2}$ such that $C_{n}\subset D_{n},$ such
that $C_{n}\cap \partial D_{n}=\{x_{n}\}$ and such that $diam(D_{n})%
\rightarrow 0.$

Note the based loops $f_{\partial D_{n}},f_{C_{n}}$ and $g_{C_{n}}$ are
homotopic (fixing $x_{n}$ throughout), and By Lemma \ref{retract} for large $%
n$ the homotopies can be chosen to be small.

In particular, in the pinched annulus $A_{n}$ bounded by $C_{n}\cup \partial
D_{n}$ we can define $f^{\symbol{94}}$:$A_{n}\rightarrow Y$ so that $%
f_{\partial D_{n}}^{\symbol{94}}=f_{\partial D_{n}},$ and so that $%
f_{C_{n}}^{\symbol{94}}=g_{C_{n}}$ and so that $diam(f^{\symbol{94}%
}(A_{n}))\rightarrow 0.$ Let $\ f$ and $f^{\symbol{94}}$ agree on the set $%
X\backslash (\cup int(A_{n}))$ and we have obtained the desired map $f^{%
\symbol{94}}.$
\end{proof}

\begin{lemma}
\label{circlekill}Suppose $X\subset R^{2}$ is a standard Peano continuum,
suppose $Y$ is any metric space and $f:X\rightarrow Y$ a map$.$ Suppose $%
U\subset R^{2}\backslash X$ is the union of those bounded components $%
\{U_{n}\}\subset R^{2}\backslash X$ such that $f_{\partial U_{n}}$ is
inessential in $Y.$ Let $Z=X\cup U.$ Then $Z$ is a standard Peano continuum
and there exists a map $F:Z\rightarrow Y$ such that $F_{X}=f$ and $F_{C_{n}}$
is essential for all map isolated circles $C_{n}\subset Fr(Z).$
\end{lemma}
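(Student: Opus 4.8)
\textbf{Proof proposal for Lemma \ref{circlekill}.}

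The plan is to build $Z$ by simply adjoining to $X$ the bounded complementary domains on whose boundary $f$ restricts to an inessential loop, and then extend $f$ across each such domain using the smallness estimates already established. First I would verify that $Z=X\cup U$ is a standard Peano continuum: this is immediate from Lemma \ref{ezkill}, since $U$ is by definition a union of bounded components of $R^{2}\backslash X$, so $Z$ is automatically a standard Peano continuum whose complementary domain boundaries form a null sequence of round circles, each isolated in $Fr(Z)$. The isolated circles $C_{n}\subset Fr(Z)$ are precisely the boundaries $\partial U_{m}$ of the bounded components of $R^{2}\backslash X$ that were \emph{not} absorbed into $U$ (together with the one unbounded boundary component, if any), so by construction $f_{C_{n}}=f_{\partial U_{m}}$ is essential in $Y$ for every such $C_{n}$.

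Next I would extend $f$ over each absorbed domain. For each $U_{n}\subset U$, the loop $f_{\partial U_{n}}$ is inessential in $Y$ and $\mathrm{diam}(f(\partial U_{n}))\to 0$ as $n\to\infty$ (because $\mathrm{diam}(U_{n})\to 0$, $f$ is uniformly continuous, and $\partial U_{n}\subset X$). Since $X$ is standard, $\partial U_{n}$ is a round circle bounding the disk $\overline{U_{n}}$; applying the second half of Lemma \ref{retract} to the inessential loop $f_{\partial U_{n}}$, we obtain a map $\beta_{n}:\overline{U_{n}}\to Y$ with $\beta_{n}$ agreeing with $f$ on $\partial U_{n}$ and $\mathrm{diam}(\mathrm{im}(\beta_{n}))<\mathrm{diam}(f(\partial U_{n}))+\frac{1}{2^{n}}\to 0$. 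Define $F:Z\to Y$ by $F_{X}=f$ and $F_{\overline{U_{n}}}=\beta_{n}$. These pieces agree on the overlaps $\partial U_{n}$, so $F$ is well defined, and it is continuous: on $Z\setminus\bigcup\partial U_{n}$ it is locally given by one of the $\beta_{n}$ or by $f$, and at a point $z\in\bigcap\partial U_{n}$ (a limit of the shrinking circles) continuity follows because the diameters of $F(\overline{U_{n}})$ tend to $0$ while the diameters of the $U_{n}$ themselves tend to $0$, so small neighborhoods of $z$ have small image.

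The main obstacle is the continuity of $F$ along the accumulation set of the null sequence $\{\overline{U_{n}}\}$ — i.e.\ verifying that the ``sewn'' map does not jump where infinitely many patches cluster. This is exactly the standard null-sequence gluing argument: given $z\in Z$ and $\varepsilon>0$, choose $N$ so large that $\mathrm{diam}(\overline{U_{n}})+\mathrm{diam}(F(\overline{U_{n}}))<\varepsilon$ for all $n\geq N$; then use uniform continuity of $f$ on the compact set $X$ together with the fact that $F$ restricted to $X\cup\bigcup_{n<N}\overline{U_{n}}$ (a finite union of closed sets, continuous on each) is continuous, to produce a neighborhood of $z$ in $Z$ mapping into an $\varepsilon$-ball. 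Once this routine check is in place, $F_{X}=f$ holds by definition, and $F_{C_{n}}$ is essential for every isolated circle $C_{n}\subset Fr(Z)$ by the observation above that the surviving boundary circles are exactly those on which $f$ was essential. This completes the proof.
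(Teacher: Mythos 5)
Your argument is essentially the same as the paper's: Lemma \ref{ezkill} gives that $Z$ is a standard Peano continuum, Lemma \ref{retract} supplies extensions over each $\overline{U_{n}}$ with shrinking image diameters, and the null-sequence gluing argument yields continuity of $F$; the surviving isolated circles are exactly those on which $f$ was essential. You spell out the continuity check in a bit more detail than the paper, but the structure and the lemmas invoked are identical.
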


\begin{proof}
By Lemma \ref{ezkill} that $Z$ is a standard Peano continuum.

For each component $U_{n}\subset U$, the unbased loop $f_{\partial U_{n}}$
inessential in $Y$, and thus there exists an extension $F_{\overline{U_{n}}%
}\rightarrow Y$ such that $F_{\partial U_{n}}=f_{\partial U_{n}}.$ By Lemma 
\ref{retract} we can also require that $diam(F(U_{n}))\leq diam(f(\partial
U_{n})).$ Let $F$ and $f$ agree over $X,$ since $diam(U_{n})\rightarrow 0$
and since $f$ is uniformly continuous on $X,$ it follows that $%
diam(f(U_{n}))\rightarrow 0$ and hence the extension $F$ is continuous. By
construction $f_{C_{n}}=F_{C_{n}}$ for all isolated circles $C_{n}\subset
\partial Fr$ and hence $F_{C_{n}}$ is essential in $Y.$
\end{proof}

\begin{lemma}
\label{fg}Suppose $X\subset R^{2}$ is a standard Peano continuum, suppose $Y$
is any metric space and $p\in X$ and $f,g:X\rightarrow Y$ are maps such that 
$f(p)=g(p)$ and $f_{\ast }=g_{\ast }$ and $f_{\ast }:\pi
_{1}(X,p)\rightarrow \pi _{1}(Y,f(p))$ is the induced homomorphism$.$
Suppose $U\subset R^{2}\backslash X$ is the union of those bounded
components $\{U_{n}\}\subset R^{2}\backslash X$ such that $f_{\partial
U_{n}} $ is inessential in $Y.$ Suppose $Z=X\cup U$ and suppose the maps $%
F,G:Z\rightarrow Y$ satisfy $F_{X}=f$ and $G_{X}=g.$ \ Then $G_{\ast
}=F_{\ast }$ (and $F_{\ast }:\pi _{1}(Z,p)\rightarrow \pi _{1}(Y,f(p))$
denotes the induced homomorphism).
\end{lemma}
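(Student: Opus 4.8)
The plan is to reduce everything to a single fact: the inclusion $i:X\hookrightarrow Z$ is surjective on $\pi_{1}$. Granting that, the conclusion is a one-line diagram chase. Since $p\in X$ we have $F(p)=f(p)=g(p)=G(p)$, so $F_{*},G_{*}:\pi_{1}(Z,p)\to\pi_{1}(Y,f(p))$ are defined, and $F\circ i=F_{X}=f$, $G\circ i=G_{X}=g$, whence $F_{*}\circ i_{*}=f_{*}=g_{*}=G_{*}\circ i_{*}$; surjectivity of $i_{*}$ then forces $F_{*}=G_{*}$.

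So the work is in proving $i_{*}$ is onto, and first I would record the geometry. Because $X$ is standard, each bounded component $U_{n}\subset R^{2}\backslash X$ has $\partial U_{n}=C_{n}$ a round Euclidean circle; since $U_{n}$ is a bounded connected set missing $C_{n}$ it lies in the open round disk bounded by $C_{n}$, and as $U_{n}$ is also relatively closed in that disk (its closure meets the disk in $U_{n}$), connectedness gives that $U_{n}$ equals that open disk and $\overline{U_{n}}$ is the closed round disk. Thus $Z=X\cup U$ is $X$ with a null sequence of closed disks $\overline{U_{n}}$ glued on along the circles $C_{n}\subset X$, and $Z$ is a standard Peano continuum by Lemma \ref{ezkill}.

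Now given a loop $\gamma:(S^{1},1)\to(Z,p)$ I would push it off $\bigcup_{n}U_{n}$. For each $n$ and each component arc $J$ of the open set $\gamma^{-1}(U_{n})$, the endpoints $a,b$ of $J$ satisfy $\gamma(a),\gamma(b)\in C_{n}$; replace $\gamma|_{\overline{J}}$ by the shorter of the two subarcs of $C_{n}$ joining $\gamma(a)$ to $\gamma(b)$. This replacement has diameter at most $\min(\mathrm{diam}\,C_{n},\ \pi|\gamma(a)-\gamma(b)|)$, the second bound being the elementary chord--arc estimate on a round circle; combined with uniform continuity of $\gamma$ and $\mathrm{diam}\,C_{n}\to 0$, this makes the resulting map $\gamma':S^{1}\to X$ continuous (a point at which infinitely many replaced arcs accumulate is handled by the first bound if the indices $n$ vary, and by the second if a single $U_{n}$ is re-entered infinitely often). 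Since each $\overline{U_{n}}$ is a disk, $\gamma|_{\overline{J}}$ is homotopic rel endpoints inside $\overline{U_{n}}$ to its replacement; assembling these homotopies with the identity elsewhere (again using the null-sequence property) gives a homotopy $\gamma\simeq\gamma'$ in $Z$, fixing the basepoint throughout since $p\in X$ lies in no $U_{n}$. Hence $[\gamma]=i_{*}[\gamma']$, proving surjectivity.

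The only genuine obstacle is this surjectivity step, and within it the one delicate point is the continuity of $\gamma'$ and of the homotopy in the presence of infinitely many, possibly accumulating, excursions of $\gamma$ into the disks $U_{n}$ — which is exactly what the two diameter bounds above are arranged to control.
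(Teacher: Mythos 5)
Your proposal is correct and takes essentially the same approach as the paper: the paper's proof also takes an arbitrary loop $\alpha$ in $Z$, replaces each maximal subarc mapped into a $U_n$ by a path homotopic (rel endpoints) arc lying in $\partial U_n$, invokes the null-sequence property and uniform continuity to make the replaced loop $\beta$ and the connecting homotopy continuous, and then uses $f_\ast = g_\ast$ applied to $\beta \subset X$ to conclude $F(\alpha) \simeq F(\beta) = f(\beta) \simeq g(\beta) = G(\beta) \simeq G(\alpha)$. Your reformulation in terms of surjectivity of $i_\ast : \pi_1(X,p) \to \pi_1(Z,p)$ is a cleaner way to state the same reduction, and your explicit chord--arc estimate for the round circles $C_n$ is a more concrete version of the paper's assertion that the homotopies ``can be chosen to be small.''
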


\begin{proof}
Suppose $\alpha :[0,1]\rightarrow Z$ is a loop based at $p.$ By Lemma \ref
{ezkill} $Z$ is a Peano continuum. For each component $U_{n}\subset U$ and
each component $J\subset \alpha ^{-1}(U_{n})$ replace $\alpha _{J}$ by $%
\beta _{J}$ such that $\beta _{\overline{J}}$ is path homotopic to $\alpha _{%
\overline{J}}$ in $Z$ and such that $im(\beta _{J})\subset \partial U_{n}$
and such that $diam(im(\alpha _{j}))=diam(im(\beta _{j})).$ Let $\beta =\cup
_{J}\alpha _{([0,1]\backslash \cup J)}\cup \beta _{J}).$

Since the collection of all such open arcs $J$ is a null sequence of
intervals in $[0,1],$ and since $\alpha $ is uniformly continuous, the
homotopies connecting $\alpha _{J}$ to $\beta _{J}$ can be chosen to be
small, and the union of the homotopies determines that $\alpha $ and $\beta $
are path homotopic in $Z.$ Thus $F(\alpha )$ and $F(\beta )$ are path
homotopic in $Y$ and $G(\alpha )$ and $G(\beta )$ are path homotopic in $Y.$

Note $f(\beta )=F(\beta )$ and $g(\beta )=G(\beta )$ and, (since $f_{\ast
}=g_{\ast }$), $f(\beta )$ and $g(\beta )$ are path homotopic in $Y.$ Thus $%
F(\alpha )$ and $G(\alpha )$ are path homotopic in $Y.$
\end{proof}

\begin{lemma}
\label{cherry}Suppose $X$ is a planar continuum and $Y\subset R^{2}$ is any
planar set and $Z\subset X$ is a continuum and $f,g:X\rightarrow Y$ are maps
such that $f_{Z}=g_{Z}.$ Then $f$ and $g$ are homotopic if both the
following conditions hold 1) Each bounded component $U\subset X\backslash Z$
is an open Jordan disk. (i.e. $\overline{U}\backslash U$ is a simple closed
curve) and 2) If $X\backslash Z$ has infinitely many components $%
U_{1},U_{2},..$ then $diam(U_{n})\rightarrow 0.$
\end{lemma}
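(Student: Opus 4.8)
The plan is to exhibit $X$ as $Z$ together with the closed disks that fill in the Jordan-disk "holes" of $X\setminus Z$, and to deform $f$ to $g$ one disk at a time while holding $Z$ fixed. First I would record the combinatorial picture. Since every component of $X\setminus Z$ is an open Jordan disk, $X\setminus Z$ is open in $R^{2}$, hence has only countably many components $U_{1},U_{2},\dots$, each bounded because $X$ is compact. For each $n$ the frontier $C_{n}=\overline{U_{n}}\setminus U_{n}$ is a simple closed curve lying in $Z$: we have $\overline{U_{n}}\subset X$, and a point of $C_{n}$ in $X\setminus Z$ would lie in a component of $X\setminus Z$ meeting $U_{n}$, hence in $U_{n}$ itself. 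By Jordan--Schoenflies each $\overline{U_{n}}$ is a closed disk with boundary circle $C_{n}$, and for $n\neq m$ the sets $\overline{U_{n}},\overline{U_{m}}$ meet only in $C_{n}\cap C_{m}\subset Z$; also $X=Z\cup\bigcup_{n}\overline{U_{n}}$. If there are infinitely many $U_{n}$, then $diam(U_{n})\to 0$, so uniform continuity of $f$ and $g$ on the compactum $X$ gives $diam(f(\overline{U_{n}}))\to 0$ and $diam(g(\overline{U_{n}}))\to 0$, and since $f(C_{n})=g(C_{n})\neq\emptyset$ also $diam(f(\overline{U_{n}})\cup g(\overline{U_{n}}))\to 0$.

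Because $f|_{Z}=g|_{Z}$, and in particular $f|_{C_{n}}=g|_{C_{n}}$, it suffices to produce for each $n$ a homotopy $H^{n}\colon\overline{U_{n}}\times[0,1]\to Y$ from $f|_{\overline{U_{n}}}$ to $g|_{\overline{U_{n}}}$ that is constant on $C_{n}$ and whose image has diameter at most $diam(f(\overline{U_{n}})\cup g(\overline{U_{n}}))$. Granting this, one glues the $H^{n}$ to the constant homotopy $(z,t)\mapsto f(z)$ on $Z\times[0,1]$; the result is well defined (the two prescriptions agree on $C_{n}\times[0,1]$) and continuous by the usual null-sequence argument, since a point of $Z$ is approached either through $Z$, where $H=f$, or through disks $\overline{U_{n}}$ of arbitrarily small diameter and arbitrarily small $H^{n}$-image. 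So the whole lemma reduces to a statement about a single closed disk.

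That statement is: if $\phi,\psi\colon D^{2}\to Y$ agree on $S^{1}$, then $\phi$ is homotopic to $\psi$ rel $S^{1}$ by a homotopy with image contained in $W:=\phi(D^{2})\cup\psi(D^{2})$. To prove it, glue $\phi$ and $\psi$ along $S^{1}$ to obtain a map $\chi\colon S^{2}\to W$; here $W$ is a planar Peano continuum, being the union of two Peano continua with nonempty intersection. If $\pi_{2}(W)=0$ then $\chi$ is nullhomotopic in $W$, and by the standard difference-class argument this yields a homotopy rel $S^{1}$ from $\phi$ to $\psi$ lying in $W$, hence with track diameters at most $diam(W)$, exactly as required above. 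Thus the lemma reduces to the assertion that every planar Peano continuum has vanishing second homotopy group. I expect this asphericity of planar sets to be the main obstacle; it may be cited, or obtained in the spirit of Lemma \ref{shape}: write $W$ as a nested intersection $\cdots\subset P_{2}\subset P_{1}$ of compact planar polyhedra, each homotopy equivalent to a finite wedge of circles and hence aspherical, note that $\chi$ is nullhomotopic in every $P_{k}$, and promote this to a nullhomotopy in $\bigcap_{k}P_{k}=W$ using the shape/null-sequence control available for planar sets. Everything else --- the Schoenflies bookkeeping, the diameter estimates, and the continuity of the assembled homotopy --- is routine.
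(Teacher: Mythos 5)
Your proof is correct and follows essentially the same route as the paper: decompose $X$ into $Z$ plus the closed Jordan disks $\overline{U_n}$, glue $f$ and $g$ over each disk to obtain a map of $S^2$ into the planar Peano continuum $W_n=f(\overline{U_n})\cup g(\overline{U_n})$, invoke asphericity of planar sets (the paper cites \cite{CCZ} for this, so your worry about it being an obstacle is resolved by citation), extract from the resulting extension over $B^3$ a homotopy rel $C_n$ with image in $W_n$, and assemble via the null-sequence and uniform-continuity argument.
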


\begin{proof}
For each bounded component $U\subset X\backslash Z$ select embeddings $h_{u}:%
\overline{U}\hookrightarrow \partial B^{3}$ and $h_{l}:\overline{U}%
\rightarrow \partial B^{3}$ mapping $\overline{U}$ respectively onto the
upper and lower hemispheres of the $2-$sphere. Glue the maps together to
obtain a map of the 2-sphere $j_{U}=$ $f(h_{u}^{-1})\cup
g(h_{l}^{-1}):\partial B^{3}\rightarrow Y.$ Since planar set are aspherical 
\cite{CCZ}, $j_{U}$ is the restriction of a map $J_{U}:B^{3}\rightarrow
im(j_{U}).$ The map $J_{U}$ determines a homotopy $\phi _{U}^{t}$ between $%
f_{\overline{U}}$ and $g_{\overline{U}}$ and the image of the homotopy lies
in $im(f_{\overline{U}})\cup im(g_{\overline{U}}).$ If $X\backslash Z$ has
finitely many components then we take the union of the homotopies. Since
each of $f$ and $g$ are uniformly continuous (since $X$ is compact) , and
since $diam(U_{n})\rightarrow 0,$ it follows that the union of the
homotopies $f_{z}\cup (\cup _{n}\phi _{U_{n}}^{t})$ determines a global
homotopy between $f$ and $g.$
\end{proof}

\begin{lemma}
\label{arcfix}Suppose $X\subset R^{2}$ is a 2 dimensional continuum and $%
\beta _{1},\beta _{2},...$is null sequence of closed arcs such that $%
int(\beta _{n})\subset int(X)$ , and $\partial \beta _{n}\subset Fr(X)$ and
such that $int(\beta _{n})\cap int(\beta _{m})=\emptyset $ if $n\neq m.$
Suppose $Y$ is a planar set. Suppose $f,g:X\rightarrow Y$ are maps such that 
$f_{Fr(X)}=g_{Fr(X)}.$ Suppose $p\notin \cup \beta _{n}$ and $f(p)=g(p).$
Suppose $f_{\ast }=g_{\ast }$ and $f_{\ast }:\pi _{1}(X,p)\rightarrow \pi
_{1}(Y,f(p))$ is the induced homomorphism. Let $Z=Fr(X)\cup (\cup \beta
_{n}).$ Then there exists a map $f^{\symbol{94}}:X\rightarrow Y$ such that $%
f^{\symbol{94}}$ is homotopic to $f$ and $f_{Z}^{\symbol{94}}=g_{Z}.$
\end{lemma}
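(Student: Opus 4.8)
Plan. The idea is to alter $f$ only inside a null sequence of small disks straddling the $\beta_n$, and to use $f_\ast=g_\ast$ to produce, over each $\beta_n$, exactly the homotopy data needed to drag $f|_{\beta_n}$ onto $g|_{\beta_n}$. First I would enclose the arcs: for each $n$, since $int(\beta_n)\subset int(X)$ and $\partial\beta_n\subset Fr(X)$, choose a small closed disk $R_n\subset X$ having $\beta_n$ as a spanning arc, with $int(R_n)\subset int(X)$ (so $R_n\cap Fr(X)=\partial\beta_n$), with $p\notin R_n$, with $\{R_n\}$ a null sequence, and with $R_n\cap\beta_m$ contained in the finite set of endpoints that $\beta_n$ and $\beta_m$ share (arcs incident to a common point of $Fr(X)$ are grouped and given a common small disk there, which is harmless since $\{\beta_n\}$ is null). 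Write $R_n=E_n^{+}\cup_{\beta_n}E_n^{-}$ for the two half-disks and $\gamma_n^{\pm}\subset\partial R_n$ for the two boundary arcs joining $\partial\beta_n$.

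Next I would extract the key homotopy datum from $f_\ast=g_\ast$. Fix $n$ and write $\partial\beta_n=\{a,b\}$; since $a,b\in Fr(X)$ we have $f(a)=g(a)$ and $f(b)=g(b)$. Choosing a path $\sigma$ in $X$ from $p$ to $a$ and $\tau$ from $b$ to $p$, the loop $\sigma\ast\beta_n\ast\tau$ lies in $X$, so $f_\ast=g_\ast$ gives $(f\sigma)(f\beta_n)(f\tau)\simeq(g\sigma)(g\beta_n)(g\tau)$ rel $f(p)$, which rearranges to
\[
f\beta_n\ \simeq\ \mu_a\cdot (g\beta_n)\cdot \nu_b\qquad(\text{rel endpoints}),
\]
where $\mu_a=[\overline{f\sigma}\cdot g\sigma]\in\pi_1(Y,f(a))$ and $\nu_b=[(g\tau)\cdot\overline{f\tau}]\in\pi_1(Y,f(b))$. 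Applying $f_\ast=g_\ast$ to a loop $\sigma\ast\overline{\sigma'}$ shows $\mu_a$ is independent of the choice of $\sigma$ (similarly $\nu_b$), so $\mu_a$ depends only on the point $a$; in particular two arcs sharing an endpoint receive the same correction loop there. Note these loops need not be trivial (e.g.\ for an annulus $X$ mapping to $\mathbf{R}^{2}\setminus\{0\}$ with $f,g$ agreeing on $\partial X$ but twisting differently in the interior), so one cannot in general hope to slide $f|_{\beta_n}$ onto $g|_{\beta_n}$ rel $Fr(X)$: the endpoints of $\beta_n$ must move during the homotopy.

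Then I would assemble $f^{\wedge}$. Near each endpoint $e$ of each arc, homotope $f$ on a small neighbourhood of $e$ in $X$ so that $e$ is carried once around a representative of $\mu_e$ (resp.\ $\nu_e$) and back to the identity; off a null sequence consisting of these neighbourhoods together with the $R_n$, keep $f$ fixed. On $\partial R_n\times I$ this prescribes a homotopy whose value on $\partial\beta_n\times I$ is the endpoint dragging, and by the displayed relation it extends over $\beta_n\times I$ to a homotopy whose time‑$1$ map carries $\beta_n$ onto $g\beta_n$; then, because planar sets are aspherical \cite{CCZ}, it extends over the two $3$‑cells $E_n^{\pm}\times I$ with control on the diameter of its image, exactly as in the proof of Lemma \ref{cherry}. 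The resulting time‑$1$ map $f^{\wedge}$ is homotopic to $f$, equals $g$ on every $\beta_n$, equals $f=g$ on $Fr(X)$ (the draggings return), and equals $f$ off $\bigcup R_n$ and the endpoint neighbourhoods; hence $f^{\wedge}|_{Z}=g|_{Z}$. Equivalently, Steps 2–3 assemble a homotopy $Z\times I\to Y$ from $f|_{Z}$ to $g|_{Z}$, and since $X\setminus Z$ is a null sequence of disks attached to $Z$ along their boundary circles, $(X,Z)$ has the homotopy extension property, so this homotopy extends to one starting at $f$ and ending at the desired $f^{\wedge}$.

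The hard part is the continuity of the assembled homotopy where $Fr(X)$ is not locally connected: one must be able to choose the correction homotopies near the accumulation set of $\{\partial\beta_n\}$ with trajectory diameters tending to $0$, after which the standard null‑sequence–plus–uniform‑continuity argument of Lemmas \ref{pointadjust}, \ref{frfix}, \ref{fg}, \ref{cherry} makes $f^{\wedge}$ well defined and continuous. Since $\{\beta_n\}$ is a null sequence the two endpoints of a given arc become arbitrarily close, and — $f$ and $g$ agreeing on $Fr(X)$ — the combined datum $\mu_{a_n}\cdot(g\beta_n)\cdot\nu_{b_n}$ appearing over $\beta_n$ can be arranged to shrink with $n$; organizing this, together with the finitely many arcs running through each small disk and the arcs sharing endpoints (handled via Lemma \ref{treelem}), is the technical core of the argument.
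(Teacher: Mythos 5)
Your diagnosis of the paper's own proof is correct and worth recording: the claim that ``since $f_{\ast}=g_{\ast}$, $f_{\beta_{n}}$ and $g_{\beta_{n}}$ are path homotopic fixing $\partial\beta_{n}$'' does not follow from the hypotheses. Your annulus counterexample is genuine: take $X=\{1\le|z|\le 2\}$, $Y=R^{2}\setminus\{0\}$, $f(z)=z$, $g(z)=z\,e^{2\pi i(|z|-1)}$, $\beta_{1}=[1,2]$, and $p$ on the inner circle. Then $f_{Fr(X)}=g_{Fr(X)}$ and $f_{\ast}=g_{\ast}$, yet $(f\beta_{1})\cdot(\overline{g\beta_{1}})$ has winding number $-1$ about the origin and so is essential in $Y$; thus $f_{\beta_{1}}$ and $g_{\beta_{1}}$ are not path homotopic rel endpoints. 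The obstruction is precisely your correction loop $\mu_{a}$ at an endpoint $a$ lying in a different path component of $Fr(X)$ than $p$, and it kills the paper's step of defining $f^{\symbol{94}}$ on the half-disks by matching $g_{\beta_{n}}$ to $f_{\gamma^{\pm}_{n}}\simeq f_{\beta_{n}}$.

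However, your proposed repair has an unresolved gap of its own, which you yourself label ``the hard part.'' Dragging the endpoint $a$ around a representative of $\mu_{a}$ works over a single arc, but to assemble a continuous homotopy over the null sequence $\{\beta_{n}\}$ the finger-move trajectories at the endpoints must shrink with $n$; that is, the classes $\mu_{a_{n}}$ must admit representatives of diameter tending to $0$. The smallness you cite for the combined datum $\mu_{a_{n}}\cdot g\beta_{n}\cdot\nu_{b_{n}}$ (it equals $[f\beta_{n}]$, hence is small) does not control $\mu_{a_{n}}$ individually. Indeed there is no automatic control: if $a_{n}\to z\in Fr(X)$ and $\rho_{n}$ is a short path in $X$ from $z$ to $a_{n}$, then $\mu_{a_{n}}=[\overline{f\rho_{n}}\cdot\mu_{z}\cdot g\rho_{n}]$, a short-path conjugate of the single class $\mu_{z}$; if $\mu_{z}$ is essential and every representative near $f(z)$ in $Y$ has diameter bounded below, the finger moves at $a_{n}$ cannot be made to shrink and the assembled homotopy fails to be continuous at $z$. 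Showing that $\mu_{z}$ is trivial, or admits arbitrarily small representatives, at each accumulation point $z$ of $\bigcup\partial\beta_{n}$ is the missing step; you assert it ``can be arranged'' without an argument, and this is exactly where a complete proof of the lemma (and, in a different form, of the paper's own version) is still needed.
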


\begin{proof}
`Thicken' each $\beta _{n}$ into a closed topological disk $D_{n}$ (such
that $p\notin D_{n})$ and $int(\beta _{n})\subset int(D_{n})\subset int(X),$
and $D_{n}\cap Fr(X)=\partial \beta _{n},$ and if $n\neq m$ then $D_{n}\cap
D_{m}\subset \partial \beta _{n}\cup \partial \beta _{m},$ and $%
diam(D_{n})\rightarrow 0.$ Since $f_{\ast }=g_{\ast }$ it follows that $%
f_{\beta _{n}}$ and $g_{\beta _{n}}$ are path homotopic in $Y$ (fixing $%
\partial \beta _{n}$ throughout the homotopy).

Note $\beta _{n}$ is a strong deformation retract of $D_{n}$ and if (the
topological semicircle) $\gamma $ is the closure of either component of $%
(\partial D_{n})\backslash (\partial \beta _{n})$ then $f_{\gamma }$ is path
homotopic to $f_{\beta _{n}}.$ Consequently we can define $f_{D_{n}}^{%
\symbol{94}}:D_{n}\rightarrow Y$ so that $f_{\partial D_{n}}^{\symbol{94}%
}=f_{\partial D_{n}}$ and so that $f_{\beta _{n}}^{\symbol{94}}=$ $g_{\beta
_{n}}$. By Lemma \ref{retract} we can also arrange $diam(f^{\symbol{94}%
}(D_{n}))<diam(f(\partial D_{n})\cup g(\beta _{n})).$

Now sew together two copies of $D_{n}$ joined along $\partial D_{n}$ and
notice$\ f_{D_{n}}^{\symbol{94}}\cup f_{D_{n}}:S^{2}\rightarrow Y$
determines a map of the 2 sphere into $Y.$ Moreover $Z_{n}=f^{\symbol{94}%
}(D_{n})\cup f(D_{n})$ is a Peano continuum (since $S^{2}$ is a Peano
continuum). Since $Z_{n}$ is aspherical (\cite{CCZ}) $f_{D_{n}}^{\symbol{94}%
} $ and $f_{D_{n}}$ are homotopic in $Z_{n}$ (via a homotopy $f_{n}^{t}$
such that $f_{n|\partial D_{n}}^{t}=f_{\partial D_{n}}$).

Define $f_{X\backslash (\cup int(D_{n}))}^{\symbol{94}}=f_{X\backslash (\cup
int(D_{n}))}.$ For large $n$ the homotopy $f_{n}^{t}$ has small image (since 
$f$ and $g$ are uniformly continuous and both $D_{n}$ and $Z_{n}$ are null
sequences).

Thus the union of the homotopies $f_{n}^{t}$ shows $f^{\symbol{94}}$ and $f$
are homotopic.
\end{proof}

\subsection{Chopping up simply connected sets into small disks with crosscuts%
}

Given a PL planar disk $D$ with finitely marked points $Y\subset \partial D$
we wish to partition $D$ using crosscuts (with disjoint interiors)
connecting distinct points of $Y,$ and to obtain control of the diameter of
the regions bounded by the crosscuts (Theorem \ref{markedchop}). Combined
with a standard construction from the theory of prime ends (Lemma \ref
{cancat}), we see how to subdivide simply connected open sets $U\subset S^{2}
$ into a null sequence of topological disks whose boundaries contain points
of $\partial U$ (Theorem \ref{main2}).

Define the planar set $Y\subset R^{2}$ as $`\varepsilon $ thin' if (letting $%
B(x,\varepsilon )\subset R^{2}$ denote the round open disk of radius $%
\varepsilon $) for each $x\in Y,$ $B(x,\varepsilon )\cap (R^{2}\backslash
Y)\neq \emptyset .$

If $D$ is a topological disk an arc $\alpha \subset D$ is a \textbf{spanning
arc }if $int(\alpha )\subset int(D)$ and $\partial \alpha \subset \partial
D. $ Let $S(v,\varepsilon )$ denote the round circle of radius $\varepsilon $
centered at $v.$

\begin{lemma}
\label{ethinlem}Suppose $...D_{3}\subset D_{2}\subset D_{1}\subset R^{2}$ is
a sequence of closed topological disks. Suppose $\varepsilon >0.$ Then there
exists $N$ so that if $n\geq N$ and $U$ is a component of $D_{n}\backslash
D_{n+1}$ then $U$ is $\varepsilon $ thin.
\end{lemma}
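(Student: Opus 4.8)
The plan is to argue by contradiction: a component $U$ of $D_{n}\backslash D_{n+1}$ that fails to be $\varepsilon$ thin must contain a full open ball of radius $\varepsilon$, and a compactness argument then clashes with the fact that the $D_{n}$ are nested and decrease to $X=\cap _{m\geq 1}D_{m}$.

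First I would unpack the failure of $\varepsilon$ thinness. If $U$ is a component of $D_{n}\backslash D_{n+1}$ which is not $\varepsilon$ thin, then by definition there is a point $x\in U$ with $B(x,\varepsilon )\cap (R^{2}\backslash U)=\emptyset $, i.e. $B(x,\varepsilon )\subset U$. Since $U\subset D_{n}\backslash D_{n+1}$, this ball satisfies $B(x,\varepsilon )\subset D_{n}$ and $B(x,\varepsilon )\cap D_{n+1}=\emptyset $; in particular $B(x,\varepsilon )\cap X=\emptyset $, because $X\subset D_{n+1}$.

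Next I would set up the contradiction. Suppose the Lemma fails. Then the set of indices $n$ for which some component of $D_{n}\backslash D_{n+1}$ is not $\varepsilon$ thin is infinite, say $n_{1}<n_{2}<\cdots $, and for each $k$ the previous paragraph yields a point $x_{k}$ with $B(x_{k},\varepsilon )\subset D_{n_{k}}$ and $B(x_{k},\varepsilon )\cap D_{n_{k}+1}=\emptyset $. All $x_{k}$ lie in the compact set $D_{1}$, so after passing to a subsequence $x_{k}\rightarrow x$. Fix a radius $r$ with $0<r<\varepsilon $ and fix $m$. Once $k$ is large enough that $\left| x-x_{k}\right| <\varepsilon -r$ and $n_{k}\geq m$, we have $\overline{B(x,r)}\subset B(x_{k},\varepsilon )\subset D_{n_{k}}\subset D_{m}$. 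As $m$ was arbitrary, $\overline{B(x,r)}\subset \cap _{m\geq 1}D_{m}=X$. On the other hand $X\subset D_{n_{k}+1}$ is disjoint from $B(x_{k},\varepsilon )$, hence disjoint from $\overline{B(x,r)}$ for all large $k$; since $\overline{B(x,r)}$ is nonempty this contradicts $\overline{B(x,r)}\subset X$.

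I do not expect a serious obstacle here. The only points needing a little care are extracting a genuine open $\varepsilon$ ball inside a non-$\varepsilon$-thin $U$ (this is immediate from the definition) and keeping straight that this ball simultaneously lies in $D_{n_{k}}$ and misses $D_{n_{k}+1}\supset X$. Note that the topological-disk hypothesis is never used — only that the $D_{n}$ are nested compacta — but the statement is phrased for the disks at hand.
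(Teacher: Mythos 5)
Your proof is correct and takes essentially the same route as the paper's: assume the conclusion fails along a subsequence, extract a convergent subsequence of witness points by compactness of $D_1$, and derive a contradiction by showing the limit point both lies in and fails to lie in $\cap_n D_n$. Your version is slightly more careful in spelling out the open ball of radius $\varepsilon$ inside a non-$\varepsilon$-thin $U$ and the auxiliary radius $r<\varepsilon$, and your closing observation that the argument uses only that the $D_n$ are nested compacta (not that they are disks) is accurate.
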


\begin{proof}
Suppose otherwise to obtain a contradiction. There exists an increasing
sequence $n_{k}\rightarrow \infty $ and $u_{n_{k}}\in U_{n_{k}}$ so that $%
U_{n_{k}}$ is a component of $D_{n_{k}}\backslash D_{n_{k}+1}$ and $%
B(u_{n_{k}},\varepsilon /2)\subset U_{n_{k}}.$ Let $z$ be a subsequential
limit of $\{u_{n_{k}}\}.$ Then $B(z,\frac{\varepsilon }{4})\subset U_{n_{k}}$
for all sufficiently large $k.$ Thus $z\notin \cap _{n=1}^{\infty }D_{n}$
and $z\in \cap _{n=1}^{\infty }D_{n}$ and we have a contradiction.
\end{proof}

If $S\subset R^{2}$ is a simple closed curve and $P\subset S$ is finite then 
$P$ is $`\varepsilon -$ dense' in $S$ if for each pair of consecutive
clockwise points $x<y\in P$ each clockwise arc $\alpha _{xy}\subset S$
connecting $x$ to $y$ satisfies $diam(\alpha _{xy})<\varepsilon .$

The inequalities in Lemma \ref{span} and Theorem \ref{markedchop} are not
sharp.

\begin{lemma}
\label{span}If $D\subset R^{2}$ is any $\varepsilon $ thin topological disk,
then there exist finitely many pairwise disjoint spanning arcs $\alpha
_{1},\alpha _{2},..$ such that if $U$ is a component of $D^{2}\backslash
(\cup \alpha _{n}),$ then $diam(U)<12\varepsilon .$
\end{lemma}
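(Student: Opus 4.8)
The plan is to cut $D$ by finitely many short round‑circle‑arc crosscuts, one ``surrounding'' each point of a finite net in $\partial D$, so that every complementary region is trapped inside a ball of radius $4\varepsilon$. First I would exploit $\varepsilon$‑thinness: for every $x\in D$ the ball $B(x,\varepsilon)$ meets $R^{2}\backslash D$, and since $\overline{R^{2}\backslash D}\cap D=\partial D$, every point of $D$ lies within $\varepsilon$ of $\partial D$ (in particular $D$ contains no open disk of radius $\varepsilon$). Covering the compactum $D$ by the balls $\{B(x,\varepsilon):x\in D\}$, replacing each centre by a boundary point within $\varepsilon$ of it, and extracting a finite subcover, I obtain points $p_{1},\dots ,p_{k}\in\partial D$ with $D\subset\bigcup_{i=1}^{k}B(p_{i},2\varepsilon)$. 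If $\operatorname{diam}(D)\le 8\varepsilon$ the empty family of arcs already works, so assume otherwise; then $D$ lies in no ball of radius $4\varepsilon$.

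The geometric core is the following: for each $i$ there is a spanning arc $\gamma_{i}$ of $D$ lying in the annulus $\{\,3\varepsilon<|z-p_{i}|<4\varepsilon\,\}$ which separates, inside $D$, the set $D\cap\overline{B(p_{i},3\varepsilon)}$ from $D\backslash B(p_{i},4\varepsilon)$. Writing $D_{i}^{\mathrm{in}}$ for the component of $D\backslash\gamma_{i}$ meeting $B(p_{i},3\varepsilon)$, disjointness of the two sides forces $D_{i}^{\mathrm{in}}\cap(D\backslash B(p_{i},4\varepsilon))=\emptyset$, hence $D_{i}^{\mathrm{in}}\subset B(p_{i},4\varepsilon)$ and $\operatorname{diam}(D_{i}^{\mathrm{in}})\le 8\varepsilon$. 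Granting these arcs, if $U$ is a component of $D\backslash\bigcup_{i}\gamma_{i}$ and $x\in U$ is chosen with $x\in B(p_{j},2\varepsilon)$, then $x\in D_{j}^{\mathrm{in}}$, and $U$, being connected and disjoint from $\gamma_{j}$, is contained in $D_{j}^{\mathrm{in}}\subset B(p_{j},4\varepsilon)$, so $\operatorname{diam}(U)\le 8\varepsilon$. Existence of $\gamma_{i}$ is the standard fact that two disjoint compacta of a topological disk separated by the trace of an annular region are separated, within that region, by a crosscut — the same elementary crosscut/prime‑end circle of ideas invoked for \lemref{cancat}; concretely one takes the component of $D\cap\{\,3\varepsilon<|z-p_{i}|<4\varepsilon\,\}$ whose closure meets both bounding circles and winds around $p_{i}$, and reads off a spanning arc from it.

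It remains to make the $\gamma_{i}$ pairwise disjoint, since two of them can meet when the balls $B(p_{i},4\varepsilon)$ overlap (and then only at finitely many crossings). I would handle this either (i) recursively, choosing the next crosscut inside an as‑yet‑oversized complementary region of the arcs already selected — such a region is again a subdisk of $D$ whose frontier stays within $\varepsilon$ of its own complement, hence is again covered by finitely many $2\varepsilon$‑balls about its boundary points, so disjointness from earlier arcs is automatic — or (ii) first amalgamating the $p_{i}$ into clusters of mutually nearby points and running one separating crosscut per cluster. Either route replaces $\bigcup_{i}\gamma_{i}$ by a pairwise disjoint family $\alpha_{1},\dots ,\alpha_{N}$ of spanning arcs; re‑running the trapping argument (now permitting a bounded number of adjacent balls to be visited before an enclosing $D_{i}^{\mathrm{in}}$ is hit, and absorbing the losses from overlaps and from an arbitrarily small general‑position perturbation as in \secref{perturbarc}) yields $\operatorname{diam}(U)<12\varepsilon$ for every component $U$ of $D\backslash\bigcup_{m}\alpha_{m}$. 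The gap between $8\varepsilon$ and $12\varepsilon$ is precisely the room this bookkeeping consumes, consistent with the acknowledged non‑sharpness of the inequality.

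The main obstacle I anticipate is twofold. First, justifying the existence of the separating annular crosscuts $\gamma_{i}$ when $D$ is merely a topological (possibly wildly embedded) disk and $\partial D$ an arbitrary simple closed curve: one cannot simply intersect with a circle, since the trace there need be neither a single arc nor finite, and one must instead isolate inside the annular region the single crosscut that does the separating. Second, organizing the $\gamma_{i}$ — via the recursion or the clustering — into a genuinely pairwise disjoint family without letting the diameter bound drift past $12\varepsilon$; the diameter accounting for the recursively produced (or merged) regions is the delicate point, while the underlying estimates are otherwise comfortably slack.
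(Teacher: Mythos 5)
Your central construction --- a \emph{single} spanning arc $\gamma_i\subset D$ lying in $\{3\varepsilon<|z-p_i|<4\varepsilon\}$ that separates $D\cap\overline{B(p_i,3\varepsilon)}$ from $D\backslash B(p_i,4\varepsilon)$ inside $D$ --- does not exist in general, and the ``standard fact'' you invoke for it is not true as stated. A spanning arc of a topological disk yields exactly two complementary pieces, so such a $\gamma_i$ would have to place the whole of $D\cap\overline{B(p_i,3\varepsilon)}$ on one side and the whole of $D\backslash B(p_i,4\varepsilon)$ on the other; but $D\cap\overline{B(p_i,3\varepsilon)}$ need not be connected, and its components can be reachable from one another inside $D$ only by passing through $D\backslash B(p_i,4\varepsilon)$. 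Concretely, take $D$ to be a thin comb: a thin horizontal base carrying seven parallel vertical teeth of length far exceeding $\varepsilon$, each of width $\varepsilon/10$, spaced $\varepsilon$ apart, and put $p_i$ at the tip of the middle tooth. Then $D\cap\overline{B(p_i,3\varepsilon)}$ has at least five components (one per nearby tooth tip), while $D\backslash B(p_i,4\varepsilon)$ contains the connected base; since a crosscut must stay inside $D$, a crosscut in the annulus can sever at most one tooth, so no single arc can do the separating. Your fallback description (``take the component of the annular trace that winds around $p_i$'') also yields nothing here: $p_i\in\partial D$, every Jordan curve in $int(D)$ bounds a Jordan domain contained in $int(D)$ (since $int(D)$ is open, bounded, and simply connected), hence has winding number $0$ about $p_i$, so no component of $D\cap\{3\varepsilon<|z-p_i|<4\varepsilon\}$ winds around $p_i$ at all.

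The lemma you want is true if you replace each $\gamma_i$ by a \emph{finite family} $\Gamma_i$ of crosscuts jointly separating the two sets --- the trapping step still gives $U\subset B(p_j,4\varepsilon)$ --- but you would then owe (i) a proof that such a finite separating family exists inside the annular region when $\partial D$ is an arbitrary, possibly wild, Jordan curve (the annular trace can have infinitely many components), and (ii) a real disjointification argument with diameter accounting; the ``recursion or clustering'' sketch is too loose to certify $12\varepsilon$. The paper's proof is built precisely to dodge these issues: tile the plane by $5\varepsilon$-squares, use $\varepsilon$-thinness to push $D$ off every tile corner by a $2\varepsilon$-small ambient homeomorphism $h$ so that $h(D)$ is a PL disk crossing each grid edge transversely and finitely often, read off the spanning arcs as the components of $h(D)\cap(\cup\partial T_n)$, and pull back by $h^{-1}$. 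The straight grid lines furnish pairwise-disjoint crosscuts and small complementary pieces simultaneously, with the $4\varepsilon$ perturbation cost accounting for the slack between $5\sqrt{2}\,\varepsilon$ and $12\varepsilon$.
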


\begin{proof}
Tile the plane by squares of sidelength $5\varepsilon .$ Note there exist
arbitrarily small perturbations $E$ of $D$ so that $E$ is a PL disk, and so
that if $v\in T$ is a corner of the tile $T$ then $v\notin \partial E,$ and
so that each component $A\subset E\cap \overline{B(v,2\varepsilon )}$ is a
closed topological disk. Thus, wolog we may assume $D$ also enjoys the
aforementioned properties of $E$. Suppose $v$ is the corner of a tile $T$
and suppose $v\in int(D).$ Then, since $D$ is $\varepsilon $ thin, let the
open arc $\gamma $ be a nonempty component of $S(v,\varepsilon )\backslash D$
and let $B$ denote the component of $\overline{B(v,\varepsilon )}\backslash
D $ such that $\gamma \subset B.$ Now manufacture a homeomorphism $h:%
\overline{B(v,\varepsilon )}\rightarrow \overline{B(v,\varepsilon )}$
(fixing $\partial \overline{B(v,\varepsilon )}$ pointwise) so that $v\in
h(B).$ Applying this construction at the corner of each tile, ultimately we
can obtain a $2\varepsilon $ homeomorphism $h:R^{2}\rightarrow R^{2}$ so
that $h(D)$ is a PL disk and so that $v\notin h(D)$ for all tile corners $v,$
and so that $\partial h(D)$ crosses each tile edge finitely many times and
transversely.

Let $E=h(D).$ Recall the tiles $T_{1},T_{2},...$and note each component $%
A\subset E\backslash (\cup _{n=1}^{\infty }\partial T_{n})$ satisfies $%
diam(A)<5\sqrt{2}\varepsilon .$ We obtain spanning arcs $\beta _{1},\beta
_{2},,,$ taking the components of $E\cap (\cup _{n=1}^{\infty }(\partial
T_{n})).$ Now let $\alpha _{n}=h^{-1}(\beta _{n}).$ Let $U$ be a component
of $D\backslash (\cup \beta _{n})$ and note $diam(h^{-1}U)<5\sqrt{2}%
\varepsilon +4\varepsilon <12\varepsilon .$
\end{proof}

\begin{lemma}
\label{clock}Suppose the finite set $P$ is $\varepsilon $ dense in $\partial
D$ and $Q\subset \partial D$ is finite. Then there exists a monotone map:$%
f:D\rightarrow D$ such that $f(Q)\subset P$ and $d(f(x),x)<\varepsilon $ for
all $x\in D$ and such that $f$ maps $int(D)$ homeomorphically onto $int(D).$
\end{lemma}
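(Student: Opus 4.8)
The plan is to build $f$ in two stages: first define the boundary restriction $g:=f_{\partial D}:\partial D\to \partial D$ from explicit collapsing data on the circle $\partial D$, and then cone $g$ inward through a very thin collar of $\partial D$ in $D$, declaring $f$ to be the identity on the complementary disk.

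\textbf{Building $g$.} I would enumerate $P=\{p_{1},\dots ,p_{k}\}$ in cyclic order around $\partial D$, and for indices read mod $k$ write $[p_{i},p_{i+1}]$ for the closed sub-arc of $\partial D$ joining consecutive points of $P$ that meets $P$ only in its endpoints; by the $\varepsilon $-density hypothesis $diam([p_{i},p_{i+1}])<\varepsilon $. Each $q\in Q\backslash P$ lies in a unique open gap $(p_{i},p_{i+1})$, and I assign it the target $p_{i}$. Let $c_{i}$ be the point of $\{p_{i}\}\cup \{q\in Q:q\text{ targets }p_{i}\}$ reached last when one travels clockwise from $p_{i}$, let $A_{i}$ be the closed sub-arc of $\partial D$ from $p_{i}$ clockwise to $c_{i}$ (so $A_{i}=\{p_{i}\}$ when nothing targets $p_{i}$), and let $B_{i}$ be the complementary closed arc from $c_{i}$ clockwise to $p_{i+1}$. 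Then $A_{i}\cup B_{i}=[p_{i},p_{i+1}]$, the arcs $A_{i}$ are pairwise disjoint, $P\cup Q\subset \cup _{i}A_{i}$, and each $B_{i}$ is nondegenerate since $c_{i}\neq p_{i+1}$. I would define $g$ to collapse each $A_{i}$ to $p_{i}$ and to carry each $B_{i}$ by an order-preserving homeomorphism onto $[p_{i},p_{i+1}]$; the two prescriptions agree at the shared endpoints $c_{i}$ and $p_{i+1}$, so $g$ is a well defined continuous orientation-preserving monotone degree-one self-map of $\partial D$ with $g^{-1}(p_{i})=A_{i}$. Since $x$ and $g(x)$ always lie in a common arc $[p_{i},p_{i+1}]$, we get $d(g(x),x)\leq m$, where $m:=\max _{j}diam([p_{j},p_{j+1}])<\varepsilon $ (a maximum of finitely many numbers each less than $\varepsilon $), and $g(Q)\subset P$ by construction.

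\textbf{Coning $g$ inward.} Because $g$ is an orientation-preserving monotone degree-one self-map of $\partial D$ whose only nondegenerate point preimages are the disjoint arcs $A_{i}$, there is a homotopy $g_{s}$, $s\in [0,1]$, from $g_{0}=g$ to $g_{1}=id_{\partial D}$ through orientation-preserving homeomorphisms $g_{s}$ for $s>0$, depending continuously on $(s,x)$, and (interpolating linearly between lifts of $g$ and of the identity to the real line) chosen so that $g_{s}(x)$ lies in the same arc $[p_{i},p_{i+1}]$ as $x$ for all $s$ and $x$; in particular $d(g_{s}(x),x)\leq m<\varepsilon $ throughout. Fix $\delta >0$ with $m+2\delta <\varepsilon $. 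Since $\partial D$ is a simple closed curve it has arbitrarily thin closed collar neighbourhoods in $D$, so I may pick a closed collar $C\subset D$ of $\partial D$ and a homeomorphism $\kappa :\partial D\times [0,1]\to C$ with $\kappa (x,0)=x$ and $d(\kappa (x,s),x)<\delta $ for all $(x,s)$; set $C^{\prime }=\kappa (\partial D\times \{1\})$ and $D_{0}=\overline{D\backslash C}$, a closed disk with $\partial D_{0}=C^{\prime }$. Define $f(\kappa (x,s))=\kappa (g_{s}(x),s)$ on $C$ and $f=id$ on $D_{0}$; these agree on $C^{\prime }$ because $g_{1}=id$, so $f:D\to D$ is continuous.

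\textbf{Verification, and the main obstacle.} It then remains to check the routine facts: $f_{\partial D}=g$, hence $f(Q)\subset P$; the fibres of $f$ are singletons inside $int(D)$ and the arcs $A_{i}$ on $\partial D$, so $f$ is monotone; $f$ restricts to a continuous bijection of $int(D)$ onto itself (it is the identity on $D_{0}$ and a bijection of $\kappa (\partial D\times (0,1])$ onto itself because each $g_{s}$ with $s>0$ is a homeomorphism), hence, being a continuous injection of the open set $int(D)$ into $R^{2}$, an open map by invariance of domain and therefore a homeomorphism of $int(D)$ onto $int(D)$; and $d(f(x),x)=0$ for $x\in D_{0}$, while for $x=\kappa (y,s)\in C$,
\begin{equation*}
d(f(x),x)\leq d(\kappa (g_{s}(y),s),g_{s}(y))+d(g_{s}(y),y)+d(y,\kappa (y,s))<\delta +m+\delta <\varepsilon .
\end{equation*}
The step I expect to be the main obstacle is precisely this last displacement estimate: the boundary map $g$ necessarily stretches each gap $B_{i}$ across the whole arc $[p_{i},p_{i+1}]$, so $g$ is not supported in a small neighbourhood of the collapsed arcs and cannot be damped out locally; the remedy is the quantitative bookkeeping above -- keeping the collar thickness below $(\varepsilon -m)/2$ so that the two collar errors plus the boundary error $\leq m$ still total less than $\varepsilon $ -- combined with the fact that the interpolating homeomorphisms $g_{s}$ keep every point within its own arc $[p_{i},p_{i+1}]$.
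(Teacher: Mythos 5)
Your proof is correct, and it takes a genuinely different route from the paper's. The paper's construction is local: for each $p\in P$ it chooses a small arc $\gamma_{p}\subset\partial D$ of diameter $<\varepsilon$ containing $p$ and the points of $Q$ assigned to $p$, thickens each $\gamma_{p}$ into a small disk $D_{p}\subset D$, and defines $f$ to be the identity off $\cup_{p}D_{p}$ while, inside each $D_{p}$, collapsing a boundary subarc $\beta_{p}$ to $p$ and stretching $D_{p}\setminus\beta_{p}$ homeomorphically over $D_{p}\setminus\{p\}$. The displacement bound and the monotone/interior-homeomorphism claims are immediate because all the action is confined to the pairwise disjoint disks $D_{p}$. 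Your version instead builds one global monotone degree-one map $g$ on $\partial D$ (collapse $A_{i}\to p_{i}$, stretch $B_{i}$ over $[p_{i},p_{i+1}]$) and cones it inward through a thin collar, damping to the identity on the inner boundary. What you lose is the "identity outside a small neighbourhood of $P$" feature — as you note, $g$ stretches each whole gap $B_{i}$ — and so the displacement bound has to come entirely from the $\varepsilon$-density of $P$ plus an explicit collar-thickness budget, which you track correctly. What you gain is a cleaner verification of the other two conclusions: the interpolating lifts $g_{s}$ make it transparent that $f$ is a homeomorphism on $int(D)$ and that the only nondegenerate fibres are the arcs $A_{i}$ on $\partial D$, whereas the paper's description of the map inside each $D_{p}$ (which states both that $f$ fixes $\partial D_{p}$ pointwise and that $f$ maps $\beta_{p}\subset\partial D_{p}$ to $p$) has to be read with some charity. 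Both approaches are sound; yours is more systematic, the paper's is more economical.
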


\begin{proof}
For each $q\in Q$ let $f(q)=p\in P$ so that $p$ is the nearest clockwise
neighbor to $q$. For each $p\in P$ select an arc $\gamma _{p}\subset
\partial D$ so that $p\cup f^{-1}(p)\subset int(\gamma _{p})$ and so that $%
diam(\gamma _{p})<\varepsilon .$ We can also arrange that $\gamma _{p}\cap
\gamma _{r}=\emptyset $ if $p\neq r.$ Thicken the arcs $\{\gamma _{p}\}$
into a closed pairwise disjoint topological disks $\{D_{p}\}$ such that $%
\cup _{p\in P}D_{p}\subset D.$ Let $f$ fix $D\backslash (\cup D_{p})$
pointwise. For each $\gamma _{p}$ let $\beta _{p}\subset int(\gamma _{p})$
be closed arc such that $p\cup f^{-1}(p)\subset int(\gamma _{p}).$ Let $f$
fix $\partial D_{i}$ pointwise, let $f$ map $\beta _{p}$ to $p$ and let $f$
map $D_{p}\backslash \beta _{p}$ homeomorphically onto $D_{p}\backslash p.$
\end{proof}

\begin{theorem}
\label{markedchop}There exists $M>0$ so that if $D\subset R^{2}$ is an $%
\varepsilon $ thin topological disk and the finite set $P\subset \partial D$
is $\varepsilon $ dense then there exist spanning arcs $\beta _{1},\beta
_{2},...$ such that $int(\beta _{n})\cap int(\beta _{m})=\emptyset $ and
such that $P=\cup (\partial \beta _{n})$ and such that each component $%
U\subset D^{2}\backslash (\cup \beta _{n})$ is an open Jordan disk and $%
diam(U)<14\varepsilon .$
\end{theorem}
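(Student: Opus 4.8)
The plan is to show that $M=14$ works, by combining the two previous results: \lemref{span} chops the $\varepsilon$ thin disk $D$ into pairwise disjoint spanning arcs whose complementary regions have diameter $<12\varepsilon$, and \lemref{clock} supplies a monotone self-map of $D$ that shoves the (finitely many) endpoints of those arcs onto the prescribed set $P$ while moving every point less than $\varepsilon$. The extra $2\varepsilon$ in the bound $14\varepsilon$ is exactly twice this displacement.

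Concretely, first apply \lemref{span} to $D$ to get pairwise disjoint spanning arcs $\alpha_1,\dots,\alpha_k$ with every component of $D\setminus\bigcup_i\alpha_i$ of diameter $<12\varepsilon$; write $Q_0=\bigcup_i\partial\alpha_i\subset\partial D$. To make every point of $P$ an endpoint, adjoin finitely many further pairwise disjoint spanning arcs: each $p\in P\setminus Q_0$ lies on the boundary of a unique complementary Jordan region $W_p$, and inside $W_p$ one routes a spanning arc of $D$ joining $p$ to a suitable endpoint of some $\alpha_i$ lying on $\partial W_p$, keeping these new arcs disjoint from each other and (except at endpoints) from the $\alpha_i$; each such arc merely splits $W_p$ into two sub-pieces, so the enlarged family $\mathcal A$ still has all complementary pieces of diameter $<12\varepsilon$, and its endpoint set $Q$ satisfies $P\subseteq Q$. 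Now apply \lemref{clock} to $D$, to $P$ (which is $\varepsilon$ dense by hypothesis), and to the finite set $Q$, obtaining a monotone map $f:D\to D$ with $f(Q)\subseteq P$, with $d(f(x),x)<\varepsilon$ for all $x$, and with $f$ a homeomorphism of $int(D)$ onto $int(D)$.

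Put $\beta_n=f(\gamma_n)$ as $\gamma_n$ ranges over $\mathcal A$. Since $f|_{int(D)}$ is a homeomorphism, the $int(\beta_n)\subset int(D)$ are pairwise disjoint and each $\beta_n$ is again a spanning arc; since $f$ fixes $P$ pointwise and $f(Q)\subseteq P$ while $P\subseteq Q$, we get $\bigcup_n\partial\beta_n=f(Q)=P$. Because $f$ is monotone on $\overline D$ and a homeomorphism on $int(D)$, it carries the decomposition of $D$ by $\mathcal A$ bijectively onto the decomposition by $\{\beta_n\}$: every component $U$ of $D\setminus\bigcup_n\beta_n$ is $f(V)$ for a component $V$ of $D\setminus\bigcup\mathcal A$, and for $x,y\in V$ one has $|f(x)-f(y)|<|x-y|+2\varepsilon$, so $diam(U)\le diam(V)+2\varepsilon<14\varepsilon$. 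Finally $\overline U\setminus U=f(\overline V\setminus V)$ is the monotone image of the simple closed curve $\overline V\setminus V$; since the finitely many boundary arcs collapsed by $f$ are sent to \emph{distinct} points of $P$, this image is again a simple closed curve, so $U$ is an open Jordan disk in the sense of \lemref{cherry}. Thus $M=14$ suffices.

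The one genuinely delicate point is guaranteeing that $f$ does not pinch any arc of $\mathcal A$ into a loop, i.e. that the two endpoints of each $\gamma_n$ are not carried to the same point of $P$ --- otherwise the corresponding $\beta_n$ fails to be a spanning arc, and the argument that $U$ is a Jordan disk also uses this. Since \lemref{clock} collapses only boundary arcs of diameter $<\varepsilon$, this is arranged by first putting the arc family in general position relative to $P$ along $\partial D$: any arc both of whose endpoints lie in one $\varepsilon$-short boundary sub-arc is rerouted, within the complementary region it bounds, so that its endpoints straddle a point of $P$, and the auxiliary arcs built above are chosen so that their endpoints already have this property (for instance the $\alpha_i$-endpoint on $\partial W_p$ is taken on the clockwise side of $p$). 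This is routine once one has the $2\varepsilon$ of slack built into $14\varepsilon$; everything else is the standard bookkeeping of disjoint crosscuts in a disk.
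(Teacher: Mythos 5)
Your proof follows essentially the same route as the paper's: apply Lemma \ref{span} to chop $D$ into regions of diameter $<12\varepsilon$ by disjoint spanning arcs, then apply Lemma \ref{clock} to push the endpoint set onto $P$, costing at most $2\varepsilon$ in diameter. The only organizational difference is the order of the ``cover all of $P$'' step: the paper pushes the original arc endpoints into $P$ via $f$ and then appends extra spanning arcs afterward to pick up the leftover points of $P$, while you adjoin arcs to make the endpoint set $Q$ contain $P$ \emph{before} applying $f$. Both orders work.

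The genuine value added by your write-up is that you flag the pinching issue: $f$ may identify the two endpoints of a single spanning arc (when both lie in one complementary interval $(p',p]$ of $P$ on $\partial D$), turning $f(\alpha_n)$ into a loop rather than a spanning arc and potentially pinching the boundary of a complementary region into a wedge of circles instead of a simple closed curve. The paper's proof does not address this at all, so you have correctly identified a gap. Your proposed fix (reroute any such arc so its endpoints straddle a point of $P$, and choose the auxiliary arcs' endpoints on the clockwise side of the relevant $p$) is the right kind of repair but is stated loosely: sliding an endpoint past $p$ within the region the arc bounds can be obstructed by another spanning arc terminating at or near $p$, so a nesting argument among the arcs whose endpoints share a single complementary interval of $P$, or a preliminary perturbation of the endpoint set before invoking Lemma \ref{clock}, is needed to make it airtight. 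In substance, though, your proposal is faithful to the paper's argument and improves on its treatment of the collapse step.
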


\begin{proof}
Obtain disjoint closed spanning arcs $\alpha _{1},\alpha _{2},$ as in Lemma 
\ref{span} so that each component $U\subset D^{2}\backslash (\cup \alpha
_{n})$ has diameter at most $12\varepsilon .$

Apply Lemma \ref{clock} to obtain a monotone map:$f:D\rightarrow D$ such
that $f(\cup \alpha _{n})\subset P$ and $d(f(x),x)<\varepsilon $ for all $%
x\in D$ and such that $f$ maps $int(D)$ homeomorphically onto $int(D).$

Let $\beta _{n}=f(\alpha _{n}).$ If $U$ is a component of $D\backslash (\cup
\alpha _{n})$ then $diam(f(U))<12\varepsilon +2\varepsilon .$ At this stage
it is likely that $P\backslash (\cup \partial \beta _{n})\neq \emptyset .$
In this case we merely add more spanning arcs to the collection $\beta
_{1},\beta _{2},..$ connecting any remaining points of $P.$
\end{proof}

\begin{definition}
Suppose $U\subset S^{2}$ is open and simply connected. Recall a \textbf{%
closed crosscut }$\gamma $ is a closed nontrivial topological arc such that $%
\gamma \subset \overline{U}$ and $int(\gamma )\subset U$ and $\partial
\gamma \subset \partial U.$ By a loop of \textbf{concatenated crosscuts }$%
\gamma _{1},\gamma _{2},..\gamma _{n}$ we mean each $\gamma _{i}$ is a
closed crosscut of $U$ and $int(\gamma _{i})\cap int(\gamma _{j})=\emptyset $
if $i\neq j$ and $\cup \gamma _{i}$ is a simple closed curve.
\end{definition}

Suppose $V\subset R^{2}$ is open, bounded, and simply connected and $\delta
>0.$ Let $T_{1}^{\delta }.,T_{2}^{\delta },...$ be a tiling of $R^{2}$ by
squares of sidelength $\delta .$ Let $A_{\delta }\subset V$ be a maximal
closed topological disk consisting of the union of finitely many closed
tiles. For small $\delta $ it is apparent that each point of $\partial A$
can be connected to $\partial V$ within $V$ by a crosscut of diameter at
most $2\delta $ (since otherwise we could attach another tile to $A$).
Consequently we have the following Lemma which will be obvious to the reader
familiar with prime ends.

\begin{lemma}
\label{cancat}Suppose $X\subset R^{2}$ is a nonseparating continuum and $%
\alpha $ is a crosscut of $R^{2}\backslash X$ and $V$ is the bounded
component of $R^{2}\backslash (X\cup \alpha ).$ Suppose $\varepsilon >0.$
There exist finitely many crosscuts $\beta _{1},\beta _{2},...$ of $V$ such
that $\alpha \cup (\cup \beta _{i})$ is a simple closed curve and $%
diam(\beta _{i})<\varepsilon .$
\end{lemma}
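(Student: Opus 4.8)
The plan is to reduce the statement to the tiling construction described just above the Lemma, applied to the bounded simply connected domain cut off by $\alpha$, together with a small amount of prime-end bookkeeping so that $\alpha$ itself becomes part of the curve. First, note that $S^{2}\backslash X$ is an open connected simply connected subset of $S^{2}$: a domain in $S^{2}$ is simply connected exactly when its complement is connected, and $X$ is a connected compactum. Since $\alpha $ is compact it misses $\infty $, so $\alpha $ is a crosscut of $S^{2}\backslash X$ and cuts it into two simply connected subdomains; exactly one of them, namely $V$, misses $\infty $, and being a component of the open set $R^{2}\backslash (X\cup \alpha )$ it satisfies $\overline{V}\subset V\cup X\cup \alpha $, hence is bounded. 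Thus $V$ is a bounded open simply connected planar domain with $\partial V\subset X\cup \alpha $, the set $int(\alpha )$ is a free boundary arc of $V$ (around each of its points $S^{2}\backslash X$ contains a round disk meeting $\alpha $ in one crossing arc, one side of which lies in $V$), and the endpoints $p,q$ of $\alpha $ lie in $X$ and are accessible from $V$ by arcs of arbitrarily small diameter running inside the collar of $int(\alpha )$. It therefore suffices to produce a finite chain $\beta _{1},\dots ,\beta _{n}$ of crosscuts of $V$ with $diam(\beta _{i})<\varepsilon $, with pairwise disjoint interiors, with $\beta _{i}\cap \beta _{i+1}$ a single common endpoint in $X$, with $\beta _{i}\cap int(\alpha )=\emptyset $, and with the two free ends of the chain at $p$ and $q$: then $\beta _{1}\cup \dots \cup \beta _{n}$ is an embedded arc in $\overline{V}$ from $q$ to $p$ meeting $\alpha $ only in $\{p,q\}$, so $\alpha \cup (\cup \beta _{i})$ is a simple closed curve.

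Next I would fix $\delta >0$ with $6\delta <\varepsilon $ and small, and invoke the construction preceding the Lemma: tiling $R^{2}$ by closed squares of sidelength $\delta $, let $A=A_{\delta }\subset V$ be a maximal closed topological disk that is a finite union of closed tiles (it exists since $V$ is bounded). Then $\partial A$ is a polygonal Jordan curve in $V$, and every $x\in \partial A$ is joined to $\partial V$ by an arc $w_{x}$ of diameter at most $2\delta $ whose only point not in $V$ is its endpoint on $\partial V$. Subdivide $\partial A$ cyclically into consecutive subarcs of diameter $<\delta $ with vertices $x_{0},x_{1},\dots ,x_{N}=x_{0}$. For $\delta $ small $A$ fills the collar of $int(\alpha )$ well enough that the indices $j$ with $w_{x_{j}}$ landing on $int(\alpha )$ (choosing such whiskers to run straight across the collar) form a single cyclic block; on the complementary block every $w_{x_{j}}$ lands at a point $z_{j}\in X$. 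Deleting the $\alpha $-facing block and replacing the whiskers at its two ends by short arcs in $V$ to $p$ and to $q$ (accessibility within small neighborhoods), reindex so the surviving vertices are $x_{0},\dots ,x_{n}$ with $z_{0}=p$, $z_{n}=q$ and $z_{1},\dots ,z_{n-1}\in X$.

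Now I would set $\beta _{i}$ to run from $z_{i-1}$ inward along $w_{x_{i-1}}$ to near $x_{i-1}$, along the subarc of $\partial A$ to near $x_{i}$, and outward along $w_{x_{i}}$ to $z_{i}$ (using the short arcs to $p,q$ in place of the outermost two whiskers). By construction $diam(\beta _{i})\le 2\delta +\delta +2\delta <\varepsilon $, each $\beta _{i}$ has its endpoints in $X\cup \{p,q\}$, and $int(\beta _{i})$ is built from $\partial A$ and whiskers. Each whisker $w_{x_{i}}$ is used by exactly the two consecutive arcs $\beta _{i},\beta _{i+1}$ and the subarcs of the polygon $\partial A$ overlap only at common vertices, so the family $\{\beta _{i}\}$ has precisely the structure --- overlaps occurring only along initial or final segments issuing from the shared points $z_{i}$ --- handled by the perturbation arguments of \secref{alg} (compare the proofs of \thmref{shrtarcs} and \thmref{perturb}). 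Arbitrarily small PL perturbations fixing the points $z_{i}$ then replace $\{\beta _{i}\}$ by PL crosscuts of $V$ that are pairwise disjoint apart from shared endpoints, have interiors in $V$, miss $int(\alpha )$, and still satisfy the diameter bound; this produces the required chain, and hence the simple closed curve $\alpha \cup (\cup \beta _{i})$.

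The main obstacle is the middle step: showing that, for $\delta $ small, the whiskers organize $\partial A$ into an arc genuinely ``facing'' the free boundary arc $int(\alpha )$ and a complementary arc whose whiskers all land on $X$, with the ends of the first arc controllably close to $p$ and $q$. This is the assertion that a free boundary arc can be separated from the rest of $\partial V$ by a finite null-chain of crosscuts --- exactly the content that is ``obvious to the reader familiar with prime ends.'' By contrast the closing perturbation is routine given \secref{alg}, and the reduction in the first step is standard plane topology.
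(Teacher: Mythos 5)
Your proposal follows the same route the paper gestures at: the paper offers no formal proof of Lemma~\ref{cancat}, instead describing the tiling construction ($A_{\delta}$, a maximal union of closed $\delta$-tiles inside $V$, with short ``whiskers'' from $\partial A_{\delta}$ to $\partial V$) and declaring the Lemma ``obvious to the reader familiar with prime ends.'' Your write-up correctly fills in what the paper leaves implicit, and in particular you are right to isolate, as the one genuinely nontrivial step, the claim that for small $\delta$ the whiskers landing on $int(\alpha)$ form a single cyclic block and the remaining whiskers land on $X$, with the two block ends accessible near $p$ and $q$ --- that is exactly the prime-end content being waved at. One small point to make explicit before invoking the perturbation machinery of \secref{alg}: your $\beta_{i}$'s overlap only along initial/final segments provided the whiskers $w_{x_{j}}$ are pairwise disjoint (or cross only consecutive neighbors); this can be arranged since $V$ is simply connected (e.g.\ by choosing the whiskers one at a time in $V$ minus the previously chosen ones, or by working in the prime-end compactification where they become disjoint radial arcs in an annulus), but it should be stated, as otherwise two non-adjacent $\beta_{i}$'s could cross transversely and the ``only tree-like overlaps'' hypothesis of the perturbation lemmas would fail.
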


Given a cellular continuum $X\subset R^{2}$ and applying Lemma \ref{cancat}
recursively (applied to $\varepsilon _{n}=\frac{1}{2^{n}}$) we can
manufacture a sequence of closed topological disks $...D_{3}\subset
D_{2}\subset D_{1}\subset R^{2}$ such that 1) $X=\cap _{n=1}^{\infty }D_{n}$
and 2) each component $V\subset D_{n}\backslash D_{n+1}$ is an open planar
set such that $\partial V$ is a loop of finitely many crosscuts (of $%
R^{2}\backslash X$) $\gamma _{1},\gamma _{2},....$ such that $\gamma
_{1}\subset \partial D_{n},$ and $diam(\gamma _{1})<\varepsilon _{n},$ and $%
\gamma _{2}\cup \gamma _{3}..\subset \partial D_{n+1},$ and $diam(\gamma
_{i})<\varepsilon _{n+1}$ if $i\geq 2.$

By Lemma \ref{ethinlem}, given the disks $D_{1},D_{2},...$ and $\varepsilon
>0$ notice there exists $N$ so that if $n\geq N$ then $D_{n}\backslash
D_{n+1}$ has $\varepsilon $ thin (components).

Moreover by construction for each component $V\subset D_{n}\backslash
D_{n+1} $, $\partial V$ is decorated by an $\varepsilon _{n}$ dense finite
subset (the endpoints of the crosscuts thus far selected) and hence we may
apply Theorem \ref{markedchop} to $V$ to obtain the following Theorem.

\begin{theorem}
\label{main2}Suppose $X\subset R^{2}$ is a nonseparating continuum. Then
there exists a null sequence of crosscuts $\{\beta _{n}\}$ such that $%
int(\beta _{n})\subset R^{2}\backslash X$ and $\partial \beta _{n}\subset X$
and such that $int(\beta _{n})\cap int(\beta _{m})=\emptyset $ if $m\neq n$
and such that the components $\{U_{n}\}$ of $R^{2}\backslash (X\cup (\cup
\beta _{n}))$ form a null sequence of open sets (with simple closed curve
boundaries), and for each $U_{n},$ the simple closed curve $\partial U_{n}$
is the finite union of concatenated arcs $\beta _{n}$.
\end{theorem}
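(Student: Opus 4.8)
The plan is to paste together two families of crosscuts: the crosscuts $\gamma_{i}$ produced by the recursive application of Lemma~\ref{cancat} recalled just above the statement (which carve $R^{2}\backslash X$ into the nested disks $\cdots\subset D_{2}\subset D_{1}$ with $\bigcap_{n}D_{n}=X$), and, inside each complementary ``annular'' layer, the extra spanning arcs supplied by Theorem~\ref{markedchop}. It is cleanest to think of everything in $S^{2}=R^{2}\cup\{\infty\}$, so that $S^{2}\backslash X$ is an open disk exhausted by the sets $S^{2}\backslash D_{n}$ and the outermost region is honest.

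First I would simply record the output of the recursive construction preceding the statement: closed topological disks $\cdots\subset D_{2}\subset D_{1}$ with $X=\bigcap_{n}D_{n}$ such that every component $V$ of $D_{n}\backslash D_{n+1}$ is an open Jordan disk whose boundary $\partial V$ is a loop of finitely many concatenated crosscuts of $R^{2}\backslash X$ --- one of which lies on $\partial D_{n}$ and has diameter $<\varepsilon_{n}:=2^{-n}$, the rest of which lie on $\partial D_{n+1}$ and have diameter $<\varepsilon_{n+1}$. Thus the finite set $P_{V}\subset X$ of endpoints of these crosscuts is $\varepsilon_{n}$-dense in the simple closed curve $\partial V$, and there are only finitely many such $V$ at each level $n$ (finitely many crosscuts are introduced at each of the finitely many recursion steps needed to reach level $n$). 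By Lemma~\ref{ethinlem}, $\delta_{n}:=\inf\{\delta>0:\text{every component of }D_{n}\backslash D_{n+1}\text{ is }\delta\text{-thin}\}$ tends to $0$; put $\eta_{n}:=\max(\delta_{n},\varepsilon_{n})+\varepsilon_{n}$, so that $\eta_{n}\to 0$.

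Next, for each $n$ and each component $V$ of $D_{n}\backslash D_{n+1}$, the disk $V$ is $\eta_{n}$-thin and $P_{V}$ is $\eta_{n}$-dense in $\partial V$, so Theorem~\ref{markedchop} cuts $V$ by finitely many spanning arcs, with pairwise disjoint interiors and with endpoint set containing $P_{V}$, into open Jordan disks each of diameter $<14\eta_{n}$. Each such spanning arc $\beta$ satisfies $int(\beta)\subset int(V)\subset R^{2}\backslash X$ and $\partial\beta\subset P_{V}\subset X$, hence is itself a crosscut of $R^{2}\backslash X$; moreover consecutive points of its endpoint set are joined along $\partial V$ by a single $\gamma_{i}$, so the boundary of each of the resulting Jordan disks is a finite concatenation of crosscuts already in hand. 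Now collect all the $\gamma_{i}$ (over all levels and components, each counted once) together with all these spanning arcs into a single sequence $\{\beta_{n}\}$. The interiors are pairwise disjoint: distinct $\gamma_{i}$ have disjoint interiors by Lemma~\ref{cancat}, and a spanning arc of a layer $V$ has its interior inside $int(V)$, which is disjoint from every other layer, from $D_{n+1}$, and from $\partial D_{n}\backslash P_{V}$, hence from every other arc. It is a null sequence: each level contributes only finitely many arcs, the $\gamma_{i}$ at level $n$ have diameter $<\varepsilon_{n}\to 0$, and a level-$n$ spanning arc lies in the closure of one of the Jordan disks of diameter $<14\eta_{n}\to 0$. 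Finally the components of $R^{2}\backslash(X\cup\bigcup_{n}\beta_{n})$ are exactly the Jordan disks produced at the various levels by Theorem~\ref{markedchop} --- a null sequence of open Jordan disks, only finitely many belonging to any fixed level --- and each of their boundaries is a finite concatenation of the $\beta_{n}$ by the preceding sentence.

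The step I expect to cost the most is this last bookkeeping rather than any single estimate: one must be sure that the two families of crosscuts, assembled across infinitely many nested layers, really do have pairwise disjoint interiors and really do leave only a null sequence of Jordan-disk components, since nothing a priori forbids a deep spanning arc from accidentally meeting a shallow $\gamma_{i}$. It is precisely the containments $int(V)\subset R^{2}\backslash X$ and $int(V)\cap D_{n+1}=\emptyset$, together with the choice of $\eta_{n}$ dominating both the thinness scale of Lemma~\ref{ethinlem} and the decoration scale $\varepsilon_{n}$, that rule this out; everything else is a routine assembly of the already-proved Lemma~\ref{cancat}, Lemma~\ref{ethinlem}, and Theorem~\ref{markedchop}.
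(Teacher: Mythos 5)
Your proposal is correct and essentially reproduces the paper's own argument: the paper proves Theorem~\ref{main2} by exactly the construction you describe (nested disks $D_{n}$ from Lemma~\ref{cancat} applied recursively at scale $\varepsilon_{n}=2^{-n}$, the thinness observation of Lemma~\ref{ethinlem}, and then Theorem~\ref{markedchop} applied layer by layer to the components of $D_{n}\backslash D_{n+1}$ using the crosscut endpoints as the $\varepsilon_{n}$-dense marked set). You have merely spelled out, a bit more explicitly than the paper does, the bookkeeping showing that the combined family of crosscuts has pairwise disjoint interiors and forms a null sequence, and that the resulting complementary regions are a null sequence of Jordan disks.
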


\section{Main results}

\begin{theorem}
\label{main1}Suppose $X\subset R^{2}$ is compact and $U=R^{2}\backslash X$
is connected. Suppose $X$ has at least two components. Then there exists a
sequence of arcs $\alpha _{1},\alpha _{2},...$ such that $l(\alpha
_{i})\rightarrow 0,$ and $Z=X\cup $ $(\cup _{i=1}^{\infty }\alpha _{i})$ is
cellular, and $int(\alpha _{i})\subset U,$ and $\alpha _{i}$ connects
distinct components of $X,$ and $int(\alpha _{i})\cap int(\alpha
_{j})=\emptyset $ if $i\neq j.$
\end{theorem}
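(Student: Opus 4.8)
The plan is to squeeze $X$ between the nested finite unions of pairwise disjoint PL disks $S_n$ of Theorem~\ref{Sn}, to fill each shell $S_n\setminus int(S_{n+1})$ one stage at a time with the very short arcs produced by Theorems~\ref{shrtarcs} and \ref{main3}, and then to recover the arcs $\alpha_i$ of the statement as the maximal concatenations of the stagewise arcs. First I would fix $S_1\supset S_2\supset\cdots$ with $X=\bigcap_n S_n$, $N(S_n,S_{n+1})<10^{-n}$ and $M(S_n,S_{n+1})\to 0$, and observe that every component of $S_n$ contains a component of $S_{n+1}$: a component missing $X$ would generate a decreasing sequence of nonempty compacta with empty intersection, while $X\subset S_{n+1}$ by construction.

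Next I would build recursively, for $n\ge 1$, a finite set $Y_n\subset\partial S_n$ (with $Y_1=\emptyset$) and a finite family $T_n$ of pairwise disjoint PL arcs. Given $Y_n$, Theorem~\ref{shrtarcs} yields pairwise disjoint arcs --- the \emph{$\gamma$-arcs of stage $n$} --- joining the points of $Y_n$ to $S_{n+1}$, of length $<10^{-n}$, with interiors in $int(S_n)\setminus S_{n+1}$ and remaining in a single component of $S_n$. Inside each component $P$ of $S_n$ I would then apply Theorem~\ref{main3}, with the $E_i$ the components of $S_{n+1}\cap P$ and the $\gamma_j$ the stage-$n$ $\gamma$-arcs meeting $P$ (a negligible perturbation makes $l(\gamma_j)<N(P,\cup E_i)$), obtaining pairwise disjoint PL arcs --- the \emph{$\alpha$-arcs of stage $n$} --- in $P$, joining distinct components of $S_{n+1}\cap P$, of length $<2\bigl(M(S_n,S_{n+1})+N(S_n,S_{n+1})\bigr)+o(1)$, meeting the $\gamma_j$ only at endpoints, and such that the union $C_P$ of $S_{n+1}\cap P$ with all stage-$n$ arcs in $P$ is cellular. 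Let $T_n$ collect all stage-$n$ arcs and let $Y_{n+1}$ be the finitely many endpoints of the stage-$n$ arcs lying on $\partial S_{n+1}$, chosen in general position so that each is an endpoint of exactly one stage-$n$ arc. Note $\max\{l(\beta):\beta\in T_n\}\to 0$ and that $C_P\supset Y_n\cap\partial P$.

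Set $W_n=S_{n+1}\cup T_1\cup\cdots\cup T_n$. One checks that $W_{n+1}\subset W_n$ and $\bigcap_n W_n=X\cup\bigcup_k T_k=:Z$. The crux --- which I expect to be the main obstacle --- is to prove each $W_n$ cellular, by induction. For $n=1$, $W_1=S_2\cup T_1=C_{S_1}$ is cellular by Theorem~\ref{main3}. For the step, passing from $W_{n-1}$ to $W_n$ merely replaces, inside each component $P$ of $S_n$, the disk $P$ by the cellular continuum $C_P\subset P$; since $P$ meets $\overline{W_{n-1}\setminus P}$ only in $Y_n\cap\partial P\subset C_P$, the set $W_n$ stays a continuum, and $\mathbb{R}^2\setminus W_n=(\mathbb{R}^2\setminus W_{n-1})\cup\bigcup_P(P\setminus C_P)$ stays connected: because $C_P$ is nonseparating, no component of $P\setminus C_P$ is a bounded complementary component of $C_P$, so each such component has closure meeting $\partial P\setminus Y_n$, near which $\mathbb{R}^2\setminus W_{n-1}$ accumulates. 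A compact connected planar set with connected complement has simply connected complement, so $W_n$ is cellular; hence $Z=\bigcap_n W_n$, a decreasing intersection of nonseparating continua, is a nonseparating continuum, as $\mathbb{R}^2\setminus Z=\bigcup_n(\mathbb{R}^2\setminus W_n)$ is an increasing union of connected, simply connected open sets. The delicacy is that the inductive step is not a single ``attach an arc between two cellular continua'' move but a ``shrink each disk to an interior nonseparating continuum'' move, so one must check that opening the regions $P\setminus C_P$ does not disconnect the complement --- this is exactly where nonseparation of $C_P$, not mere connectivity, is used.

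Finally I would extract the $\alpha_i$. Regarding the arcs of $\bigcup_k T_k$ as edges of a graph on their endpoints, each $v\in Y_{n+1}$ ($n\ge1$) is an endpoint of exactly one stage-$n$ arc and receives exactly one stage-$(n+1)$ $\gamma$-arc, so $v$ has degree $2$; since the stage-$n$ $\gamma$-arcs run from $\partial S_n$ to $\partial S_{n+1}$, the stage-$n$ $\alpha$-arcs join components of $S_{n+1}$, and $Y_1=\emptyset$, every component of the graph is a single $\alpha$-arc, at some stage $m$, together with the two $\gamma$-chains descending from its endpoints. The $k$-th arc of such a chain has length $<10^{-(m+k)}$, so the chain is a rectifiable ray converging to a point of $X$, and its closure is an arc; hence the closure $A$ of the whole component is an arc with both endpoints in $X$, and $A\setminus X$ is disjoint from $X$ (every stagewise arc avoids $X\subset S_{j+1}$, its non-endpoint points lying in $S_j\setminus S_{j+1}$ and its endpoints on $\partial S_j\cup\partial S_{j+1}$, which also miss $X$). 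Since the root $\alpha$-arc joins distinct components $E_1\ne E_2$ of $S_{m+1}$, the two descending chains remain in $E_1$ and in $E_2$ respectively, and any component of $X$ lies in a single component of $S_{m+1}$, so the two endpoints of $A$ lie in distinct components of $X$ (and at least one such $A$ exists, since $X$ has at least two components forces some $S_n$, and hence, tracing back to the disk $S_1$, some $\alpha$-arc, to exist). Enumerating these arcs as $\{\alpha_i\}$ --- one per $\alpha$-arc, hence finitely many per stage, so $l(\alpha_i)\to 0$ --- they have pairwise disjoint interiors contained in $U=\mathbb{R}^2\setminus X$, and $X\cup\bigcup_i\alpha_i=X\cup\bigcup_k T_k=Z$ is cellular, completing the proof.
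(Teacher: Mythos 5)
Your proof is correct and follows essentially the same route as the paper: nest $X$ in the PL disks $S_n$ of Theorem~\ref{Sn}, fill each shell with the short $\gamma$-arcs of Theorem~\ref{shrtarcs} and the connecting arcs of Theorem~\ref{main3}, take the nested intersection of the resulting cellular sets, and extract the arcs $\alpha_i$ as the closures of the maximal concatenations (a root $\beta$-arc with two descending $\gamma$-chains of summable length). The one place you go beyond the paper is worth noting: the paper asserts ``by construction'' that the intermediate sets $W_n=S_{n+1}\cup T_1\cup\cdots\cup T_n$ are cellular, whereas you actually prove it by the inductive ``replace each component $P$ of $S_n$ by the cellular $C_P\subset P$'' step, checking that every component of $P\setminus C_P$ has boundary meeting $\partial P$ away from the finite contact set so the complement stays connected --- this is a genuine gap you have filled rather than merely restated.
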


\begin{proof}
Let $\delta _{n}=\frac{1}{10^{n}}.$

Apply Theorem \ref{Sn} to obtain a sequence of closed sets $S_{n}\subset
R^{2}$ such that $S_{n}$ is the union of finitely many pairwise disjoint
closed PL topological disks, such that $S_{n+1}\subset int(S_{n})$, such
that $X=\cap _{n=1}^{\infty }S_{n}$ and such that $N(S_{n},S_{n+1})<\delta
_{n}$ and such that $\lim_{n\rightarrow \infty }M(S_{n},S_{n+1})=0.$ We
require that $S_{1}$ is a connected PL disk.

Name a sequence $\{\varepsilon _{n}\}$ such that $\varepsilon
_{n}>2M(S_{n},S_{n+1})+2\delta _{n}$ and such that $\varepsilon
_{n}\rightarrow 0.$

Let $Y_{1}=\emptyset $ and proceed recursively as follows.

Suppose $Y_{n}=\{y_{n}^{1},...y_{n}^{k_{n}}\}\subset \partial S_{n}$ is
finite. Apply Theorem \ref{shrtarcs} to obtain pairwise disjoint PL closed
arcs $\{\gamma _{n}^{i}\}\subset S_{n}$ such that $int(\gamma
_{n}^{i})\subset int(S_{n})\backslash S_{n+1}$ and $\gamma _{n}^{i}$
connects $y_{n}^{i}$ to $S_{n+1}$ and $l(\gamma _{i})<\delta _{n}.$

Recall each component $P_{i}^{n}\subset S_{n}$ is a PL disk, and the
subspace $S_{n+1}\cup _{i,j}\{\gamma _{j}^{i}\}\subset S_{n}$ is the union
of pairwise disjoint PL cellular sets.

Now apply Theorem \ref{main3} to the data at hand as follows.

Apply the algorithm in section \ref{alg} to the data $(S_{n},S_{n+1}\cup
_{i,j}\{\gamma _{j}^{i}\})$ creating a finite sequence of arcs $\{\alpha
_{n}^{i}\}\subset S_{n}$ with the following properties: $l(\alpha
_{n}^{i})\leq 2M(S_{n},S_{n+1}),$ and $\alpha _{n}^{i}\cap \alpha _{n}^{j}$
is connected, and if $i\neq j$ then $\alpha _{n}^{i}\cap \alpha _{n}^{j}$
does not disconnect $\alpha _{n}^{i},$ and $\alpha _{n}^{i}$ connects
distinct components of $S_{n+1}\cup _{i,j}\{\gamma _{j}^{k}\}.$

Next apply the constructions in section \ref{perturbarc}, replacing the arcs 
$\alpha _{n}^{i}$ with pairwise disjoint arcs $\beta _{n}^{i}\subset S_{n}$ $%
\ $(replacing the notation $\alpha _{n}^{\ast \ast \ast i}$) with all of the
following properties:

$l(\beta _{n}^{i})<2M(S_{n},S_{n+1})+2\delta _{n},$ $\beta _{n}^{i}\cap
\gamma _{k}^{j}=\emptyset ,$ and $\beta _{n}^{i}$ connects the same two
distinct components of $S_{n+1}\cup _{i,j}\{\gamma _{j}^{k}\}$ ( as $\alpha
_{n}^{i}$), and $int(\beta _{n}^{i})\subset int(S_{n}),$ and each component
of $S_{n+1}\cup (\cup _{i}\beta _{n}^{i})$ is cellular.

Now let $Y_{n+1}=\partial S_{n+1}\cap ((\cup _{i}\beta _{n}^{i})\cup (\cup
_{i}\gamma _{n}^{i})$ and repeat the construction.

(It is allowed at a given stage $n$, that $\cup _{i}\beta _{n}^{i}=\emptyset
,$ (in the event that $S_{n}$ and $S_{n+1}$ have the same number of
components, and in fact this behavior is inevitable if $X$ has finitely many
components)).

To understand the components of $Z\backslash X,$ by construction at each
stage $n,$ new arcs $\{\beta _{n}^{i}\}$ are created such that $l(\beta
_{n}^{i})<\varepsilon _{n}.$ In subsequent stages a given end of $\beta
_{n}^{i}$ will be lengthened by attaching concatenated arcs $\gamma
_{n+1}^{(n,i)}\cup \gamma _{n+2}^{(n,i)}...$ (and on the other end of $\beta
_{n}^{i}$ we have concatenated arcs $\gamma _{n+1}^{\ast (n,i)}\cup \gamma
_{n+2}^{\ast (n,i)}...$ ).

Thus, $\beta _{n}^{i}\cup \gamma _{n+1}^{(n,i)}\cup \gamma
_{n+2}^{(n,i)}...\cup (\gamma _{n+1}^{\ast (n,i)}\cup \gamma _{n+2}^{\ast
(n,i)}...)$ is an open arc with Euclidean pathlength \ less than $%
2\varepsilon _{n}+\Sigma _{k=n}^{\infty }\frac{2}{10^{k}}<2\varepsilon _{n}+%
\frac{1}{2^{n}}.$

Define $\kappa _{n}^{i}=\overline{\beta _{n}^{i}\cup \gamma
_{n+1}^{(n,i)}\cup \gamma _{n+2}^{(n,i)}...\cup (\gamma _{n+1}^{\ast
(n,i)}\cup \gamma _{n+2}^{\ast (n,i)}...)}.$ Note the ends of the open arc $%
\beta _{n}^{i}\cup \gamma _{n+1}^{(n,i)}\cup \gamma _{n+2}^{(n,i)}...\cup
(\gamma _{n+1}^{\ast (n,i)}\cup \gamma _{n+2}^{\ast (n,i)}...)$ converge
since this open arc has finite geometric length.

Moreover, since the extended ends of $\beta _{n}^{i}$ will be forever
trapped in distinct components of $S_{n},$ $\kappa _{n}^{i}$ is a closed
arc, (as a opposed to a simple closed curve).

Define $Z=X\cup (\cup _{n.i}\kappa _{n}^{i}).$

Recall $\gamma _{n}^{j}\subset S_{n},$ and $M(S_{n},S_{n+1})\rightarrow 0$
and $X=\cap S_{n}.$ Thus $\partial \kappa _{n}^{i}\subset X.$

By construction, $Z$ is the nested intersection of the cellular sets $%
S_{n+1}\cup ((\cup _{k\leq n}\beta _{k}^{i})\cup (\cup _{k\leq n}\gamma
_{k}^{i})).$ Consequently $Z$ is cellular.

By construction $int(\kappa _{n}^{i})\cap int(\kappa _{m}^{j})=\emptyset $
(if $n\neq m$ or $i\neq j$).

Reindex the arcs doubly indexed sequence $\{\kappa _{n}^{i}\}$ as $\alpha
_{1},\alpha _{2},...$ to obtain the desired arcs.
\end{proof}

If $X\subset S^{2}$ is compact then $S^{2}\backslash X$ has at most
countably many components $U_{1},U_{2},..$, and we apply Theorem \ref{main1}
to each component $U_{n}\subset S^{2}\backslash X$ to obtain the following
result applicable to all planar compacta.

\begin{corollary}
\label{maincor}Suppose $X\subset R^{2}$ is compact. Then there exists a
sequence of closed arcs $\alpha _{1},\alpha _{2},..$ such that $int(\alpha
_{i})\subset R^{2}\backslash X,$ and $\partial \alpha _{i}\subset X$ and $%
\lim_{i\rightarrow \infty }l(\alpha _{i})=\emptyset $, and $int(\alpha
_{i})\cap int(\alpha _{j})=\emptyset $ if $i\neq j,$ and if $Y=X\cup (\cup
\alpha _{i})$) and if $U$ is a component of $R^{2}\cup \{\infty \}\backslash
Y$ then $U$ is simply connected, and each component of $R^{2}\backslash X$
contains precisely one component of $R^{2}\backslash Y.$
\end{corollary}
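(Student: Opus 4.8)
The plan is to pass to the sphere $S^{2}=R^{2}\cup \{\infty \}$ and treat each component of $S^{2}\backslash X$ in isolation. Since $X$ is compact, $S^{2}\backslash X$ is open and hence has at most countably many components $U_{1},U_{2},\dots $; exactly one of them, say $U_{0}$, contains $\infty $, and it corresponds to the unbounded component of $R^{2}\backslash X$. For each $m$ put $X_{m}=S^{2}\backslash U_{m}$, a nonempty compactum with $S^{2}\backslash X_{m}=U_{m}$ connected. If $U_{m}$ is simply connected I attach no arc inside $U_{m}$. Otherwise, using the standard fact that a connected open subset of $S^{2}$ is simply connected if and only if its complement is connected, $X_{m}$ has at least two components, so Theorem \ref{main1} applies to $X_{m}$ once $X_{m}$ has been moved into $R^{2}$.

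For $m=0$ we already have $X_{0}\subset R^{2}$, and $R^{2}\backslash X_{0}=U_{0}\backslash \{\infty \}$ is connected, so Theorem \ref{main1} directly produces arcs $\alpha _{1}^{0},\alpha _{2}^{0},\dots $ with $l(\alpha _{i}^{0})\rightarrow 0$, with pairwise disjoint interiors contained in $U_{0}$, with endpoints in $X_{0}$, and with $Z_{0}:=X_{0}\cup (\cup _{i}\alpha _{i}^{0})$ cellular. For a non--simply connected bounded $U_{m}$ I pick $q_{m}\in U_{m}$ and let $\phi _{m}(z)=1/(z-q_{m})$, a homeomorphism of $S^{2}$ sending $q_{m}$ to $\infty $; then $\phi _{m}(X_{m})=S^{2}\backslash \phi _{m}(U_{m})$ is a compactum in $R^{2}$ with at least two components and connected complement, Theorem \ref{main1} supplies arcs $\widetilde{\alpha }_{i}$, and I set $\alpha _{i}^{m}=\phi _{m}^{-1}(\widetilde{\alpha }_{i})$ and $Z_{m}=X_{m}\cup (\cup _{i}\alpha _{i}^{m})$. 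In either case the arcs have pairwise disjoint interiors contained in $U_{m}$; since these interiors lie in $U_{m}$ while the endpoints lie in $S^{2}\backslash U_{m}$, the endpoints actually lie in $\overline{U_{m}}\backslash U_{m}=\partial U_{m}\subset X$ (and similarly the endpoints of $\widetilde{\alpha }_{i}$ lie on $\partial \phi _{m}(U_{m})$, away from $\phi _{m}(\infty )$, so the transport is legitimate). Cellularity of $Z_{m}$ is preserved: it is a property of the $S^{2}$--complement of a continuum, hence invariant under the homeomorphism $\phi _{m}^{-1}$, and reinstating the single point $\infty $ into the connected simply connected set $R^{2}\backslash Z_{0}$ leaves it connected and simply connected because $Z_{0}$ is a continuum.

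Now set $Y=X\cup (\cup _{m}\cup _{i}\alpha _{i}^{m})$, the inner union being empty whenever $U_{m}$ is simply connected. The arc interiors lie in the pairwise disjoint sets $U_{m}$, so all interiors are pairwise disjoint, and $\partial \alpha _{i}^{m}\subset X$. For the null--sequence requirement, observe that the bounded components $U_{m}$ are disjoint open subsets of a fixed bounded region, so for each $\varepsilon >0$ only finitely many of them contain a round disk of radius $\varepsilon $; if $U_{m}$ contains no such disk, then every arc inserted in $U_{m}$ — each of which merely connects the distinct pieces of $S^{2}\backslash U_{m}$ and, by the construction behind Theorem \ref{main1}, may be taken `across' the thin set $U_{m}$ — has length $O(\varepsilon )$, while each of the finitely many exceptional $U_{m}$ (and $U_{0}$) contributes a family $\{\alpha _{i}^{m}\}_{i}$ that is itself null; reindexing $\{\alpha _{i}^{m}\}$ as a single sequence $\alpha _{1},\alpha _{2},\dots $ therefore gives $l(\alpha _{i})\rightarrow 0$. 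Finally $Y\cap U_{m}=\cup _{i}int(\alpha _{i}^{m})$, so $S^{2}\backslash Y=\bigsqcup _{m}(U_{m}\backslash \cup _{i}\alpha _{i}^{m})=\bigsqcup _{m}(S^{2}\backslash Z_{m})$, a disjoint union of open, connected, simply connected sets (a simply connected $U_{m}$ contributing itself). These are therefore exactly the components of $S^{2}\backslash Y$, each simply connected; intersecting with $R^{2}$ shows that each component of $R^{2}\backslash X$ contains precisely one component of $R^{2}\backslash Y$, which is the assertion of the Corollary.

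The only real obstacles are the point--at--infinity bookkeeping and the uniform length bound. For the first, one checks that ``cellular'' both transfers across $\phi _{m}$ and survives the reinsertion of $\infty $ (using that cellular sets are continua), and that the arcs $\widetilde{\alpha }_{i}$ stay away from $\phi _{m}(\infty )$ (their interiors are in $\phi _{m}(U_{m})$ and their endpoints, being limits of complement points, lie on $\partial \phi _{m}(U_{m})$, whereas $\phi _{m}(\infty )$ is interior to $\phi _{m}(X_{m})$). For the second, the assertion ``$U_{m}$ contains no $\varepsilon $--disk $\Rightarrow $ the inserted arcs have length $O(\varepsilon )$'' is where one must look inside the proof of Theorem \ref{main1}: the arcs produced there at stage $n$ have length controlled by the path--length Hausdorff distance $M(S_{n},S_{n+1})$ between successive disk--systems approximating $X_{m}$, and for a thin $U_{m}$ this quantity is of the order of the width of $U_{m}$. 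With these points settled the rest is a routine unwinding of Theorem \ref{main1}.
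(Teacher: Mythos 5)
Your decomposition of $S^{2}\backslash X$ into components $U_{m}$, the application of Theorem~\ref{main1} to $X_{m}=S^{2}\backslash U_{m}$ after a M\"obius change of coordinates moving a point of $U_{m}$ to $\infty $, and the assembly of the resulting arcs is precisely the route the paper sketches (in a single sentence preceding the corollary). Your bookkeeping of where the endpoints land, why cellularity of $Z_{m}$ transfers across $\phi _{m}$ and survives reinsertion of $\infty $, and the component count for $S^{2}\backslash Y$ are all correct and spell out what the paper leaves tacit.

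The one step that does not close as written is the claim that the reindexed sequence satisfies $l(\alpha _{i})\rightarrow 0$. You argue that if $U_{m}$ contains no $\varepsilon $--disk then every arc inserted in $U_{m}$ has length $O(\varepsilon )$ ``by the construction behind Theorem~\ref{main1}''; but those arcs are built as near--geodesics in the transformed picture, and short length there does not transfer back by a uniform factor. Indeed $|(\phi _{m}^{-1})'|\leq \mathrm{diam}(\overline{U_{m}})^{2}$ on $\phi _{m}(\overline{U_{m}})$, while the initial lengths $l(\widetilde{\alpha }_{i})$ coming out of Theorem~\ref{main1} scale like the diameter of the transformed compactum, roughly the reciprocal of the inradius of $U_{m}$; when the inradius is much smaller than $\mathrm{diam}(U_{m})^{2}$ the product estimate fails. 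What you do get for free, since every $\alpha _{i}^{m}\subset \overline{U_{m}}$ and $\mathrm{diam}(\overline{U_{m}})\rightarrow 0$, is $\mathrm{diam}(\alpha _{i})\rightarrow 0$, and that is the ``null sequence'' property actually invoked when Corollary~\ref{maincor} is used in Lemma~\ref{arcfix} and Theorem~\ref{main4}. If the literal statement about Euclidean arclength is wanted, one has to either choose the M\"obius normalization and the initial disk system $S_{1}$ in Theorem~\ref{main1} with care (e.g.\ ensuring $S_{1}$ sits at a bounded multiple of the transformed compactum's diameter), or replace each $\alpha _{i}^{m}$ after the fact by a shorter arc with the same endpoints inside $U_{m}$ while preserving disjointness. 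This gap, to be fair, is present in the paper's own gloss, which gives no proof at all.
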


\begin{theorem}
\label{main4}Suppose $X\subset R^{2}$ is a Peano continuum and $Y\subset
R^{2}$ is any set. Suppose $p\in X$ and $f,g:X\rightarrow Y$ are maps such $%
f(p)=g(p).$ Then $f$ and $g$ are homotopic if and only if $f_{\ast }=g_{\ast
}$ (and $f_{\ast }:\pi _{1}(X,p)\rightarrow \pi _{1}(Y,f(p))$ denotes the
induced homomorphism between fundamental groups.)
\end{theorem}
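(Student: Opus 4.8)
The forward implication is immediate, so the plan concerns the converse: assuming $f_{\ast }=g_{\ast }$, construct a homotopy from $f$ to $g$. I would begin with two reductions. By Lemma~\ref{stand} I may replace $X$ by a standard Peano continuum of which the original $X$ is a strong deformation retract, composing $f$ and $g$ with the retraction; a homotopy of the composites restricts to one of $f,g$, so we may assume $X$ is standard. Next, let $U$ be the union of the bounded components $U_{n}$ of $R^{2}\backslash X$ for which $f_{\partial U_{n}}$ is inessential in $Y$; since $f_{\ast }=g_{\ast }$ this is the same family for $g$. By Lemmas~\ref{circlekill} and~\ref{ezkill} the set $Z=X\cup U$ is again a standard Peano continuum, $f$ and $g$ extend over $Z$ so that every isolated boundary circle of $Fr(Z)$ has essential image, and by Lemma~\ref{fg} the extensions still induce equal homomorphisms; a homotopy of the extensions restricts to one of $f,g$. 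After renaming, we may assume $X$ is standard and $f_{C_{n}}$ (equivalently $g_{C_{n}}$) is essential in $Y$ for every isolated boundary circle $C_{n}\subset Fr(X)$.

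Second, I would build a ``skeleton'' for $X$. The frontier $Fr(X)$ is a planar compactum, generally with non--locally--connected components, so \cite{BMO} does not apply and Theorem~\ref{main1} (equivalently Corollary~\ref{maincor}) is invoked instead: applied to $Fr(X)$ it attaches a null sequence of arcs with pairwise disjoint interiors and endpoints in $Fr(X)$ making $Fr(X)\cup(\text{arcs})$ have simply connected complementary domains. Because $X$ is standard, the complementary domains of $X$ already have connected frontier --- each bounded one a circle $C_{n}$, the unbounded one the locally connected $\partial U_{\infty }$, which is a continuum by Carath\'eodory --- so no arcs get placed in them and every new arc has interior in $int(X)$. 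Subdividing the resulting simply connected pieces of $int(X)$ into small Jordan disks using Theorem~\ref{markedchop} and Lemma~\ref{cancat} (Theorem~\ref{main2}) and adjoining the crosscuts, I obtain a subcontinuum $W\subset X$ with $Fr(X)\subset W$, of the form $W=Fr(X)\cup\bigcup_{n}\beta _{n}$ for a null sequence of arcs $\beta _{n}$ with pairwise disjoint interiors in $int(X)$, such that $W$ is connected (forced by connectedness of $X$) and $X\backslash W$ is a null sequence of open Jordan disks; I also take $p\notin\bigcup_{n}\beta _{n}$.

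Third, I would homotope $f$ so that it agrees with $g$ on $W$, and then conclude. Since $X$ is standard, $diam(C_{n})\to 0$, hence $diam(f(C_{n}))$ and $diam(g(C_{n}))\to 0$; since $f_{C_{n}}\simeq g_{C_{n}}$ are essential, Lemma~\ref{shape} gives $diam(f(C_{n})\cup g(C_{n}))\to 0$. Choosing $x_{n}\in C_{n}$ yields $d(f(x_{n}),g(x_{n}))\to 0$, so joining $f(x_{n})$ to $g(x_{n})$ by a null sequence of paths inside the Peano continuum $im(f)\subset Y$, Theorem~\ref{pointadjust} lets me homotope $f$ so that $f(x_{n})=g(x_{n})$; then Lemma~\ref{frfix} (via Lemma~\ref{retract}) yields $h\simeq f$ with $h=g$ on $\bigcup_{n}C_{n}$, hence, by continuity, on $\overline{\bigcup_{n}C_{n}}$, which already contains the limit set of the circles. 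It remains to match $h$ and $g$ on the leftover part of $Fr(X)$ (essentially the locally connected outer boundary $\partial U_{\infty }$), after which $h=g$ on all of $Fr(X)$; Lemma~\ref{arcfix} then upgrades this to $h'\simeq f$ with $h'=g$ on $W$, and since $X\backslash W$ is a null sequence of open Jordan disks, Lemma~\ref{cherry} (applied to the continuum $W$) gives $h'\simeq g$, so $f\simeq g$. The main obstacle I anticipate is precisely this matching of $f$ and $g$ on $Fr(X)$: the boundary circles and their (possibly wild) limit set are handled by the shape estimate and Theorem~\ref{pointadjust}, but getting the agreement on the remainder of $Fr(X)$ and then transporting it across the arc skeleton --- available only because Theorem~\ref{main1} can cellularize the wild compactum $Fr(X)$ by a null sequence of arcs --- is where the real work lies.
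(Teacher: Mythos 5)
Your outline tracks the paper's proof essentially step for step: reduce to a standard Peano continuum via Lemma~\ref{stand}; fill in bounded complementary domains with inessential boundary image using Lemmas~\ref{ezkill}, \ref{circlekill}, \ref{fg}; use Lemma~\ref{shape} with Theorem~\ref{pointadjust} and then Lemma~\ref{frfix} to make $f$ agree with $g$ on $Fr(X)$; build the arc skeleton out of $Fr(X)$ using Corollary~\ref{maincor}, Theorem~\ref{main2}, Theorem~\ref{markedchop}, and Lemma~\ref{cancat}; transport the agreement across the skeleton with Lemma~\ref{arcfix}; finish with Lemma~\ref{cherry}. The only ordering difference (building the skeleton before matching on $Fr(X)$, where the paper matches first) is immaterial.

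One point worth commenting on: you single out the outer boundary $\partial U_{\infty}$ as a ``leftover part'' of $Fr(X)$ requiring separate treatment, and you are right that there is a genuine subtlety there which the paper passes over silently. The definition of ``standard'' makes $\partial U_{\infty}$ an isolated boundary circle too, but the reduction in Lemma~\ref{circlekill} only fills in \emph{bounded} complementary domains, so after the reduction $f_{\partial U_{\infty}}$ may still be inessential. In that case Lemma~\ref{shape} gives no diameter control and the hypothesis ``$f(C_{n})$ is essential'' in Lemma~\ref{frfix} fails for that one circle. This is easily patched, however: since $f_{\ast}=g_{\ast}$, the based loops $f_{\partial U_{\infty}}$ and $g_{\partial U_{\infty}}$ are path homotopic in $Y$, and a single collar homotopy near $\partial U_{\infty}$ of fixed (not small) size makes them agree there; no diameter estimate is needed because there is only one such circle, unlike the null sequence of bounded circles where smallness is what makes the union of infinitely many modifications continuous. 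So your instinct that this needs separate handling is correct, but the ``real work'' there is actually light; the paper's null-sequence machinery is only essential for the bounded circles.
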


\begin{proof}
If $f\symbol{126}g$ it is immediate that $f_{\ast }=g_{\ast }.$ Conversely
suppose $f_{\ast }=g_{\ast }.$ Our goal is to prove that $f$ and $g$ are
homotopic, and throughout the proof we will replace $f$ by a homotopic map $%
f^{\symbol{94}}$ with nicer properties, and for convenience we will then
rename $f=f^{\symbol{94}}.$

We reduce to the assumption that $X$ is standard as follows.

Apply Lemma \ref{stand} to obtain a standard Peano continuum $Z$ and a
retraction $r:Z\rightarrow X$ such that $r$ is homotopic to $id_{Z}.$
Moreover $(fr)_{\ast }=(gf)_{\ast }$ and we hope to prove $fr$ is homotopic
to $gr.$ If we find such a homotopy from $Z$ then we can restrict to $X$ to
obtain a homotopy from $f$ to $g.$ Thus, renaming $Z$ as $X,$ we have
reduced to the special case that $X$ is a standard Peano continuum.

Suppose $X\subset R^{2}$ is a standard Peano continuum. Let $U\subset
R^{2}\backslash X$ denote the union of those bounded components $%
\{U_{n}\}\subset R^{2}\backslash X$ such that $f_{\partial U_{n}}$ is
inessential in $Y.$ Let $Z=X\cup U.$ By Lemma \ref{circlekill} $Z$ is a
standard Peano continuum and there exists a map $F:Z\rightarrow Y$ such that 
$F_{X}=f$ ( in particular $F_{C_{n}}$ is essential for all isolated circles $%
C_{n}\subset Fr(Z)$)

In similar fashion we can construct a map $G:Z\rightarrow Y$ such that $%
G_{X}=g.$

By Lemma \ref{fg} $F_{\ast }=G_{\ast }$ and if we can prove $F$ is homotopic
to $G$ then, restricting the homotopy to $X,$ we will have a homotopy from $%
f $ to $g.$

Thus, once again renaming $Z=X,$ we have reduced the problem to the further
specialized assumption that $f(C)$ is essential in $Y$ for all isolated
boundary circles $C\subset Fr(X)$ (and $X\subset R^{2}$ is a standard Peano
continuum).

Let $C_{1},C_{2}...$ denote the isolated circles of $Fr(X)$ and for each $n$
select a basepoint $x_{n}\in C_{n}.$

Let $y_{n}=g(x_{n}).$ By uniform continuity $diam(g(C_{k}))\rightarrow 0$
and $diam(f(C_{k}))\rightarrow 0.$ Since $f_{\ast }=g_{\ast }$ we know the
unbased loops $f_{C_{k}}$ and $g_{C_{k}}$ are essential and freely homotopic
in $Y.$

Thus it follows from Lemma \ref{shape} that $d(x_{n},y_{n})\rightarrow 0.$
Now apply Theorem \ref{pointadjust} to obtain a map $f^{\symbol{94}%
}:X\rightarrow Y$ such that $f^{\symbol{94}}$ is homotopic to $f$ and such
that $f^{\symbol{94}}(x_{n})=g^{\symbol{94}}(x_{n}).$

Thus, wolog we may rename $f=f^{\symbol{94}}$ and we assume henceforth that $%
f(x_{n})=g(x_{n}).$

Now apply Lemma \ref{frfix} to obtain a map $f^{\symbol{94}}:X\rightarrow Y$
such that $f_{Fr(X)}^{\symbol{94}}=g_{Fr(X)}$ and such that $f$ and $f^{%
\symbol{94}}$ are homotopic.

Once again, we may rename $f^{\symbol{94}}=f$ and henceforth assume $%
f_{Fr(X)}=g_{Fr(X)}.$

Apply Corollary \ref{maincor} to obtain a null sequence of arcs $\beta
_{1},\beta _{2},...$ such that $int(\beta _{n})\subset int(X)$ and $\partial
\beta _{n}\subset Fr(X)$ , and $int(\beta _{n})\cap int(\beta
_{m})=\emptyset $ if $n\neq m,$ and so that the components of $%
int(X)\backslash (\beta _{1}\cup \beta _{2}...)$ are simply connected open
planar sets and so that the endpoints of $\beta _{n}$ belong to distinct
components of $Fr(X).$

Let $Z=Fr(X)\cup \beta _{1}\cup \beta _{2}.....$

For each bounded component $U_{n}\subset R^{2}\backslash Z$, apply Theorem 
\ref{main2} to obtain a null sequence of crosscuts $\alpha _{1}^{n},\alpha
_{2}^{n},..$ so that $int(\alpha _{k}^{n})\subset U_{n}$ and $\partial
\alpha _{k}^{n}\subset \partial U_{n}$ and such that $int(\alpha
_{k}^{n})\cap int(\alpha _{m}^{n})=\emptyset $ if $k\neq n$ and such that
the components $\{V_{n}\}$ of $\overline{U_{n}}\backslash (\alpha
_{1}^{n},\alpha _{2}^{n},..)$ form a null sequence of open sets (with simple
closed curve boundaries), and for each $B_{n},$ the simple closed curve $%
\partial V_{n}$ is the finite union of concatenated arcs $\alpha _{n}^{k}$.

Now let $\alpha _{1},\alpha _{2},...$ denote the arcs $\cup _{m}\{\beta
_{m}\}\cup _{n,k}(\alpha _{k}^{n}).$ By construction $\partial \alpha
_{n}\subset Fr(X)$ and $int(\alpha _{n})\cap int(\alpha _{m})=\emptyset $ if 
$m\neq n$ and $\{\alpha _{n}\}$ is a null sequence of arcs. Let $Z=X\cup
\alpha _{1}\cup \alpha _{2}....$

Apply Lemma \ref{arcfix} to obtain a map $f^{\symbol{94}}:X\rightarrow Y$
such that $f^{\symbol{94}}$ is homotopic to $f$ and $f_{Z}^{\symbol{94}%
}=g_{Z}.$ As before rename $f=f^{\symbol{94}}.$

It follows now from Lemma \ref{cherry} that $f$ and $g$ are homotopic.
\end{proof}

Consequently we obtain the following version of Whitehead's Theorem for
planar Peano continua.

\begin{corollary}
\label{White}If $X,Y\subset R^{2}$ are Peano continua, then a map $%
f:X\rightarrow Y$ is a homotopy equivalence if there exists $p\in X$ and $%
q\in Y$ such that $f_{\ast }:\pi _{1}(X,p)\rightarrow \pi _{1}(Y,q)$ is an
isomorphism and such that $f_{\ast }^{-1}:\pi _{1}(Y,q)\rightarrow \pi
_{1}(X,p)$ is induced by a map.
\end{corollary}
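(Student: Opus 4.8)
The plan is to deduce the Corollary from \thmref{main4} by the standard ``two out of three'' argument for homotopy equivalences; the only real point requiring attention is the bookkeeping with basepoints.

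First I would unwind the hypothesis. Since $f_{\ast }:\pi _{1}(X,p)\rightarrow \pi _{1}(Y,q)$ is defined we have $f(p)=q$; and since $f_{\ast }^{-1}:\pi _{1}(Y,q)\rightarrow \pi _{1}(X,p)$ is induced by a map, there is a map $g:Y\rightarrow X$ with $g(q)=p$ and $g_{\ast }=f_{\ast }^{-1}$. (If one only knows this up to a change of basepoint isomorphism, conjugate $g$ by a path in $X$ from $g(q)$ to $p$; this replaces $g$ by a homotopic map and affects nothing below.) Thus $f:(X,p)\rightarrow (Y,q)$ and $g:(Y,q)\rightarrow (X,p)$ are based maps between planar Peano continua.

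Next I would apply \thmref{main4} twice. Consider $g\circ f:X\rightarrow X$. Then $(g\circ f)(p)=g(q)=p=\mathrm{id}_{X}(p)$, and $(g\circ f)_{\ast }=g_{\ast }\circ f_{\ast }=f_{\ast }^{-1}\circ f_{\ast }=\mathrm{id}_{\pi _{1}(X,p)}=(\mathrm{id}_{X})_{\ast }$. Since $X$ is a planar Peano continuum, \thmref{main4} (with the roles of both ``$X$'' and ``$Y$'' played by $X$) gives $g\circ f\simeq \mathrm{id}_{X}$. Symmetrically, $f\circ g:Y\rightarrow Y$ satisfies $(f\circ g)(q)=f(p)=q=\mathrm{id}_{Y}(q)$ and $(f\circ g)_{\ast }=f_{\ast }\circ g_{\ast }=f_{\ast }\circ f_{\ast }^{-1}=\mathrm{id}_{\pi _{1}(Y,q)}=(\mathrm{id}_{Y})_{\ast }$, so \thmref{main4} gives $f\circ g\simeq \mathrm{id}_{Y}$. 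Hence $g$ is a homotopy inverse of $f$, and $f$ is a homotopy equivalence.

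There is no substantial obstacle beyond \thmref{main4} itself, which is the engine of the whole paper: the deduction is entirely formal. The one step that genuinely needs a word of care is the normalization $g(q)=p$, since \thmref{main4} is stated for based maps that agree on the basepoint and whose induced homomorphisms coincide; once $g$ is chosen so that $g(q)=p$, both composites $g\circ f$ and $f\circ g$ literally fix their basepoints and induce the identity, and the rest is immediate.
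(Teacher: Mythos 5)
Your proof is correct and follows essentially the same route as the paper's: take a based map $g$ inducing $f_{\ast}^{-1}$, observe that $g\circ f$ and $f\circ g$ fix the basepoints and induce the identity on $\pi_1$, and apply Theorem~\ref{main4} twice to conclude both composites are homotopic to the identity. The paper's version is terser; your added remark about normalizing $g(q)=p$ is a reasonable bit of bookkeeping but does not change the argument.
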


\begin{proof}
Let $g:(Y,q)\rightarrow (X,p)$ be such that $(fg)_{\ast }=f_{\ast }g_{\ast
}=id_{\pi _{1}(Y,q)}$ and $(gf)_{\ast }=g_{\ast }f_{\ast }=id_{\pi
_{1}(X,p)}.$ Then by Theorem \ref{main4} $gf$ and $fg$ are homotopic to the
respective identities. Hence $f$ is a homotopy equivalence.
\end{proof}

\end{document}